\date{\today}
\newcommand{\Z}{{\mathbb Z}}
\newcommand{\R}{{\mathbb R}}
\newcommand{\C}{{\mathbb C}}
\newcommand{\N}{{\mathbb N}}
\newcommand{\T}{{\mathbb T}}
\newtheorem{theorem}{Theorem}[section]
\newtheorem{remark}[theorem]{Remark}
\newtheorem{lemma}[theorem]{Lemma}
\newtheorem{prop}[theorem]{Proposition}
\newtheorem{coro}[theorem]{Corollary}
\def\N{{\mathbb N}}
\theoremstyle{remark}
\newtheorem{rem}[theorem]{Remark}
\theoremstyle{definition}
\newcommand{\abs}[1]{\left\vert#1\right\vert}
\newcommand{\set}[1]{\left\{#1\right\}}
\newcommand{\eqdef}{\overset{\mathrm{def}}=}
\begin{document}

\title[The Fibonacci Hamiltonian]{The Fibonacci Hamiltonian}

\author[D.\ Damanik]{David Damanik}

\address{Department of Mathematics, Rice University, Houston, TX~77005, USA}

\email{damanik@rice.edu}

\thanks{D.\ D.\ was supported in part by a Simons Fellowship and NSF grant DMS--1067988.}

\author[A.\ Gorodetski]{Anton Gorodetski}

\address{Department of Mathematics, University of California, Irvine, CA~92697, USA}

\email{asgor@math.uci.edu}

\thanks{A.\ G.\ was supported in part by NSF grants DMS-1301515 and 
IIS-1018433.}

\author[W.\ Yessen]{William Yessen}

\address{Department of Mathematics, Rice University, Houston, TX~77005, USA}

\email{yessen@rice.edu}

\thanks{W.\ Y.\ was supported by the NSF Mathematical Sciences Postdoctoral Research Fellowship, DMS-1304287}

\begin{abstract}
We consider the Fibonacci Hamiltonian, the central model in the study of electronic properties of one-dimensional quasicrystals, and provide a detailed description of its spectrum and spectral characteristics (namely, the optimal H\"older exponent of the integrated density of states, the dimension of the density of states measure, the dimension of the spectrum, and the upper transport exponent) for all values of the coupling constant (in contrast to all previous quantitative results, which could be established only in the regime of small or large coupling).

In particular, we show that the spectrum of this operator is a dynamically defined Cantor set and that the density of states measure is exact-dimensional; this implies that all standard fractal dimensions coincide in each case. We show that all the gaps of the spectrum allowed by the gap labeling theorem are open for all values of the coupling constant. Also, we establish strict inequalities between the four spectral characteristics in question, and provide the exact large coupling asymptotics of the dimension of the density of states measure (for the other three quantities, the large coupling asymptotics were known before).

A crucial ingredient of the paper is the relation between spectral properties of the Fibonacci Hamiltonian and dynamical properties of the Fibonacci trace map (such as dimensional characteristics of the non-wandering hyperbolic set and its measure of maximal entropy as well as other equilibrium measures, topological entropy, multipliers of periodic orbits). We establish exact identities relating the spectral and dynamical quantities, and show the connection between the spectral quantities and the thermodynamic pressure function.
\end{abstract}

\maketitle

\tableofcontents

\section{Introduction}\label{s.intro}

In this paper we study the Fibonacci Hamiltonian. Along with the almost Mathieu operator, this particular operator is the most heavily studied Schr\"odinger operator, with dozens of mathematics papers and hundreds of physics papers devoted to it. There are several reasons for this extensive interest in the spectral properties of the Fibonacci Hamiltonian. The first and perhaps most important reason is that this operator is a central model in mathematical physics. Namely it is relevant in the study of electronic properties of quasicrystals. Quasicrystals are materials that were first discovered by Shechtman in 1982, and this discovery led to a paradigm shift in materials science. In diffraction experiments they produce patterns consisting of sharp bright spots, the so-called Bragg peaks, while at the same time these diffraction patterns display symmetries that conclusively prove that the arrangement of atoms in the sample cannot be periodic. This came as a surprise as it had been believed that Bragg peaks can only be observed in the diffraction of materials for which the arrangement of atoms is periodic. It therefore took the scientific community a while until this discovery was properly digested and accepted, and it was finally published in the 1984 paper \cite{SBGC84}. Shechtman has received numerous honors and distinctions for this discovery, including the 2011 Nobel Prize in Chemistry.

Since the 1980's mathematicians have studied appropriate models of quasicrystals. Naturally, the choice of these models is guided by the distinctive property of real-life quasicrystals, namely that of having a pure point diffraction which in turn displays symmetries that rule out periodicity. The central examples of the commonly accepted mathematical quasicrystal models are the Fibonacci tilings or sequences in one dimension and the Penrose tilings in two dimensions. Indeed, these examples belong to all classes of models that are typically considered in their respective dimension. In particular, they may be generated both by inflation and by a cut-and-project scheme.

Mathematical quasicrystal models are studied from many perspectives, including dynamical systems, harmonic analysis, spectral theory, discrete geometry, combinatorics, and algebra; compare \cite{BG13, BM00, M97}. The study of electronic or quantum transport in quasicrystals, which is the perspective we take in this paper, naturally leads to the consideration of Schr\"odinger operators with potentials modeling a quasicrystalline environment. Choosing the environment given by the Fibonacci tiling or sequence, this leads to the discrete one-dimensional Schr\"odinger operator
\begin{equation}\label{e.fib}
[H_{\lambda,\omega} u](n) = u(n+1) + u(n-1) + \lambda \chi_{[1-\alpha,1)}(n \alpha + \omega \!\!\!\! \mod 1) u(n),
\end{equation}
acting in $\ell^2(\Z)$, where $\lambda > 0$ is the coupling constant, $\alpha = \frac{\sqrt{5}-1}{2}$ is the frequency, and $\omega \in \T = \R / \Z$ is the phase. In particular, $\alpha$ is the inverse of the golden ratio
\begin{equation}\label{e.goldenratio}
\varphi = \frac{\sqrt{5} + 1}{2}.
\end{equation}
Alternatively, the potential can be generated by the Fibonacci substitution $a \mapsto ab$, $b \mapsto a$; compare \cite{D00, D07, DEG}. The operator family \eqref{e.fib} is what is usually called the Fibonacci Hamiltonian. It was proposed and initially studied by Kohmoto et al.\ \cite{KKT} and by Ostlund et al.\ \cite{OPRSS}, prior to the publication of \cite{SBGC84}, as a quasi-periodic model that can be solved exactly by renormalization group techniques. The relevance to quasicrystals was only established and discussed later. The first papers on the model in the mathematics literature belong to Casdagli \cite{Cas} and S\"ut\H{o} \cite{S87}.

The second reason for the extensive interest in this operator family is that it has exciting spectral properties. Independently of the relevance of the model to physics, the Fibonacci Hamiltonian also serves as a paradigm for many spectral phenomena that had been considered exotic prior to the 1980's. For example, it persistently displays Cantor spectrum, zero-measure spectrum, purely singular continuous spectral measures, and anomalous transport. That these properties are even possible for a physically relevant model was surprising prior to discovery, and the fact that these properties can be rigorously established only adds to the importance of the model. Specifically, it is often difficult to answer questions about the spectrum and the spectral type for a given Schr\"odinger operator with an aperiodic and non-decaying potential (periodic and decaying potentials are well understood; compare, e.g., \cite{SimonSzego} and \cite{DK07}). In many cases one rather resorts to statements about members in families of operators that hold generically or with probability one. The operator families corresponding to the Fibonacci and almost Mathieu cases are special in that quite detailed and difficult questions about these operators can be answered for all members of the respective family. Establishing this has been the objective of many publications in the past three decades focusing on these two operator families; see, for example, the surveys \cite{D00, D07, D15, DEG, J07, JM14}.

In this paper we show that the spectrum of the Fibonacci Hamiltonian is a dynamically defined Cantor set and that the density of states measure is exact-dimensional; this implies that all standard fractal dimensions coincide in each case. We show that all the gaps of the spectrum allowed by the gap labeling theorem are open for all values of the coupling constant. Also, we consider the optimal H\"older exponent of the integrated density of states, the dimension of the density of states measure, the dimension of the spectrum, and the upper transport exponent, establish strict inequalities between them, and provide the exact large coupling asymptotics of the dimension of the density of states measure (for the other three quantities, the large coupling asymptotics were known before). We also provide the explicit relations between these spectral characteristics and the dynamical properties of the Fibonacci trace map (such as dimensional characteristics of the non-wandering hyperbolic set and its measure of maximal entropy as well as other equilibrium measures, topological entropy, multipliers of periodic orbits). We establish exact identities relating the spectral and dynamical quantities, and show the connection between the spectral quantities and the thermodynamic pressure function. Our results not just improve but complete our understanding of many spectral characteristics and properties of Fibonacci Hamiltonian. In the rest of the introduction we provide the exact statement of the results and discuss them in the context of previously known facts.

\subsection{The Spectrum of the Fibonacci Hamiltonian}

The spectrum of the Fibonacci Hamiltonian $H_{\lambda,\omega}$ is independent of $\omega$ and may therefore by denoted by $\Sigma_\lambda$. This follows from strong operator convergence and the minimality of an irrational rotation of the circle. It was shown by S\"ut\H{o} in \cite{S89} that $\Sigma_\lambda$ is a Cantor set of zero Lebesgue measure for every $\lambda > 0$. The zero-measure property in turn rules out any absolutely continuous spectrum for $H_{\lambda,\omega}$. Complementing this, Damanik and Lenz showed that $H_{\lambda,\omega}$ has no eigenvalues \cite{DL99a}, and hence for all parameter values, all spectral measures are purely singular continuous. This answers the basic qualitative spectral questions about this operator family.

Our first result shows that actually $\Sigma_\lambda$ is a {\it dynamically defined}\footnote{Sometimes the term ``regular'' is also used.} Cantor set, that is, it belongs to a special and heavily studied class of Cantor sets that have strong self-similarity properties (see \cite{Palis1993} for the formal definition and a detailed discussion of the properties of dynamically defined Cantor sets).

\begin{theorem}\label{t.ddcs}
For every $\lambda > 0$, $\Sigma_\lambda$ is a dynamically defined Cantor set. In particular, for every $E \in \Sigma_\lambda$ and every $\varepsilon > 0$, we have
$$
\dim_H \left( (E-\varepsilon, E+\varepsilon) \cap \Sigma_\lambda \right) = \dim_B \left( (E-\varepsilon, E+\varepsilon) \cap \Sigma_\lambda \right) = \dim_H \Sigma_\lambda = \dim_B \Sigma_\lambda.
$$
\end{theorem}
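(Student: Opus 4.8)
The plan is to reduce the statement to the theory of dynamically defined Cantor sets via the \emph{Fibonacci trace map}. Let $M_k(E)$ denote the transfer matrices of $H_{\lambda,\omega}$ over blocks of Fibonacci length, and set $x_k(E) = \tfrac12 \tr M_k(E)$. S\"ut\H{o}'s renormalization identity \cite{S87} gives the recursion $x_{k+1} = 2 x_k x_{k-1} - x_{k-2}$, so the orbit $(x_k(E))_k$ is governed by the polynomial map $T(x,y,z) = (2xy - z,\, x,\, y)$ on $\R^3$, which preserves the Fricke--Vogt quantity $I(x,y,z) = x^2 + y^2 + z^2 - 2xyz - 1$. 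The initial data for energy $E$ trace out a line $\ell_\lambda(E) = \bigl( \tfrac{E-\lambda}{2},\, \tfrac{E}{2},\, 1 \bigr)$ lying on the level surface $S_\lambda = I^{-1}(\lambda^2/4)$, and by S\"ut\H{o}'s characterization $E \in \Sigma_\lambda$ if and only if the forward $T$-orbit of $\ell_\lambda(E)$ is bounded. Thus $\Sigma_\lambda$ is realized, up to the affine reparametrization $E \mapsto \ell_\lambda(E)$, as the set of points on a smooth curve whose forward orbit under $T|_{S_\lambda}$ does not escape to infinity.

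The second step is to identify the relevant invariant set. The non-wandering set $\Omega_\lambda$ of $T$ restricted to $S_\lambda$ is a compact, locally maximal, \emph{hyperbolic} set for every $\lambda > 0$; moreover a forward orbit on $S_\lambda$ is bounded precisely when the point lies in the stable set $W^s(\Omega_\lambda)$ (its $\omega$-limit set lands in $\Omega_\lambda$, and hyperbolicity plus local maximality then place the point in $W^s$). Consequently $\Sigma_\lambda = \{ E : \ell_\lambda(E) \in W^s(\Omega_\lambda) \}$. Hyperbolicity of $\Omega_\lambda$ in the small-coupling and large-coupling regimes is classical (Casdagli \cite{Cas}; Damanik and Gorodetski), but its validity for \emph{all} $\lambda > 0$ — in particular in the intermediate regime, where neither perturbative scheme applies — is the crux of the matter and the reason the present theorem holds in full generality; I would take this hyperbolicity as the main structural input.

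Granting it, the argument concludes as follows. First one checks that the curve $\ell_\lambda$ meets the stable lamination $W^s(\Omega_\lambda)$ transversally, with no tangency; this has to be carried out uniformly in $\lambda$ and is the remaining technical point. Since $\Omega_\lambda$ is a (sub-)horseshoe it carries a Markov partition, and intersecting the local stable leaves of its elements with $\ell_\lambda$ cuts $\Sigma_\lambda$ into finitely many relatively clopen pieces; the map induced on these pieces by an iterate of $T$, via holonomy along stable manifolds (using transversality), is a $C^{1+}$ expanding Markov map. This exhibits $\Sigma_\lambda$ as a dynamically defined Cantor set in the precise sense of \cite{Palis1993}. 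Finally, such Cantor sets are homogeneous: Hausdorff and box-counting dimensions agree, and the dimension of any nonempty relatively open subset equals that of the whole set — the latter because some cylinder contained in $(E-\varepsilon, E+\varepsilon)\cap\Sigma_\lambda$ maps onto all of $\Sigma_\lambda$ under iteration of the expanding map and the cylinder structure has bounded distortion. Combining this with the elementary identity $\dim_B A = \dim_B \overline{A}$ yields the displayed chain of equalities. The principal obstacle, to reiterate, is establishing the hyperbolicity of $\Omega_\lambda$ for coupling constants of arbitrary size.
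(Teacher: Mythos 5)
Your reduction to the trace-map picture is the right one, and the endgame (Markov partition for the horseshoe, holonomy along stable leaves, $C^{1+\epsilon}$ expanding induced map on $\ell_\lambda\cap W^s(\Lambda_\lambda)$, bounded distortion giving $\dim_H=\dim_B$ locally and globally) matches the argument behind \cite[Corollary~2]{DG09}, which is what the paper invokes. However, you have misidentified which ingredient is new and hard. You repeatedly single out hyperbolicity of the non-wandering set $\Lambda_\lambda$ for \emph{all} $\lambda>0$ as ``the crux of the matter'' and ``the principal obstacle,'' while calling transversality of $\ell_\lambda$ to $W^s(\Lambda_\lambda)$ merely ``the remaining technical point.'' This is backwards. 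Hyperbolicity of $\Lambda_\lambda$ for every $\lambda>0$ is \emph{not} established in this paper; it is a prior result, due to Cantat's work \cite{Can} (together with \cite{Cas, DG09}), which the authors cite and take as given. The genuine contribution of the paper, and the reason Theorem~\ref{t.ddcs} can finally be stated for all $\lambda>0$, is Theorem~\ref{t.transversal}, the absence of tangencies between $\ell_\lambda$ and the stable lamination in the intermediate coupling regime.

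That transversality statement is far from a routine check: the paper's Section~\ref{sec:trans} proves it by assuming a first tangency appears at some $\lambda_0$, complexifying the surfaces $S_\lambda$, using Hurwitz's theorem to control how zeros (intersection points) bifurcate as $\lambda\nearrow\lambda_0$, ruling out tangencies of order $>2$ by producing earlier tangencies (Lemma~\ref{lem:prelim-2}), and then ruling out quadratic tangencies by a spectral argument: the only way a quadratic tangency could unfold without producing a tangency for $\lambda<\lambda_0$ is if it created an isolated gap endpoint, and this is excluded using the fact that all gap boundary points correspond to intersections with stable manifolds of \emph{two distinct} periodic points (via the gap-opening analysis of \cite{DG11}). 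None of this is a perturbative or cone-field argument. Your proposal does not engage with any of it. If you were to actually carry out the plan you sketched, you would find yourself re-deriving Cantat's theorem (unnecessary) while having no strategy for the transversality statement on which the whole reduction hinges.
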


Here, $\dim_H(S)$ (resp., $\dim_B(S)$) denotes the Hausdorff (resp., box counting) dimension of the Borel set $S \subset \R$. Stating the identities above contains the implicit assertion that the box counting dimension of the set in question exists.

This result was previously known for $\lambda \ge 16$ \cite{Cas} and $\lambda > 0$ sufficiently small \cite{DG09}. Knowing that the spectrum is a dynamically defined Cantor set not only establishes the equality of all standard fractal dimensions of the set (and shows that this common dimension is bounded away from zero and one), it also serves as the starting point for further studies. For example, higher-dimensional separable models may be considered and their spectra turn out to be given by the sum of the one-dimensional spectra; compare, for example, \cite{DG11, DGS}. This leads to a study of sums of dynamically defined Cantor sets, which is an extensively investigated problem about which much is known (see, e.g., \cite{HS, MY} and references therein).

\subsection{Transport Exponents}

Given that the operator $H_{\lambda,\omega}$ has purely singular continuous spectrum for all parameter values, the RAGE Theorem (see, e.g., \cite[Theorem~XI.115]{RS3}) suggests that when studying the Schr\"odinger time evolution for this Schr\"odinger operator, that is, $e^{-itH_{\lambda,\omega}} \psi$ for some initial state $\psi \in \ell^2(\Z)$, one should consider time-averaged quantities. For simplicity, let us consider initial states of the form $\delta_n$, $n \in \Z$. Since a translation in space simply results in an adjustment of the phase, we may without loss of generality focus on the particular case $\psi = \delta_0$. The time-averaged spreading of $e^{-itH_{\lambda,\omega}} \delta_0$ is usually captured on a power-law scale as follows; compare, for example, \cite{DT10, L96}. For $p > 0$, consider the $p$-th moment of the position operator,
$$
\langle |X|_{\delta_0}^p \rangle (t) = \sum_{n \in \Z} |n|^p | \langle e^{-itH_{\lambda,\omega}} \delta_0 , \delta_n \rangle |^2
$$
We average in time as follows. If $f(t)$ is a function of $t > 0$ and $T > 0$ is given, we denote the time-averaged function at $T$ by $\langle f \rangle (T)$:
$$
\langle f \rangle (T) = \frac{2}{T} \int_0^{\infty} e^{-2t/T} f(t) \, dt.
$$
Then, the corresponding upper and lower transport exponents $\tilde \beta^+_{\delta_0}(p)$ and $\tilde \beta^-_{\delta_0}(p)$ are given, respectively, by
$$
\tilde \beta^+_{\delta_0}(p) = \limsup_{T \to \infty} \frac{\log \langle \langle |X|_{\delta_0}^p \rangle \rangle (T) }{p \, \log T},
$$
$$
\tilde \beta^-_{\delta_0}(p) = \liminf_{T \to \infty} \frac{\log \langle \langle |X|_{\delta_0}^p \rangle \rangle (T) }{p \, \log T}.
$$
The transport exponents $\tilde \beta^\pm_{\delta_0}(p)$ belong to $[0,1]$ and are non-decreasing in $p$ (see, e.g., \cite{DT10}), and hence the following limits exist:
\begin{align*}
\tilde \alpha_l^\pm & = \lim_{p \to 0} \tilde \beta^\pm_{\delta_0}(p), \\
\tilde \alpha_u^\pm & = \lim_{p \to \infty} \tilde \beta^\pm_{\delta_0}(p).
\end{align*}

Ballistic transport corresponds to transport exponents being equal to one, diffusive transport corresponds to the value $\frac12$, and vanishing transport exponents correspond to (some weak form of) dynamical localization. In all other cases, transport is called anomalous. The Fibonacci Hamiltonian has long been the primary candidate for a model exhibiting anomalous transport, going back at least to the work of Abe and Hiramoto \cite{AH}. Many papers have been devoted to a study of the transport properties of the Fibonacci Hamiltonian; see, for example, \cite{BLS, D98, D05, DKL, DST, DT03, DT05, DT07, DT08, JL2, kkl}. For example, it is known that all the transport exponents defined above are strictly positive for all $\lambda > 0$, $\omega \in \T$; see \cite{DKL}. On the other hand, upper bounds for all the transport exponents were shown in \cite{DT07} for $\lambda > 8$ (see also \cite{BLS} for a somewhat weaker result). The exact large coupling asymptotics of $\tilde \alpha_u^\pm$ were identified in \cite{DT08}, where is was shown that
\begin{equation}\label{e.inftyapproach2}
\lim_{\lambda \to \infty} \tilde \alpha_u^\pm \cdot \log \lambda = 2 \log \varphi,
\end{equation}
uniformly in $\omega \in \T$. In particular, the Fibonacci Hamiltonian indeed gives rise to anomalous transport for sufficiently large coupling. The behavior in the weak coupling regime was studied in \cite{DG14}, where it was shown that there is a constant $c > 0$ such that for $\lambda > 0$ sufficiently small, we have
$$
1 - c\lambda^2 \le \tilde \alpha_u^\pm \le 1,
$$
uniformly in $\omega \in \T$.

While it is of clear interest to identify the asymptotic behavior of $\tilde \alpha_u^\pm$ in the large and small coupling regimes, and in particular show that the asymptotic behavior of $\tilde \alpha_u^+$ coincides with that of $\tilde \alpha_u^-$, the following questions remain. What can we say for a given value of $\lambda$? Can we for example give an explicit expression for $\tilde \alpha_u^+$ or $\tilde \alpha_u^-$? Can we even show that $\tilde \alpha_u^+$ and $\tilde \alpha_u^-$ coincide for the given value of $\lambda$ (and $\omega$)?

We will address these questions in this paper. An explicit description of $\tilde \alpha_u^\pm$ will be given in Theorem~\ref{t.identities} stated later in this introduction (it will require the trace map formalism, which will be recalled in Subsection~\ref{ss.tracemap}). A particular consequence of the description given there is that the desired identity holds:

\begin{theorem}\label{t.equaltransportexponents}
For every $\lambda > 0$, $\tilde \alpha^+_u(\lambda)$ and $\tilde \alpha^-_u(\lambda)$ are equal and independent of $\omega \in \T$.
\end{theorem}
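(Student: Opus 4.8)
The plan is to obtain Theorem~\ref{t.equaltransportexponents} as a corollary of the explicit description of the upper transport exponents promised in Theorem~\ref{t.identities}, reducing everything to dynamical quantities attached to the Fibonacci trace map. The key point is that the trace map has a compact non-wandering hyperbolic set (the ``Fibonacci set'') on which one can do thermodynamic formalism, and the energy $E$ enters only through the position of the corresponding orbit relative to this hyperbolic set. Concretely, I would first recall the trace map renormalization: setting $x_k(E,\lambda)$ equal to half the trace of the $k$-th transfer matrix over one Fibonacci block, the triples $(x_{k-1},x_k,x_{k+1})$ evolve under the Fibonacci trace map $T(x,y,z)=(y,z,2yz-x)$, which preserves the Fricke--Vinberg invariant $I(x,y,z)=x^2+y^2+z^2-2xyz-1$, and for $E\in\Sigma_\lambda$ the corresponding orbit stays bounded; the spectrum is exactly the set of energies whose forward orbit is bounded, and this coincides with the stable lamination of the hyperbolic set on the invariant surface $I=\lambda^2/4$.

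Next I would import the known variational/thermodynamic formula for the upper transport exponents in terms of this dynamics. The step that does the real work is the bound, for the Fibonacci Hamiltonian, relating transport exponents to the growth rate of transfer matrix norms along the trace-map orbit: by the work of Damanik--Tcheremchantsev and its refinements, $\tilde\alpha_u^\pm$ is governed by how slowly $\|M_k(E,\lambda)\|$ can grow as a function of $k$, optimized over $E$, and this slowest growth rate is precisely encoded by the multipliers of periodic orbits / the pressure function of the hyperbolic set. Since the hyperbolic set, its stable/unstable distributions, and hence the pressure function and the infimum of Lyapunov exponents over the set depend only on $\lambda$ (through the surface $I=\lambda^2/4$) and not on $\omega$, the resulting closed-form expression for $\tilde\alpha_u^\pm$ is manifestly $\omega$-independent. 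The equality $\tilde\alpha_u^+ = \tilde\alpha_u^-$ then follows because the quantity obtained is a genuine limit, not merely a $\limsup$ or $\liminf$: the hyperbolicity of the Fibonacci set forces the relevant trace-map norm growth to be governed by a well-defined exponential rate (the transfer matrix cocycle over a hyperbolic base has no gap between upper and lower exponential growth rates for the optimizing energies), so the $\limsup$ and $\liminf$ in the definitions of $\tilde\beta_{\delta_0}^\pm(p)$ produce the same value after taking $p\to\infty$.

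In more detail, the chain of equalities I would assemble is: $\tilde\alpha_u^+ = \tilde\alpha_u^-$ as a consequence of an identity of the form $\tilde\alpha_u^\pm(\lambda) = $ (an explicit function of the pressure function $P_\lambda$, or equivalently of $\inf$ over periodic orbits of $\frac{2\log\varphi}{\text{(Lyapunov exponent)}}$, of the trace map restricted to the Fibonacci set), with the right-hand side a single well-defined number depending only on $\lambda$. The $\omega$-independence is then automatic from the $\omega$-independence of $\Sigma_\lambda$ and of the trace-map dynamics. I expect the main obstacle to be the first half: establishing that the transport exponent $\tilde\alpha_u^\pm$ --- originally a dynamical quantity of the Schr\"odinger time evolution $e^{-itH_{\lambda,\omega}}$ --- admits the clean dynamical-systems expression in terms of the hyperbolic set, and in particular upgrading the two-sided estimates of Damanik--Tcheremchantsev (which as stated give upper bounds for $\lambda>8$ and lower bounds from \cite{DKL}) to an exact identity valid for all $\lambda>0$. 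This requires the uniform hyperbolicity of the trace map on $\Sigma_\lambda$ for every $\lambda>0$ (which is exactly what underlies Theorem~\ref{t.ddcs}), together with a careful analysis of how the Parreau--Widom / transfer-matrix description of quantum dynamics interacts with the thermodynamic formalism on the hyperbolic set. Once that identity is in hand, Theorem~\ref{t.equaltransportexponents} is immediate.
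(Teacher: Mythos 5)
Your proposal takes essentially the same route as the paper: Theorem~\ref{t.equaltransportexponents} is deduced as an immediate corollary of the identity \eqref{e.transportexponentidentity}, whose right-hand side is a single well-defined number depending only on $\lambda$ through the trace-map dynamics on $\Lambda_\lambda$, so $\tilde\alpha_u^+=\tilde\alpha_u^-$ and both are $\omega$-independent. (One small slip in your sketch of the supporting identity: the correct formula is $\tilde\alpha_u^\pm(\lambda)=\log\varphi/\inf_{p\in Per(T_\lambda)}\mathrm{Lyap}^u(p)$ with no factor of $2$; the $2\log\varphi$ only appears in the $\lambda\to\infty$ asymptotics.)
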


The interpretation of this statement is that, for all values of the coupling constant and the phase, the fastest part of the wavepacket travels uniformly on a power-law scale. That is, there aren't two different sequences of time scales along which the ``front of the wavepacket'' moves at two different power-law rates. To the best of our knowledge this is the first time this phenomenon has been rigorously established for a model for which $\tilde \alpha^+_u(\lambda)$ and $\tilde \alpha^-_u(\lambda)$ take fractional values. The reason for this is that it is usually very difficult to identify transport exponents exactly (if they take fractional values) and hence most results only establish estimates for them.

\medskip

\subsection{Density of States Measure and Gap Labeling}

Let us recall the definition of the density of states measure and some derived quantities. By the spectral theorem, there are Borel probability measures $\mu_{\lambda,\omega}$ on $\R$ such that
$$
\langle \delta_0 , g(H_{\lambda,\omega}) \delta_0 \rangle = \int g(E) \, d\mu_{\lambda,\omega}(E)
$$
for all bounded measurable functions $g$. The \emph{density of states measure} $\nu_\lambda$ is given by the $\omega$-average of these measures with respect to Lebesgue measure, that is,
$$
\int_\T \langle \delta_0 , g(H_{\lambda,\omega}) \delta_0 \rangle \, d\omega = \int g(E) \, d\nu_\lambda(E)
$$
for all bounded measurable functions $g$. By general principles, the density of states measure is non-atomic and its topological support is $\Sigma_\lambda$. The fact that $\Sigma_\lambda$ has zero Lebesgue measure therefore implies that $\nu_\lambda$ is singular continuous for every $\lambda > 0$.
The density of states measure can also be obtained by counting the number of eigenvalues per unit volume, in a given energy region, of restrictions of the operator to finite intervals (which explains the terminology). Indeed, for any real $a < b$,
$$
\nu_\lambda(a,b) = \lim_{L \to \infty} \frac{1}{L} \# \big\{ \text{eigenvalues of } H_{\lambda,\omega}|_{[1,L]} \text{ that lie in } (a,b) \big\},
$$
uniformly in $\omega$; compare \cite{H}. Here, for definiteness, $H_{\lambda,\omega}|_{[1,L]}$ is defined with Dirichlet boundary conditions.

We will be interested in the optimal H\"older exponent $\gamma_\lambda$ of $\nu_\lambda$. That is, $\gamma_\lambda$ is the unique number in $[0,1]$ such that the following two properties hold.

\begin{enumerate}

\item For any $\gamma < \gamma_\lambda$ and any sufficiently small interval $I \subset \mathbb{R}$, we have $\nu(I) < |I|^{\gamma}$;

\item For any $\tilde \gamma > \gamma_\lambda$ and any $\varepsilon > 0$, there exists an interval $I \subset \mathbb{R}$ such that $|I| < \varepsilon$ and $\nu(I) > |I|^{\tilde \gamma}$.

\end{enumerate}

The optimal H\"older exponent of the density of states measure is studied for other popular discrete Schr\"odinger operators in numerous papers; see, for example, \cite{AJ10, B00, GS01, GS08, H09} and references therein. For the Fibonacci case in the regime of small or large coupling, it was studied in \cite{DG13}. In particular, it was shown that $\gamma_\lambda\to 1/2$ as $\lambda \to 0$ and $\gamma_\lambda\to 0$ as $\lambda \to \infty$ (the explicit rate at which it does so is recalled in Theorem~\ref{t.asymptotics} below). In all these works only estimates and asymptotics for the optimal H\"older exponent were established. In this paper, we will express the optimal H\"older exponent in the Fibonacci case explicitly, for any value of the coupling constant, in terms of dynamical quantities related to the Fibonacci trace map (the explicit formula is provided in Theorem \ref{t.identities} below). For small values of the coupling constant $\lambda$, this will allow us to give an exact formula for $\gamma_\lambda$ as a function of $\lambda$; see Corollary~\ref{c.holderexp}.

The distribution function of the density of states measure is called the integrated density of states and denoted by $N_\lambda$. Thus, for $E \in \R$, we have
$$
N_\lambda(E) = \int \chi_{(-\infty,E]} \, d\nu_\lambda = \lim_{L \to \infty} \frac{1}{L} \# \big\{ \text{eigenvalues of } H_{\lambda,\omega}|_{[1,L]} \text{ that are } \le E \big\},
$$
uniformly in $\omega$.

Since $\Sigma_\lambda$ is the topological support of $\nu_\lambda$, it follows that $N_\lambda$ is constant on each gap of $\Sigma_\lambda$, where any connected component of $\R \setminus \Sigma_\lambda$ is called a gap of $\Sigma_\lambda$. This value may be used as the label of the gap in question. The gap labeling theorem (see, e.g., \cite{B92, J86}) provides a set that is defined purely in terms of the underlying dynamical system generating the ergodic family of potentials in question (in our case this is either the irrational rotation of the circle by the (inverse of the) golden ratio, or the shift transformation on the subshift generated by the Fibonacci substitution), to which all gap labels must belong. This general gap labeling theorem specializes in the Fibonacci case to the following statement (see, e.g., \cite[Eq.~(6.7)]{BBG92}):
\begin{equation}\label{f.fibgaplabels}
\{ N_\lambda(E) : E \in \R \setminus \Sigma_\lambda \} \subseteq \{ \{ m \varphi \} : m \in \Z \} \cup \{ 1 \}
\end{equation}
for every $\lambda > 0$. Here $\{ m \varphi \}$ denotes the fractional part of $m \varphi$, that is, $\{ m \varphi \} = m \varphi - \lfloor m \varphi \rfloor$. Notice that the set of possible gap labels is indeed $\lambda$-independent and only depends on the value of $\varphi$ from the underlying circle rotation. Since $\varphi$ is irrational, the set of gap labels is dense.

In general, a dense set of possible gap labels is indicative of a Cantor spectrum and hence a common (and attractive) stronger version of proving Cantor spectrum is to show that the operator ``has all its gaps open.'' For example, the Ten Martini Problem for the almost Mathieu operator is to show Cantor spectrum, while the Dry Ten Martini Problem is to show that all labels correspond to gaps in the spectrum. The former problem has been completely solved \cite{AJ}, while the latter has not yet been completely settled (it remains open for the case of critical coupling and non-Liouville frequency; see \cite{AJ, AYZ14, CEY90} and references therein). Indeed, it is in general a hard problem to show that all labels given by the gap labeling theorem correspond to gaps and there are only few results of this kind.

Here we show the stronger (or ``dry'') form of Cantor spectrum for the Fibonacci Hamiltonian and establish complete gap labeling:

\begin{theorem}\label{t.completegaplabeling}
For every $\lambda > 0$, all gaps allowed by the gap labeling theorem are open. That is,
\begin{equation}\label{f.completelabeling}
\{ N_\lambda(E) : E \in \R \setminus \Sigma_\lambda \} = \{ \{ m \varphi \} : m \in \Z \} \cup \{ 1 \}.
\end{equation}
\end{theorem}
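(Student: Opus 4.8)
The plan is to combine Theorem~\ref{t.ddcs} with the classical description of $\Sigma_\lambda$ via periodic approximants, the trace map serving as the bridge between the two. Recall the setup: writing $M_k(E)$ for the transfer matrix of $H_{\lambda,\omega}$ over a block of $F_k$ sites ($F_k$ the $k$-th Fibonacci number) and $x_k(E)=\tfrac12\tr M_k(E)$, the sequence $(x_k(E))_k$ obeys the Fibonacci trace recursion on the invariant surface $S_\lambda=\{x^2+y^2+z^2-2xyz-1=\lambda^2/4\}$; by S\"ut\H{o}, $E\in\Sigma_\lambda$ iff the trace orbit of $E$ is bounded, and the $F_k$-periodic approximant $H^{(k)}$ has spectrum $\sigma_k=\{E:|x_k(E)|\le 1\}$. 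The trace map has a compact hyperbolic non-wandering set $\Omega_\lambda\subset S_\lambda$, and the substance of Theorem~\ref{t.ddcs} is that the analytic curve of initial conditions $\ell_\lambda=\{(\tfrac{E-\lambda}{2},\tfrac E2,1):E\in\R\}$ is transverse to the stable lamination of $\Omega_\lambda$, with $\Sigma_\lambda=\{E:\ell_\lambda(E)\in W^s(\Omega_\lambda)\}$; this transversality is exactly what makes $\Sigma_\lambda$ a dynamically defined Cantor set, hence a set whose complement is a countable union of pairwise disjoint, genuinely nondegenerate bounded gaps, indexed naturally by the finite admissible words of a subshift of finite type coding $\Omega_\lambda$.

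The first step is a $\lambda$-independent combinatorial bookkeeping. Ordering the $F_k$ bands of $H^{(k)}$ and following how each band of $\sigma_k$ subdivides into bands of $\sigma_{k+1}$ and $\sigma_{k+2}$ — the band/gap analysis of Bellissard--Iochum--Scoppola--Testard and Raymond — one attaches to every admissible label $\{m\varphi\}$ a canonical position in the hierarchy of approximants: a first level $k(m)$ at which a gap carrying this label is forced, its endpoints lying among the solutions of $x_{k(m)}(E)=\pm1$ with a prescribed sign pattern. I would then identify this with a gap of $\Sigma_\lambda$ by matching the band/gap combinatorics of the $\sigma_k$'s with the symbolic coding of $\Omega_\lambda$ supplied by Theorem~\ref{t.ddcs}; since the stable gaps of a dynamically defined Cantor set are nondegenerate, this also shows the gap in question is open.

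It remains to compute the IDS value on this gap. As $N_\lambda$ is continuous and constant on each gap of $\Sigma_\lambda$, and this gap is approximated by the corresponding gap of $\sigma_k$ for all $k\ge k(m)$, the value equals the limit of the approximants' IDS values on those gaps. For the $F_k$-periodic operator this value is an integer multiple of $1/F_k$, and the explicit gap labeling for periodic Schr\"odinger operators, together with the standard computation of these labels within the Fibonacci hierarchy (cf.\ \cite{BBG92}), identifies the limit as $\{m\varphi\}$. Running this over all $m\in\Z$ shows every element of $\{\{m\varphi\}:m\in\Z\}$ is attained; the label $1$ corresponds to the semi-infinite gap above $\Sigma_\lambda$, on which $N_\lambda\equiv 1$. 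Combined with the inclusion \eqref{f.fibgaplabels} already provided by the gap labeling theorem, this yields \eqref{f.completelabeling}.

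The crux — and the only place where the full range $\lambda>0$, rather than small or large $\lambda$ as in the previously known cases, enters — is the transversality of $\ell_\lambda$ to the stable lamination of $\Omega_\lambda$ for every $\lambda>0$. Without it a band of some $H^{(k)}$ could collapse to a point at special values of $\lambda$, closing a candidate gap: a collapse at $\lambda_0$ means $E\mapsto x_k(E)$ has a critical point with critical value $\pm1$, i.e.\ $\ell_{\lambda_0}$ is tangent to the stable manifold of the periodic point of the trace map that codes that band edge. This is precisely the non-transversality ruled out in the proof of Theorem~\ref{t.ddcs}, so I would invoke that theorem; granting it, what is left is the lengthy but essentially classical matching of the trace-map coding with the Fibonacci band/gap combinatorics.
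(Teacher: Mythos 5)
Your plan is essentially the paper's proof: the authors simply invoke Theorem~\ref{t.transversal} (transversality for all $\lambda>0$) together with \cite[Theorem~4.3]{DG11}, which already carries out the band/gap combinatorics, the matching with the trace-map coding, and the identification of each gap with a label $\{m\varphi\}$, once transversality is granted. The one cosmetic slip is that the transversality you lean on is Theorem~\ref{t.transversal}, not Theorem~\ref{t.ddcs} (the latter being itself a consequence of the former); with that citation corrected, your sketch is the argument the paper outsources to \cite{DG11}.
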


Raymond proved \eqref{f.completelabeling} for $\lambda > 4$ \cite{Ra} and Damanik and Gorodetski proved \eqref{f.completelabeling} for $\lambda > 0$ sufficiently small \cite{DG11}. In \cite{DG11} it was also shown that all gaps open linearly as the coupling constant is turned on. It was conjectured in \cite{DG11} that \eqref{f.completelabeling} holds for every $\lambda > 0$, and Theorem~\ref{t.completegaplabeling} proves this conjecture.

Our next result concerns the exact-dimensionality of the density of states measure for every value of the coupling constant.

\begin{theorem}\label{t.doesexactdim}
For every $\lambda > 0$, the density of states measure $\nu_\lambda$ is exact-dimensional. Namely, for every $\lambda > 0$, the limit {\rm (}called the scaling exponent of $\nu_\lambda$ at $E${\rm )}
$$
\lim_{\varepsilon \downarrow 0} \frac{\log \nu_\lambda(E - \varepsilon , E + \varepsilon)}{\log \varepsilon}
$$
$\nu_\lambda$-almost everywhere exists  and is constant.
\end{theorem}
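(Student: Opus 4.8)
The plan is to transport the question to the Fibonacci trace map (Subsection~\ref{ss.tracemap}) and to realize $\nu_\lambda$ as a dynamically natural measure on the non-wandering hyperbolic set $\Omega_\lambda$ of the trace map, whose exact-dimensionality is then a matter of thermodynamic formalism. Recall that $\Sigma_\lambda$ is exactly the set of energies $E$ whose initial point $\ell_\lambda(E)$ on the line of initial conditions has bounded trace-map orbit, equivalently lies in the stable lamination $W^s(\Omega_\lambda)$, and that $\ell_\lambda$ is transverse to $W^s(\Omega_\lambda)$. Sliding along stable manifolds therefore identifies $\Sigma_\lambda\subset\R$ with a Cantor subset of a local unstable manifold; under this identification $\Sigma_\lambda$ is a ($C^2$, indeed analytic) dynamically defined Cantor set coded by a subshift of finite type and carrying an induced $C^2$ expanding Markov map $F$ — this is the mechanism behind Theorem~\ref{t.ddcs}, and it is the structure on which everything below rests.

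The first and hardest step is the identification of $\nu_\lambda$: I would show that, pushed through the coding above, $\nu_\lambda$ is an equilibrium state for an explicit H\"older continuous potential on the subshift. The input is the band structure of the finite-volume operators together with gap labeling: the edges of the spectral bands at level $n$ are controlled by the trace map, the combinatorics of how level-$(n+1)$ bands sit inside level-$n$ bands follows the Fibonacci recursion, and $N_\lambda$ distributes its increments so that each of the $\asymp\varphi^{\,n}$ level-$n$ bands receives $\nu_\lambda$-mass $\asymp\varphi^{-n}$. Combined with hyperbolicity of $\Omega_\lambda$ and bounded distortion for $F$, this forces $\nu_\lambda$ to be a Gibbs measure for a H\"older potential. (Should that potential turn out to be the one whose equilibrium state is the measure of maximal entropy of $T|_{\Omega_\lambda}$, one may instead quote exact-dimensionality of hyperbolic measures for the real-analytic surface diffeomorphism $T|_{S_\lambda}$ --- the Barreira--Pesin--Schmeling theorem --- and push the resulting measure forward along the stable lamination onto $\ell_\lambda$, using that the stable holonomies are Lipschitz and $\ell_\lambda$ is transverse.)

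Granting the identification, exact-dimensionality is routine. Let $\chi_\nu=\int\log|F'|\,d\nu_\lambda>0$ and let $h_\nu$ be the measure-theoretic entropy of $(F,\nu_\lambda)$. For $\nu_\lambda$-a.e.\ $E$, the Birkhoff ergodic theorem gives $\tfrac{1}{n}\log|(F^n)'(E)|\to\chi_\nu$ and the Shannon--McMillan--Breiman theorem gives $-\tfrac{1}{n}\log\nu_\lambda(P_n(E))\to h_\nu$, where $P_n(E)$ is the level-$n$ Markov piece containing $E$. Bounded distortion then yields $|P_n(E)|\asymp e^{-n\chi_\nu}$, and comparing an interval $(E-\varepsilon,E+\varepsilon)$ with the level-$n$ pieces it meets --- using bounded distortion and the Markov structure, i.e.\ the bounded geometry of a dynamically defined Cantor set --- gives $\log\nu_\lambda(E-\varepsilon,E+\varepsilon)/\log\varepsilon\to h_\nu/\chi_\nu$ as $\varepsilon\downarrow0$. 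Thus the scaling exponent exists $\nu_\lambda$-a.e.; it is $F$-invariant, $F$ is locally bi-Lipschitz, and $\nu_\lambda$ is ergodic, so the exponent is $\nu_\lambda$-a.e.\ equal to the constant $h_\nu/\chi_\nu$, which lies in $[0,1]$ since $\nu_\lambda$ is a non-atomic probability measure on $\R$. This constant is then recognized, via the trace map, as the dimension quantity appearing in Theorem~\ref{t.identities}.

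The main obstacle is therefore the first step: carrying out the identification of $\nu_\lambda$ with the pushforward of an equilibrium state \emph{uniformly over all} $\lambda>0$, with the transversality of $\ell_\lambda$ to $W^s(\Omega_\lambda)$ and the hyperbolicity of $\Omega_\lambda$ under quantitative control (both of which must anyway be in hand for Theorem~\ref{t.ddcs}). Everything after that is standard thermodynamic formalism for $C^2$ conformal repellers, the only mild subtlety being the ball-versus-cylinder comparison, which is taken care of by the regularity of $\Sigma_\lambda$ supplied by Theorem~\ref{t.ddcs}.
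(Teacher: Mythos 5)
Your proposal is correct and matches the paper's approach in substance: the paper proves Theorem~\ref{t.doesexactdim} by simply citing \cite[Theorem~1.1]{DG12} (which establishes exact-dimensionality of $\nu_\lambda$ conditionally on transversality of $\ell_\lambda$ to $W^s(\Lambda_\lambda)$) together with the newly proved Theorem~\ref{t.transversal}, and your plan is precisely an outline of the argument carried out in \cite{DG12}, with the same decisive new ingredient (transversality for all $\lambda>0$). Your hedge also resolves in the simpler direction: $\nu_\lambda$ is indeed the stable-holonomy pushforward of (a Markov restriction of) the measure of maximal entropy $\mu_{\lambda,\mathrm{max}}$ --- your observation that each of the $\asymp\varphi^{n}$ level-$n$ bands receives $\nu_\lambda$-mass $\asymp\varphi^{-n}$ is exactly why --- consistent with the identity $\dim_H\nu_\lambda=\dim_H\mu_{\lambda,\mathrm{max}}$ in \eqref{e.doesmeasureidentity}.
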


In \cite{DG12} Damanik and Gorodetski had shown the exact-dimensionality of $\nu_\lambda$ for $\lambda > 0$ sufficiently small. A particular consequence of the exact-dimensionality of $\nu_\lambda$ is that virtually all the known characteristics of dimension type of the measure coincide. In particular, the following four dimensions associated with the measure $\nu_\lambda$, those most relevant to quantum dynamics, coincide (namely with the almost everywhere value of the limit above):
\begin{align*}
\dim_H \nu_\lambda & = \inf \{ \dim_H(S) : \nu_\lambda(S) = 1 \}, \\
\dim_H^- \nu_\lambda & = \inf \{ \dim_H(S) : \nu_\lambda(S) > 0 \}, \\
\dim_P \nu_\lambda & = \inf \{ \dim_P(S) : \nu_\lambda(S) = 1 \}, \\
\dim_P^- \nu_\lambda & = \inf \{ \dim_P(S) : \nu_\lambda(S) > 0 \}.
\end{align*}
Here, $\dim_P(S)$ denotes the packing dimension of the Borel set $S \subset \R$. These four dimensions are called the upper and lower Hausdorff dimension and the upper and lower packing dimension of $\nu_\lambda$, respectively; compare; for example, \cite{F97}.

\subsection{Trace Map Dynamics and Transversality}\label{ss.tracemap}

There is a fundamental connection between the spectral properties of the Fibonacci Hamiltonian and the dynamics of the \textit{trace map}
\begin{equation}\label{e.tracemapdef}
T : \Bbb{R}^3 \to \Bbb{R}^3, \; T(x,y,z)=(2xy-z,x,y).
\end{equation}
The function $G(x,y,z) = x^2+y^2+z^2-2xyz-1$ is invariant\footnote{It is usually called the Fricke-Vogt invariant.} under the action of $T$, and hence $T$ preserves the family of cubic surfaces\footnote{The surface $S_0$ is known as Cayley cubic.}
\begin{equation}\label{e.surfacelambdadef}
S_\lambda = \left\{(x,y,z)\in \Bbb{R}^3 : x^2+y^2+z^2-2xyz=1+ \frac{\lambda^2}{4} \right\}.
\end{equation}
It is therefore natural to consider the restriction $T_{\lambda}$ of the trace map $T$ to the invariant surface $S_\lambda$. That is, $T_{\lambda}:S_\lambda \to S_\lambda$, $T_{\lambda}=T|_{S_\lambda}$. We denote by $\Lambda_{\lambda}$ the set of points in $S_\lambda$ whose full orbits under $T_{\lambda}$ are bounded (it is known that $\Lambda_\lambda$ is equal to the non-wandering set of $T_\lambda$; see ).

Denote by $\ell_\lambda$ the line
\begin{equation}\label{e.loic}
\ell_\lambda = \left\{ \left(\frac{E-\lambda}{2}, \frac{E}{2}, 1 \right) : E \in \R \right\}.
\end{equation}
It is easy to check that $\ell_\lambda \subset S_\lambda$. The key to the fundamental connection between the spectral properties of the Fibonacci Hamiltonian and the dynamics of the trace map is the following result of S\"ut\H{o} \cite{S87}. An energy $E \in \R$ belongs to the spectrum $\Sigma_\lambda$ of the Fibonacci Hamiltonian if and only if the positive semiorbit of the point $(\frac{E-\lambda}{2}, \frac{E}{2}, 1)$ under iterates of the trace map $T$ is bounded. This connection shows that spectral properties of the Fibonacci Hamiltonian can be studied via an analysis of the dynamics of the trace map.

Another very important ingredient is the following. For every $\lambda > 0$, $\Lambda_\lambda$ is a locally maximal compact transitive hyperbolic set of $T_{\lambda} : S_\lambda \to S_\lambda$; see \cite{Can, Cas, DG09}. This fact allows one to use powerful tools from hyperbolic dynamics in exploring the connection between the operator and the trace map. Actually, this realization is the driving force behind all of the recent advances (roughly those dating back to 2008, starting with \cite{DEGT}). To fully exploit this, one needs that the stable manifolds of points in $\Lambda_\lambda$ intersect the line of initial conditions, $\ell_\lambda$, transversally. This crucial fact was known for $\lambda$ sufficiently large \cite{Cas} or sufficiently small \cite{DG09}, but open in the intermediate regime. As a consequence, many of the recent results could only be shown in the regimes of small and large coupling.

\begin{theorem}\label{t.transversal}
For every $\lambda > 0$, $\ell_\lambda$ intersects $W^s(\Lambda_\lambda)$ transversally.
\end{theorem}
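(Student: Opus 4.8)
The plan is to establish transversality globally by combining the two regimes where it is already known with a compactness/continuity argument that handles the intermediate coupling constants, exploiting the real-analytic dependence of all the objects involved on the parameter $\lambda$ and the energy $E$. Concretely, I would first recall that for fixed $\lambda>0$ the set $\Lambda_\lambda$ is a locally maximal hyperbolic set for the real-analytic diffeomorphism $T_\lambda\colon S_\lambda\to S_\lambda$, so its stable lamination $W^s(\Lambda_\lambda)$ depends continuously (indeed $C^1$, and in fact real-analytically on each local stable leaf) on $\lambda$, and the line of initial conditions $\ell_\lambda$ also varies real-analytically in $\lambda$. The intersection $\ell_\lambda\cap W^s(\Lambda_\lambda)$ is precisely (the image under $E\mapsto(\tfrac{E-\lambda}{2},\tfrac{E}{2},1)$ of) the spectrum $\Sigma_\lambda$, by S\"ut\H{o}'s theorem. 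So the statement to prove is that at each such intersection point the tangent line to $\ell_\lambda$ is not contained in the tangent plane to the stable leaf through that point.

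Next I would set up the quantitative version. For each intersection point $p=p(\lambda,E)\in\ell_\lambda\cap W^s(\Lambda_\lambda)$, let $\theta(\lambda,E)$ denote the angle between $\ell_\lambda$ and the stable leaf $W^s(p)$ at $p$; transversality at $(\lambda,E)$ is the assertion $\theta(\lambda,E)\neq 0$. The key structural fact, which I would extract from the analysis of the trace map in the cited works, is that one can iterate: a point on $\ell_\lambda$ whose forward orbit stays bounded eventually enters a fixed neighborhood of $\Lambda_\lambda$ on which one has good hyperbolic coordinates, and transversality of $\ell_\lambda$ with $W^s$ at $p$ is equivalent to transversality of $T_\lambda^n(\ell_\lambda)$ with the local stable manifold at $T_\lambda^n(p)$ for large $n$, with the curve $T_\lambda^n(\ell_\lambda)$ converging (in the $C^1$ topology, on the relevant compact piece) to a curve lying inside the unstable lamination $W^u(\Lambda_\lambda)$. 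Thus transversality of $\ell_\lambda$ and $W^s(\Lambda_\lambda)$ reduces to transversality between the unstable and stable laminations of $\Lambda_\lambda$ at points of $\Lambda_\lambda$ together with the (non-tangency) condition that the limiting curve is genuinely transverse to $W^s$ — but the laminations $W^s$ and $W^u$ of a hyperbolic set are automatically transverse, so the real content is the non-degeneracy of the limiting curve, i.e.\ that $T_\lambda^n(\ell_\lambda)$ is not asymptotically tangent to $W^s$. I would then argue that the set of ``bad'' parameters $\lambda$ — those for which some tangency occurs — is, by the real-analyticity of $T$ and of $G$, either all of $(0,\infty)$ or a discrete set; since transversality is known to hold on the open sets $\{\lambda \text{ small}\}$ (by \cite{DG09}) and $\{\lambda\ge 16\}$ (by \cite{Cas}), the bad set is not all of $(0,\infty)$, hence discrete, and I must rule out the remaining finitely-many-per-compact-interval exceptional values.

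The main obstacle, and where the real work lies, is precisely this last step: upgrading ``transversality fails on at most a discrete set of $\lambda$'' to ``transversality never fails.'' A tangency between $\ell_\lambda$ and $W^s(\Lambda_\lambda)$, if it occurred for some $\lambda_0$, would produce a point of the spectrum $\Sigma_{\lambda_0}$ at which the natural Cantor-set structure degenerates; I would argue this is incompatible with uniform hyperbolicity of $\Lambda_{\lambda_0}$ by a normal-hyperbolicity / cone-field argument: the line $\ell_\lambda$ can be shown to lie in a cone around the unstable direction (after finitely many iterates, using that its forward-bounded points accumulate on $\Lambda_\lambda$), and a curve in the unstable cone field of a hyperbolic set is automatically transverse to every local stable manifold. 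The delicate point is establishing the cone condition for $\ell_\lambda$ uniformly — i.e.\ showing that the tangent direction of $\ell_\lambda$, or rather of its iterates, is strictly expanded and thus cannot align with the contracting stable direction — which requires an explicit estimate on $DT_\lambda$ along $\ell_\lambda$ valid for all $\lambda>0$, not just asymptotically. I expect this to be carried out by a direct computation with the derivative cocycle of the trace map restricted to $S_\lambda$, combined with the known description (from S\"ut\H{o} and Casdagli) of how $\ell_\lambda$ wraps around $S_\lambda$ under iteration, so that the cone condition becomes a finite, checkable inequality; once that is in place, transversality for every $\lambda>0$ follows with no exceptional values, and Theorem~\ref{t.transversal} is proved.
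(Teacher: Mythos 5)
There is a genuine gap, and it sits exactly where you flag ``the main obstacle.'' First, the cone-field/inclination-lemma reduction you propose is circular: the Inclination Lemma guarantees that $T_\lambda^n(\ell_\lambda)$ accumulates $C^1$ on the unstable lamination only for pieces of $\ell_\lambda$ that are \emph{already} transversal to $W^s(\Lambda_\lambda)$; if a tangency occurred at some $p$, the forward iterates of a small arc through $p$ would accumulate on the \emph{stable} leaf through the $\omega$-limit of $p$, not on $W^u$, so the ``limiting curve'' you want to analyze simply is not there. Second, the fallback — a direct cone condition on $DT_\lambda$ along $\ell_\lambda$, valid for all $\lambda$ — is precisely what the paper explicitly says it does \emph{not} do: ``the proof of Theorem~\ref{t.transversal} is not a straightforward construction of an invariant cone field but rather uses the fact that the trace map is polynomial as well as spectral arguments (the fact that $\Sigma_\lambda$ does not have isolated points).'' Third, the claim that real-analyticity forces the set of bad $\lambda$ to be discrete is not justified as stated: the stable lamination is not a single analytic function of $(\lambda,E)$, and while \cite{Bedford1993} gives that tangencies are isolated along $\ell_\lambda$ for a \emph{fixed} $\lambda$, that is a different statement from discreteness in $\lambda$. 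The paper sidesteps this by taking the infimal $\lambda_0$ at which a tangency occurs, without needing discreteness.

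What the paper actually does, and what is missing from your sketch, is the following. Suppose $\lambda_0$ is the smallest coupling at which a tangency occurs. One complexifies $\ell_\lambda$ and the stable leaves as holomorphic curves in the complexified invariant surface, and observes — via S\"ut\H{o}'s criterion extended to complex energies and the fact that the trace map is polynomial — that \emph{all} intersections of these complexified curves are real. Near a tangency of order $k$ one rectifies $\ell_{\lambda_0}$ holomorphically and represents the tangent leaf as a graph of a holomorphic function with a zero of order $k$ at the origin; Hurwitz's theorem then forces exactly $k$ zeros for nearby $\lambda$, all simple (by the inductive transversality assumption) and all real (by the observation above). For $k>2$ a pigeonhole/intermediate-value argument using the uniform separation of leaves (a compactness input) produces a tangency of some \emph{other} leaf with $\ell_\lambda$ for some $\lambda<\lambda_0$, a contradiction. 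So $k=2$. Quadratic tangencies are then ruled out case-by-case by how they unfold: interior tangencies and one boundary case again produce tangencies for $\lambda<\lambda_0$, and the remaining boundary case would create a gap whose endpoints lie on a \emph{single} stable manifold — contradicting the known gap-opening mechanism (gaps are bounded by stable manifolds of two distinct periodic orbits) valid for all $\lambda<\lambda_0$ by \cite[Theorem~4.3]{DG11}. None of this machinery — complexification, reality of complex intersections, Hurwitz, the ``no isolated points of $\Sigma_\lambda$'' input, and the gap-labeling input — appears in your proposal, and it is this, not a cone estimate, that closes the argument.
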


Here, $\ell_\lambda$ denotes the line of initial conditions given in \eqref{e.loic} and $W^s(\Lambda_\lambda)$ denotes the collection of stable manifold of points in the locally maximal compact transitive hyperbolic set $\Lambda_\lambda$ of $T_{\lambda} : S_\lambda \to S_\lambda$.

Theorems~\ref{t.ddcs}, \ref{t.completegaplabeling}, and \ref{t.doesexactdim} are consequences of Theorem~\ref{t.transversal}. In fact, each of the statements in Theorems~\ref{t.ddcs}, \ref{t.completegaplabeling}, and \ref{t.doesexactdim} was previously known for $\lambda > 0$ sufficiently small \cite{DG09, DG11, DG12}; but more precisely, these statements were shown in each case to hold for all values of the coupling constant between zero and the specific value where a breakdown of transversality first occurs (or $\infty$ if no such value exists). Since transversality is easily seen to hold for $\lambda > 0$ sufficiently small \cite{DG09}, one could derive the desired statements unconditionally in the small coupling regime. For this reason, proving the absence of a breakdown of transversality had been one of the major goals in the study of the Fibonacci Hamiltonian, and Theorem~\ref{t.transversal} finally accomplishes this goal.

It is interesting to note that the proof of Theorem~\ref{t.transversal} is not a straightforward construction of an invariant cone field but rather uses the fact that the trace map is polynomial as well as spectral arguments (the fact that $\Sigma_\lambda$ does not have isolated points).

\subsection{Connections between Spectral Characteristics and Dynamical Quantities}

Recall that we are primarily interested in the following four quantities associated with the Fibonacci Hamiltonian: the upper transport exponents $\tilde \alpha^\pm_u(\lambda)$, the dimension of the spectrum $\dim_H \Sigma_\lambda$, the dimension of the density of states measure $\dim_H \nu_\lambda$, and the optimal H\"older exponent of the integrated density of states $\gamma_\lambda$. Our next main result establishes explicit identities connecting the four spectral/quantum dynamical quantities of interest with dynamical quantities associated with the trace map. In this theorem, $\mu_{\lambda,\mathrm{max}}$ denotes the measure of maximal entropy of $T_\lambda|_{\Lambda_\lambda}$ and $\mu_\lambda$ denotes the equilibrium measure of $T_\lambda|_{\Lambda_\lambda}$ that corresponds to the potential $- \dim_H \Sigma_\lambda \cdot \log \|DT_\lambda|_{E^u}\|$. Recall from \eqref{e.goldenratio} that $\varphi$ denotes the golden ratio.

\begin{theorem}\label{t.identities}
For every $\lambda > 0$, we have
\begin{align}
\tilde \alpha^\pm_u(\lambda) & = \frac{\log \varphi}{\inf_{p \in Per(T_\lambda)} \mathrm{Lyap}^u(p)}, \label{e.transportexponentidentity} \\
\dim_H \Sigma_\lambda & = \frac{h_{\mu_\lambda}}{\mathrm{Lyap}^u \mu_\lambda}, \label{e.spectrumidentity} \\
\dim_H \nu_\lambda & = \dim_H \mu_{\lambda,\mathrm{max}} = \frac{h_\mathrm{top}(T_\lambda)}{\mathrm{Lyap}^u \mu_{\lambda,\mathrm{max}}} = \frac{\log \varphi}{\mathrm{Lyap}^u \mu_{\lambda,\mathrm{max}}}, \label{e.doesmeasureidentity} \\
\gamma_\lambda & = \frac{\log \varphi}{\sup_{p \in Per(T_\lambda)} \mathrm{Lyap}^u(p)}. \label{e.hoelderexponentidentity}
\end{align}
\end{theorem}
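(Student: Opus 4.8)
The plan is to establish the four identities in Theorem~\ref{t.identities} by exploiting the dictionary between the spectrum $\Sigma_\lambda$ and the hyperbolic set $\Lambda_\lambda$ that is now available for all $\lambda>0$ thanks to Theorem~\ref{t.transversal}. The conceptual starting point is S\"ut\H{o}'s characterization: $E\in\Sigma_\lambda$ if and only if the forward orbit of the initial point $\left(\tfrac{E-\lambda}{2},\tfrac{E}{2},1\right)\in\ell_\lambda$ stays bounded, together with the fact that $\ell_\lambda$ intersects $W^s(\Lambda_\lambda)$ transversally. This gives a map from a neighborhood of $\Sigma_\lambda$ in $\ell_\lambda$ into $W^s(\Lambda_\lambda)$ that is a diffeomorphism onto its image, so that $\Sigma_\lambda$ is (bi-Lipschitz, in fact $C^1$) identified with the intersection of a piece of $W^s$ with $\Lambda_\lambda$, i.e.\ with a stable Cantor slice of $\Lambda_\lambda$. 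Under this identification the return times of the trace map correspond to the Fibonacci numbers $F_k$, whose growth rate is $\log\varphi$; this is the source of the ubiquitous $\log\varphi$ appearing as a numerator, and also why $h_{\mathrm{top}}(T_\lambda)=\log\varphi$, a fact that should be recorded first (it follows from the semiconjugacy of $T_\lambda|_{\Lambda_\lambda}$ to a subshift of finite type with the right counting of periodic orbits, or from known results on the trace map).

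Next I would treat \eqref{e.spectrumidentity}. Since $\Sigma_\lambda$ is, via the above identification, a $C^1$ image of a stable slice of the locally maximal hyperbolic set $\Lambda_\lambda$, its Hausdorff dimension equals the Hausdorff dimension of that slice, and by the standard theory of horseshoes (Manning--McCluskey, Palis--Takens) the dimension of a stable slice is the unique root $t$ of the Bowen equation $P(T_\lambda|_{\Lambda_\lambda}, -t\log\|DT_\lambda|_{E^u}\|)=0$. By the variational principle this root is attained at the equilibrium measure $\mu_\lambda$ associated with the potential $-\dim_H\Sigma_\lambda\cdot\log\|DT_\lambda|_{E^u}\|$, and evaluating $P=0$ at $\mu_\lambda$ yields precisely $h_{\mu_\lambda}=\dim_H\Sigma_\lambda\cdot\mathrm{Lyap}^u\mu_\lambda$, which is \eqref{e.spectrumidentity}. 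For \eqref{e.doesmeasureidentity}, the key is that the density of states measure $\nu_\lambda$ corresponds under the S\"ut\H{o} identification to the measure of maximal entropy $\mu_{\lambda,\mathrm{max}}$ of $T_\lambda|_{\Lambda_\lambda}$ pushed to the stable slice; this is because $\nu_\lambda$ is the limiting distribution of periodic-orbit / eigenvalue counting measures, and periodic orbits of $T_\lambda$ equidistribute with respect to $\mu_{\lambda,\mathrm{max}}$. Then the Ledrappier--Young / Young dimension formula for hyperbolic measures on surfaces gives $\dim_H\mu_{\lambda,\mathrm{max}} = h_{\mu_{\lambda,\mathrm{max}}}/\mathrm{Lyap}^u\mu_{\lambda,\mathrm{max}}$, and since $\mu_{\lambda,\mathrm{max}}$ is the measure of maximal entropy, $h_{\mu_{\lambda,\mathrm{max}}}=h_{\mathrm{top}}(T_\lambda)=\log\varphi$; this requires the exact-dimensionality of $\nu_\lambda$, which is Theorem~\ref{t.doesexactdim}, itself a consequence of Theorem~\ref{t.transversal}.

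The two remaining identities \eqref{e.transportexponentidentity} and \eqref{e.hoelderexponentidentity} concern the extremal behavior of the derivative cocycle, and here the strategy is to compare the local scaling of the relevant spectral object near a point $E\in\Sigma_\lambda$ with the contraction rate of the stable manifold through the corresponding point of $\Lambda_\lambda$. For the H\"older exponent \eqref{e.hoelderexponentidentity}: the $\nu_\lambda$-measure of a small interval around $E$ scales like $\varepsilon^{\gamma}$ with the pointwise exponent governed by $\log\varphi$ over the local expansion rate (again via the return-time $=F_k$ correspondence), and the \emph{worst} (largest) local exponent $\tilde\gamma$ in the definition of $\gamma_\lambda$ is achieved along orbits with the \emph{largest} unstable Lyapunov exponent; by the variational characterization of the top of the Lyapunov spectrum over an irreducible hyperbolic set, this supremum is attained (in the limit) along periodic orbits, giving $\gamma_\lambda=\log\varphi/\sup_{p\in Per(T_\lambda)}\mathrm{Lyap}^u(p)$. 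Symmetrically, the upper transport exponent $\tilde\alpha_u^\pm$ is controlled by the \emph{fastest} spreading, which corresponds to the \emph{thinnest} part of the spectrum, i.e.\ the \emph{smallest} local expansion rate, hence the infimum over periodic orbits in \eqref{e.transportexponentidentity}; here I would invoke the known relation between $\tilde\alpha_u^\pm$ and the local dimension / transfer-matrix growth established in the work of Damanik--Tcheremchantsev and the realization that on a hyperbolic horseshoe the extremal transfer-matrix growth rates are also attained on periodic orbits. Theorem~\ref{t.equaltransportexponents} is then an immediate corollary, since the right-hand side of \eqref{e.transportexponentidentity} does not see the distinction between $\limsup$ and $\liminf$ (nor $\omega$). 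I expect the main obstacle to be \eqref{e.transportexponentidentity}: unlike the dimension-theoretic identities, which are now fairly mechanical given transversality plus standard thermodynamic formalism, tying the \emph{dynamical} transport exponent precisely to $\inf_{p}\mathrm{Lyap}^u(p)$ requires both directions of a delicate estimate — an upper bound on transport from the uniform lower bound on expansion along all orbits (hence along periodic ones, by density and continuity of $\mathrm{Lyap}^u$), and a matching lower bound on transport coming from a specific near-periodic orbit realizing the infimum — and making these two bounds meet at the exact value is the technically hardest part of the argument.
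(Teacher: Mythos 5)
Your overall strategy matches the paper's for three of the four identities. The Bowen-equation route to \eqref{e.spectrumidentity}, the identification of $\nu_\lambda$ with the projection of $\mu_{\lambda,\mathrm{max}}$ and the dimension formula $h/\mathrm{Lyap}^u$ for \eqref{e.doesmeasureidentity}, and the extremal-scaling-exponent argument via periodic orbits for \eqref{e.hoelderexponentidentity} are essentially what the paper does (via Section~\ref{s.cohomology}'s thermodynamic discussion, the $\lambda$-small case from earlier work extended by Theorem~\ref{t.transversal}, and Theorem~\ref{t.tech}, respectively). So for those three identities you have the right skeleton.

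For \eqref{e.transportexponentidentity} there is a genuine gap, beyond what you acknowledge. First, you are missing a \emph{second} transversality statement that does not follow from Theorem~\ref{t.transversal} and has to be proved separately: the unstable manifolds of $\Lambda_\lambda$ must be transversal to the circle $C_\lambda=\{z=0\}\cap S_\lambda$ (Proposition~\ref{p.trans}). This is what lets one translate between unstable Lyapunov exponents of the trace map and the derivative data $|x_k'(E_k^{(j)})|$ at the zeros $E_k^{(j)}$ of the polynomials $x_k$, which is the currency in which the Damanik--Tcheremchantsev transport bounds are actually stated. Without it, the bridge from ``expansion rates on $\Lambda_\lambda$'' to ``transfer-matrix growth along the line of initial conditions'' is not established. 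Second, your passage from the infimum of $\mathrm{Lyap}^u$ over all orbits to the infimum over periodic orbits ``by density and continuity of $\mathrm{Lyap}^u$'' does not work as stated: $\mathrm{Lyap}^u$ is not a continuous function on $\Lambda_\lambda$ (it is not even defined at every point, only at regular points), so density of periodic orbits alone gives nothing. What the paper actually uses is the specification property together with the quantitative Anosov Closing Lemma (Lemma~\ref{l.clearin}, and Propositions~\ref{p.first} and \ref{p.second}) to show that the extremal finite-time expansion rates $\inf_p \frac1k\log\|DT^k(p)|_{E^u_p}\|$ are realized, up to $o(1)$, along true periodic orbits; that is a nontrivial shadowing argument, not a continuity argument. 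You correctly flag \eqref{e.transportexponentidentity} as the hardest step, but these two missing ingredients are exactly why the sketch does not close on its own.
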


As mentioned earlier, Theorem~\ref{t.equaltransportexponents} is a direct consequence of \eqref{e.transportexponentidentity}. Another consequence of \eqref{e.transportexponentidentity} is that we can derive explicit lower bounds for $\tilde \alpha^\pm_u(\lambda)$ by simply estimating $\inf_{p \in Per(T_\lambda)} \mathrm{Lyap}^u(p)$ from above using specific choices of periodic points. By the same token, these specific choices of periodic points will also lead to upper bounds for $\gamma_\lambda$ due to \eqref{e.hoelderexponentidentity}. For example, this leads to the following pair of explicit lower and upper bounds (the period $p$ of the periodic point leading to this bound is given in parentheses).

\begin{coro}\label{c.alphalowerbounds}
For every $\lambda > 0$, we have
\begin{align}
\gamma_\lambda& \le \frac{4 \log \varphi}{\log ( 4 \lambda^2 + \sqrt{16 \lambda^4 + 56 \lambda^2 + 45} + 7 ) - \log 2} \le \tilde \alpha^\pm_u(\lambda) \; \, \qquad \qquad (p = 6) \label{e.alphalowerboundperiod6} \\
\gamma_\lambda & \le \frac{6 \log \varphi}{\log ( \lambda^4 + \sqrt{( \lambda^4 + 8 \lambda^2 + 18 )^2 - 4} + 8 \lambda^2 + 18 ) - \log 2} \le \tilde \alpha^\pm_u(\lambda) \; (p = 4) \label{e.alphalowerboundperiod4}
\end{align}
\end{coro}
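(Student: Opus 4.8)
The plan is to deduce Corollary~\ref{c.alphalowerbounds} from the identities \eqref{e.transportexponentidentity} and \eqref{e.hoelderexponentidentity} of Theorem~\ref{t.identities} by evaluating $\mathrm{Lyap}^u$ at two explicit periodic orbits of the trace map. The mechanism is purely soft: those identities write $\tilde\alpha^\pm_u(\lambda)$ and $\gamma_\lambda$ as $\log\varphi$ divided by, respectively, $\inf_{p\in Per(T_\lambda)}\mathrm{Lyap}^u(p)$ and $\sup_{p\in Per(T_\lambda)}\mathrm{Lyap}^u(p)$. So for \emph{any} single periodic point $p^{*}\in Per(T_\lambda)$ one has $\inf_{p}\mathrm{Lyap}^u(p)\le\mathrm{Lyap}^u(p^{*})\le\sup_{p}\mathrm{Lyap}^u(p)$, and since all three quantities are strictly positive ($p^{*}$ is periodic, hence has bounded $T_\lambda$-orbit, hence lies in the hyperbolic set $\Lambda_\lambda$), dividing $\log\varphi$ through reverses the inequalities:
\[
\gamma_\lambda=\frac{\log\varphi}{\sup_{p}\mathrm{Lyap}^u(p)}\ \le\ \frac{\log\varphi}{\mathrm{Lyap}^u(p^{*})}\ \le\ \frac{\log\varphi}{\inf_{p}\mathrm{Lyap}^u(p)}=\tilde\alpha^\pm_u(\lambda).
\]
Thus the whole corollary reduces to exhibiting two periodic orbits and computing their unstable Lyapunov exponents in closed form.

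One concrete choice that yields \eqref{e.alphalowerboundperiod6} is the period-$4$ orbit of $T$ on $S_\lambda$ whose first coordinates cycle through $\bigl(a,-\tfrac12,\tfrac12-a,-\tfrac12\bigr)$, where $a=a(\lambda)=\tfrac14\bigl(1+\sqrt{9+4\lambda^2}\bigr)$; a direct substitution into $x_{k+1}=2x_kx_{k-1}-x_{k-2}$ shows this is genuinely a period-$4$ orbit for every $\lambda>0$, and the choice of $a$ forces the Fricke--Vogt invariant $G$ of the orbit to equal $1+\lambda^2/4$, i.e.\ the orbit lies on $S_\lambda$. For \eqref{e.alphalowerboundperiod4} I would use the palindromic period-$6$ orbit through $\bigl(u,-w,-u,w,-u,-w\bigr)$ with $w=\tfrac{u}{2u-1}$, where $u=u(\lambda)$ is the branch of the quartic cut out by the surface equation $G\equiv 1+\lambda^2/4$ that continues the value $u=\tfrac14(\sqrt5-1)$ at $\lambda=0$. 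Since each orbit is periodic it is contained in $\Lambda_\lambda$, so each chosen point $p^{*}$ is a hyperbolic periodic point with one-dimensional unstable direction and $\mathrm{Lyap}^u(p^{*})=\tfrac1n\log|\mu^u|$, where $n$ is the period and $\mu^u$ is the (unique) eigenvalue of the $3\times 3$ matrix $DT^{\,n}(p^{*})$ of modulus greater than one.

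To get at $|\mu^u|$ without diagonalizing I would use two elementary remarks. First, differentiating $G\circ T=G$ gives $DT(q)^{\top}\nabla G(Tq)=\nabla G(q)$, so for a period-$n$ point $p^{*}$ the vector $\nabla G(p^{*})$ — nonzero because $S_\lambda$ is smooth for $\lambda>0$ — is a $1$-eigenvector of $DT^{\,n}(p^{*})^{\top}$; hence $1$ is an eigenvalue of $DT^{\,n}(p^{*})$. Second, since $\det DT\equiv-1$ on $\R^3$, the remaining two eigenvalues $\mu^u,\mu^s$ satisfy $\mu^u\mu^s=(-1)^n$ and $\mu^u+\mu^s=\mathrm{tr}\,DT^{\,n}(p^{*})-1=:\tau_n$. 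Both orbits have even period, so $\mu^u\mu^s=1$ and $|\mu^u|=\tfrac12\bigl(|\tau_n|+\sqrt{\tau_n^2-4}\bigr)$. It then remains to compute $\tau_n$: writing $DT^{\,n}(p^{*})$ as the ordered product along the orbit of the elementary matrices whose first row is $(2x_{k-1},\,2x_k,\,-1)$ and whose second and third rows are $(1,0,0)$ and $(0,1,0)$, and simplifying the trace by means of the defining equation for $a$ (resp.\ for $u$), one obtains $|\tau_4|=4\lambda^2+7$ for the first orbit and $|\tau_6|=\lambda^4+8\lambda^2+18$ for the second. Substituting $\mathrm{Lyap}^u(p^{*})=\tfrac1n\log\bigl[\tfrac12(|\tau_n|+\sqrt{\tau_n^2-4})\bigr]$ into the displayed chain of inequalities, and noting $\sqrt{(4\lambda^2+7)^2-4}=\sqrt{16\lambda^4+56\lambda^2+45}$, produces exactly \eqref{e.alphalowerboundperiod6} and \eqref{e.alphalowerboundperiod4}.

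The soft part above is genuinely soft; the real effort lies in two bookkeeping steps. First, one must verify that the two periodic orbits are well defined, real, and of \emph{exactly} the stated period for \emph{every} $\lambda>0$ — this amounts to solving the polynomial system expressing the period-$p$ condition together with the surface constraint and checking that the relevant branch stays away from the degeneracy $2u-1=0$. Second, one must actually carry out and simplify the trace of the $p$-fold linearization: the matrix products are routine, but collapsing the trace down to the clean polynomial $4\lambda^2+7$ (resp.\ $\lambda^4+8\lambda^2+18$) is not automatic and must exploit the orbit's own defining relation. I expect this simplification step to be the main obstacle. Hyperbolicity of the chosen orbits — hence positivity of $\mathrm{Lyap}^u(p^{*})$ and the legitimacy of the formula involving $\sqrt{\tau_n^2-4}$ — then follows at once from $|\tau_4|\ge 7>2$ and $|\tau_6|\ge 18>2$, and the only remaining point of care is to select the eigenvalue of modulus $>1$ rather than its reciprocal, after which both inequalities in Corollary~\ref{c.alphalowerbounds} follow.
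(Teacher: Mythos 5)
Your overall strategy is exactly right and is the paper's: plug explicit periodic orbits into the identities $\gamma_\lambda = \log\varphi/\sup_p\mathrm{Lyap}^u(p)$ and $\tilde\alpha^\pm_u = \log\varphi/\inf_p\mathrm{Lyap}^u(p)$ from Theorem~\ref{t.identities}, use the invariance of the Fricke--Vogt form to force $1$ to be an eigenvalue of $DT^n$, and use $\det DT\equiv-1$ to get $\mu^u\mu^s=(-1)^n$ and $|\mu^u|=\tfrac12\bigl(|\tau_n|+\sqrt{\tau_n^2-4}\bigr)$. The period-$4$ orbit you chose is the Baake--Roberts family $Q_b=(-\tfrac12,b,-\tfrac12)$ with $b=\tfrac14(1+\sqrt{9+4\lambda^2})$, and it indeed gives $\tau_4=-(4\lambda^2+7)$, which reproduces \eqref{e.alphalowerboundperiod6} exactly. (You also correctly, if silently, undo a mislabeling in the paper: the numerator $4\log\varphi$ there must come from a period-$4$ orbit, not a period-$6$ one, and likewise $6\log\varphi$ from period $6$; the parenthetical $(p=6)$ / $(p=4)$ tags in the corollary are swapped.)

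However, the period-$6$ orbit you propose does \emph{not} yield \eqref{e.alphalowerboundperiod4}. Your sequence $u,-w,-u,w,-u,-w$ with $w=\tfrac{u}{2u-1}$ is precisely the orbit of $\rho(P_u)$, where $P_u=(u,\tfrac{u}{2u-1},u)$ is the Baake--Roberts period-$2$ point and $\rho(x,y,z)=(-x,-y,z)$ is a sign symmetry preserving $G$. Since $T\rho_1=\rho_3T$, $T\rho_3=\rho_2T$, $T\rho_2=\rho_1T$ for the three even sign changes $\rho_i$, the map $\rho$ commutes with $T^3$ and hence with $T^6$, so $DT^6$ at $\rho(P_u)$ is linearly conjugate (by the diagonal matrix of $\rho$) to $DT^6|_{P_u}=(DT^2|_{P_u})^3$. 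Consequently $\mathrm{Lyap}^u(\rho(P_u))=\mathrm{Lyap}^u(P_u)$ and $\tau_6=\tau_2^3-3\tau_2$, where $\tau_2$ is the Baake--Roberts trace for $P_u$. Working through the constraint $G(P_u)=\lambda^2/4$ with $s=(2u-1)+\tfrac1{2u-1}$ gives $s^2+s-6=\lambda^2$ and $\tau_2=2s+3=2-\sqrt{25+4\lambda^2}$ on your branch, so
\[
\tau_6=\tau_2^3-3\tau_2=152+24\lambda^2-(34+4\lambda^2)\sqrt{25+4\lambda^2},
\]
which is not a polynomial in $\lambda$. It agrees with $-(\lambda^4+8\lambda^2+18)$ only at $\lambda=0$ (both give $-18$); already at $\lambda=1$ your orbit gives $|\tau_6|=38\sqrt{29}-176\approx28.64$, not $27$. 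In fact your orbit, having the same Lyapunov exponent as the period-$2$ family, gives the \emph{sharper} upper bound for $\gamma_\lambda$ that the paper records separately in Corollary~\ref{c.holderexp}, not the looser bound \eqref{e.alphalowerboundperiod4}. The period-$6$ orbit that actually produces $|\tau_6|=\lambda^4+8\lambda^2+18$ is a genuinely different one (the one born from the singular period-$3$ orbit of the Cayley cubic, continuing the orbit through $(0,0,1)$ at $\lambda=0$, whose cat-map multiplier is $\varphi^6$); it is not $\rho(P_u)$, and one must locate and linearize along it rather than along a symmetry-image of the period-$2$ family.
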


The graphs of these two functions are shown in Figure~\ref{fig:logphilyapunovexponents}. We see that for $\tilde \alpha^\pm_u(\lambda)$, \eqref{e.alphalowerboundperiod6} is better for small $\lambda$, while \eqref{e.alphalowerboundperiod4} is better for large $\lambda$, whereas the opposite is true for $\gamma_\lambda$.

\begin{figure}[t]
\includegraphics[scale=.5]{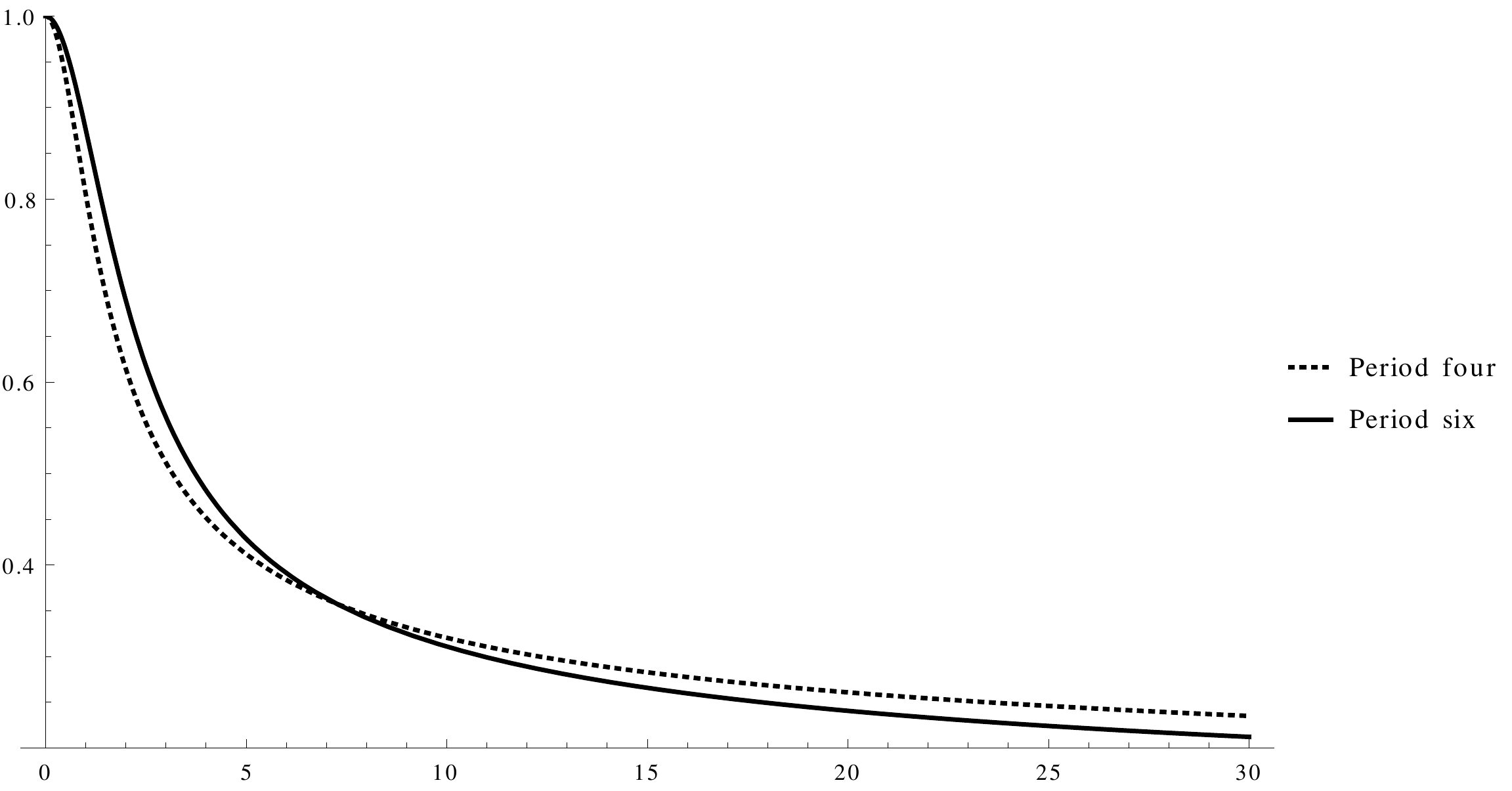}
\caption{The two bounds for $\tilde \alpha^\pm_u(\lambda)$ and $\gamma_\lambda$ from Corollary~\ref{c.alphalowerbounds}.}
\label{fig:logphilyapunovexponents}
\end{figure}

In fact, a better upper bound for $\gamma_\lambda$ can be derived via a different family of periodic points (of period two). The associated Lyapunov exponents can also be given explicitly; for the corresponding expression, see Corollary~\ref{c.holderexp}. The upper bounds resulting from the Lyapunov exponents of the families of period two (left implicit here) and period six (given above) are given in Figure~\ref{fig:logphilyapunovexponents2}.

\begin{figure}[t]
\includegraphics[scale=.5]{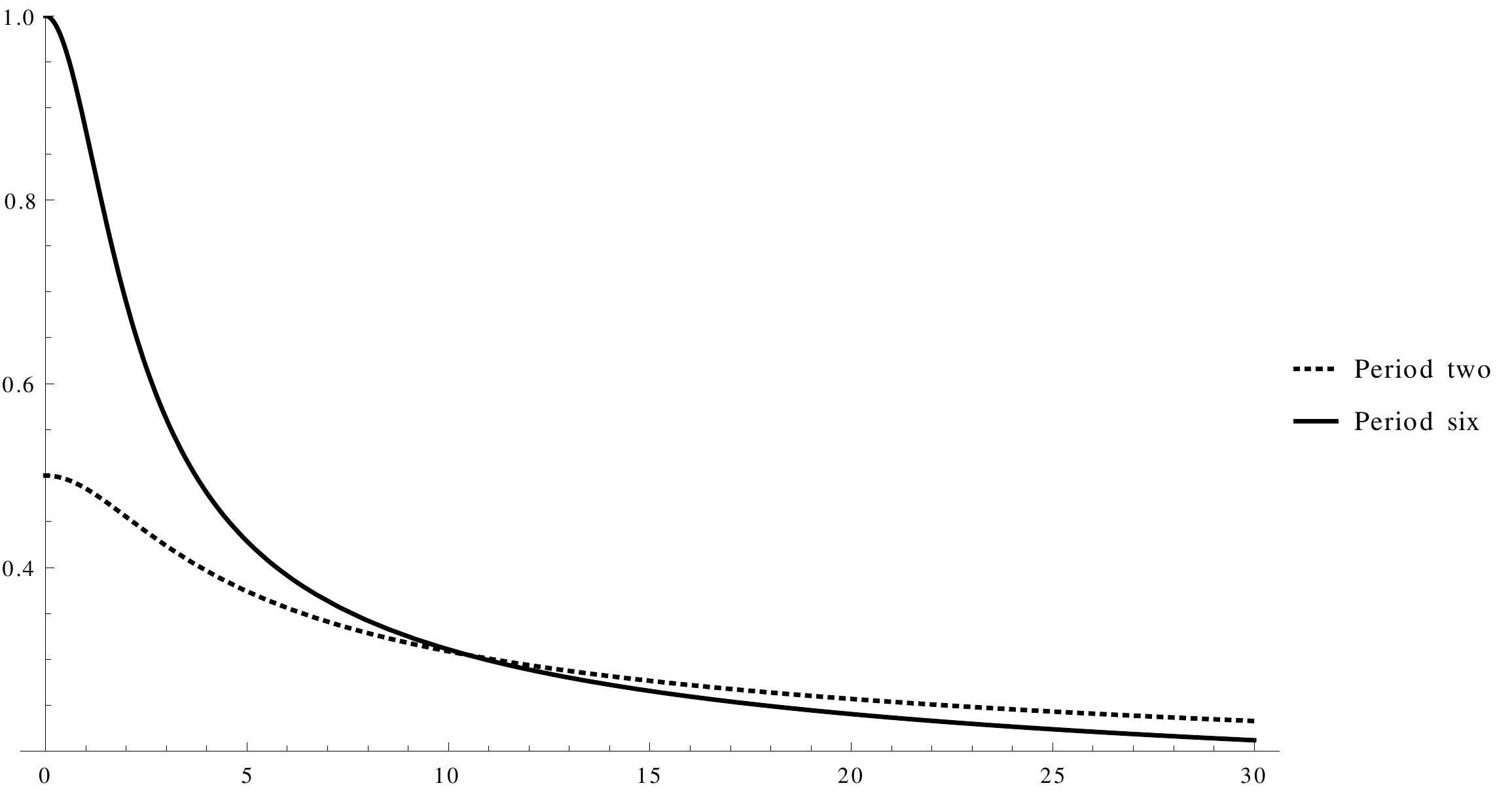}
\caption{Upper bounds for $\gamma_\lambda$ via periodic points of period $2$ and $6$.}
\label{fig:logphilyapunovexponents2}
\end{figure}

\subsection{Thermodynamical Formalism and Relations between Spectral Characteristics}

 By general principles, we have
$$
\gamma_\lambda \le \dim_H \nu_\lambda \le \dim_H \Sigma_\lambda.
$$
This is obvious since $\Sigma_\lambda$ supports the measure $\nu_\lambda$, and the almost everywhere scaling exponent of $\nu_\lambda$ is at least as big as one that works at every point. On the other hand, there is no inequality that relates $\tilde \alpha^\pm_u(\lambda)$ to one of the other three quantities, which holds for general operators.\footnote{For example, in our case at hand it turns out that $\tilde \alpha^\pm_u(\lambda)$ is strictly larger than the other three quantities, while for random potentials, $\tilde \alpha^\pm_u(\lambda)$ is strictly smaller than each of them.} The following theorem shows that for the Fibonacci Hamiltonian and every value of the coupling constant, the four quantities satisfy strict inequalities.

\begin{theorem}\label{t.strictinequalities}
For every $\lambda > 0$, we have
\begin{equation}\label{e.inequalities}
\gamma_\lambda < \dim_H \nu_\lambda < \dim_H \Sigma_\lambda < \tilde \alpha^\pm_u(\lambda).
\end{equation}
\end{theorem}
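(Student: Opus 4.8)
The plan is to deduce everything from Theorem~\ref{t.identities} via the thermodynamic formalism of the transitive subshift of finite type to which $T_\lambda|_{\Lambda_\lambda}$ is conjugate. Set $\psi_\lambda=\log\|DT_\lambda|_{E^u}\|$, a H\"older continuous function on $\Lambda_\lambda$ with $\int\psi_\lambda\,d\mu>0$ for every $T_\lambda$-invariant probability measure $\mu$, and let $P_\lambda(t)=P_{T_\lambda|_{\Lambda_\lambda}}(-t\psi_\lambda)$ be the topological pressure. Then $P_\lambda$ is real-analytic and convex on $\R$, $P_\lambda(0)=h_{\mathrm{top}}(T_\lambda)=\log\varphi$, and $P_\lambda(t)\to-\infty$ as $t\to+\infty$, so $P_\lambda$ has a unique zero $t^\ast>0$; comparing with \eqref{e.spectrumidentity} (Bowen's formula) gives $t^\ast=\dim_H\Sigma_\lambda$. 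For each $t$ let $\mu_t$ be the unique equilibrium state of $-t\psi_\lambda$; then $\mu_0=\mu_{\lambda,\mathrm{max}}$, $\mu_{t^\ast}=\mu_\lambda$, and $-P_\lambda'(t)=\int\psi_\lambda\,d\mu_t=\mathrm{Lyap}^u(\mu_t)$. Writing $L_{\min}=\inf_{p\in Per(T_\lambda)}\mathrm{Lyap}^u(p)=\inf_\mu\mathrm{Lyap}^u(\mu)$ and $L_{\max}=\sup_{p\in Per(T_\lambda)}\mathrm{Lyap}^u(p)=\sup_\mu\mathrm{Lyap}^u(\mu)$ (equality of the per-orbit and per-measure extrema being standard), Theorem~\ref{t.identities} becomes $\tilde\alpha^\pm_u(\lambda)=\log\varphi/L_{\min}$, $\gamma_\lambda=\log\varphi/L_{\max}$, $\dim_H\nu_\lambda=\log\varphi/(-P_\lambda'(0))$, and $\dim_H\Sigma_\lambda=t^\ast$.

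The single non-formal ingredient, which I expect to be the main obstacle, is to show that for \emph{every} $\lambda>0$ the function $\psi_\lambda$ is not cohomologous to a constant; equivalently, by the Liv\v{s}ic theorem, that two periodic orbits of $T_\lambda|_{\Lambda_\lambda}$ have distinct Lyapunov exponents, i.e. $L_{\min}<L_{\max}$. For small $\lambda$ this is implicit in the known asymptotics, but the uniform statement needs an explicit computation. I would exhibit a few explicit families of low-period orbits of the trace map (using that $T$ is polynomial, so their points and multipliers are algebraic in $\lambda$; concretely the period-$2$, period-$4$ and period-$6$ families whose Lyapunov exponents appear in Corollary~\ref{c.alphalowerbounds}) and verify that for each $\lambda>0$ at least two of these exponents differ: pairwise they coincide only on proper real-analytic subsets of $(0,\infty)$, and one arranges these loci to have empty common intersection. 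Granting this, $P_\lambda''>0$ everywhere, so $t\mapsto-P_\lambda'(t)=\mathrm{Lyap}^u(\mu_t)$ is strictly decreasing on all of $\R$; since it also takes values in $[L_{\min},L_{\max}]$, strict monotonicity forces $L_{\min}<-P_\lambda'(t)<L_{\max}$ for every finite $t$, in particular $L_{\min}<\mathrm{Lyap}^u(\mu_{\lambda,\mathrm{max}})<L_{\max}$.

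The three strict inequalities then follow from elementary convexity properties of $P_\lambda$ on $[0,t^\ast]$. First, $\gamma_\lambda<\dim_H\nu_\lambda$ is exactly $L_{\max}>\mathrm{Lyap}^u(\mu_{\lambda,\mathrm{max}})=-P_\lambda'(0)$, recorded above. Second, $\dim_H\nu_\lambda<\dim_H\Sigma_\lambda$, i.e. $P_\lambda(0)/(-P_\lambda'(0))<t^\ast$, follows from the tangent-line bound $P_\lambda(t^\ast)\ge P_\lambda(0)+t^\ast P_\lambda'(0)$, which is \emph{strict} because $P_\lambda$ is strictly convex and $t^\ast>0$; hence $P_\lambda(0)+t^\ast P_\lambda'(0)<P_\lambda(t^\ast)=0$, and dividing by $-P_\lambda'(0)>0$ gives the claim. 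Third, $\dim_H\Sigma_\lambda<\tilde\alpha^\pm_u(\lambda)$, i.e. $t^\ast L_{\min}<\log\varphi=P_\lambda(0)$, follows by considering $Q_\lambda(t)=P_\lambda(t)+tL_{\min}$: one has $Q_\lambda'(t)=L_{\min}-\mathrm{Lyap}^u(\mu_t)\le0$ throughout $[0,t^\ast]$, with $Q_\lambda'(t)<0$ on a right-neighborhood of $0$ since $\mathrm{Lyap}^u(\mu_0)>L_{\min}$; thus $Q_\lambda(t^\ast)<Q_\lambda(0)$, i.e. $t^\ast L_{\min}<\log\varphi$. Assembling these displays yields \eqref{e.inequalities}.

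I would also remark that the middle inequality has a structural proof that may read more transparently: by \eqref{e.doesmeasureidentity} and \eqref{e.spectrumidentity}, $\dim_H\nu_\lambda=h_{\mu_{\lambda,\mathrm{max}}}/\mathrm{Lyap}^u(\mu_{\lambda,\mathrm{max}})$ and $\dim_H\Sigma_\lambda=h_{\mu_\lambda}/\mathrm{Lyap}^u(\mu_\lambda)=t^\ast$, and the map $\mu\mapsto h_\mu/\mathrm{Lyap}^u(\mu)$ attains its supremum $t^\ast$ over invariant measures only at $\mu_\lambda$ (the equality case of $h_\mu-t^\ast\mathrm{Lyap}^u(\mu)\le P_\lambda(t^\ast)=0$, using uniqueness of equilibrium states), while $\mu_\lambda\ne\mu_{\lambda,\mathrm{max}}$ precisely because $t^\ast>0$ and $\psi_\lambda$ is not cohomologous to a constant.
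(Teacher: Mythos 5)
Your overall strategy matches the paper's: express the four spectral quantities via Theorem~\ref{t.identities} in terms of the pressure function of $\phi=-\log\|DT_\lambda|_{E^u}\|$ on the Markov shift, reduce the strict inequalities to strict convexity of $P_\lambda$ together with the fact that $\phi$ is not cohomologous to a constant, and deduce everything from elementary convexity bounds (with Bowen's formula supplying $\dim_H\Sigma_\lambda$ as the zero of $P_\lambda$). Your three convexity arguments are correct and amount to the same geometric picture (the tangent line at $0$, the zero of $P_\lambda$, and the two asymptotes) that the paper invokes via Proposition~\ref{p.thermodynamic}.

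The one place where your plan is less concrete than the paper's is the non-cohomology step, which you correctly single out as ``the main obstacle.'' You propose to exhibit several low-period families and show that the pairwise-coincidence loci of their Lyapunov exponents are proper analytic subsets of $(0,\infty)$ with empty common intersection. That would work in principle, but it requires a genuine computation with more than two families and a check that their coincidence loci do not overlap, which is not obviously easier. The paper (Proposition~\ref{prop:cohom}) does something sharper: it takes exactly \emph{two} explicit families, the period-$2$ points $P_a=(a,a/(2a-1),a)$ and the period-$4$ points $Q_b=(-1/2,b,-1/2)$ computed by Baake and Roberts, parametrizes them on the level surface $S_\lambda$ by imposing $I(P_a)=I(Q_b)=\lambda^2/4$ with $a,b\in[1,\infty)$, and then shows that equality of the averaged multipliers would force $8a^3-4a+1=0$, a polynomial with no roots on $[1,\infty)$. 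Thus those two families have distinct multipliers for \emph{every} $\lambda>0$, with no need to arrange a covering of the parameter line by multiple loci. If you carry out your step, be aware that the explicit algebra with these two specific families already closes the gap cleanly; you do not need to juggle three or more families.
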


The particular inequality $\dim_H \nu_\lambda < \dim_H \Sigma_\lambda$ in \eqref{e.inequalities} establishes a conjecture of Barry Simon, which was made based on an analogy with work of Makarov and Volberg \cite{Mak, MV, V}; see \cite{DG12} for a more detailed discussion. This inequality was shown in \cite{DG12} for $\lambda > 0$ sufficiently small, and hence the conjecture had been partially established there. Our result here settles it in the generality in which it was stated.\footnote{The conjecture does not appear anywhere in print, but it was popularized by Barry Simon in many talks given by him in the past four years.}

The inequality
\begin{equation}\label{e.dimspectranspexp}
\dim_H \Sigma_\lambda < \tilde \alpha^\pm_u(\lambda)
\end{equation}
in \eqref{e.inequalities} is related to a question of Yoram Last. He asked in \cite{L96} whether in general $\dim_H \Sigma_\lambda$ bounds $\tilde \alpha^\pm_u(\lambda)$ from above and conjectured that the answer is no. The inequality \eqref{e.dimspectranspexp} confirms this. This realization is not new. It was shown in \cite{DT08} (resp., \cite{DG14}) that \eqref{e.dimspectranspexp} holds for $\lambda > 0$ sufficiently large (resp., for $\lambda > 0$ sufficiently small). What we add here is that it holds for all $\lambda > 0$.

\bigskip

The identities in Theorem~\ref{t.identities} are instrumental in our proof of Theorem~\ref{t.strictinequalities}. Indeed, once the identities \eqref{e.transportexponentidentity}--\eqref{e.hoelderexponentidentity} are established, Theorem~\ref{t.strictinequalities} can be proved using the thermodynamic formalism, which we will describe next. Define $\phi : \Lambda_\lambda \to \R$ by $\phi(x) = -\log \|DT_\lambda (x)|_{E^u}\|$ and consider the pressure function (sometimes called the Bowen function) $P : t \mapsto P(t\phi)$, where $P(\psi)$ is the topological pressure.\footnote{There are many classical books on the thermodynamical formalism; for example, \cite{B, Ru, W}. We also refer the reader to the recent introductory texts \cite{B11, Iommi, Sa}.} This function has been heavily studied; the next statement summarizes some known results; compare \cite{B, Kel, PP, Ru, W, W1}.

\begin{prop}\label{p.thermodynamic}
Suppose that $\sigma_A : \Sigma_A \to \Sigma_A$ is a topological Markov chain defined by a transitive $0-1$ matrix $A$, and $\phi : \Sigma_A \to \R$ is a H\"older continuous function. Then, the following statements hold.
\begin{itemize}

\item[{\rm (1)}] Variational principle: $P(t\phi) = \sup_{\mu \in \frak{M}} \left\{ h_\mu + t \int \phi \, d\mu \right\}$.

\item[{\rm (2)}] For every $t \in \R$, there exists a unique invariant measure $\mu_t \in \frak{M}$ {\rm (}the equilibrium state{\rm )} such that $P(t\phi) = h_{\mu_t} + t \int \phi \, d\mu_t$.

\item[{\rm (3)}] $P(t\phi)$ is a real analytic function of $t$.

\item[{\rm (4)}] If $\phi$ is cohomological to a constant, then $P(t\phi)$ is a linear function; if $\phi$ is not cohomological to a constant, then $P(t\phi)$ is strictly convex and decreasing.

\item[{\rm (5)}] For every $t_0 \in \R$, the line $h_{\mu_{t_0}} + t \int \phi \, d\mu_{t_0}$ is tangent to the graph of the function $P(t\phi)$ at the point $(t_0, P(t_0\phi))$.

\item[{\rm (6)}] Denote by $\frak{M}$ the space of $\sigma_A$-invariant Borel probability measures. The following limits exist:
$$
\lim_{t \to \infty} \int \phi \, d\mu_t = \sup_{\mu \in \frak{M}} \int \phi \, d\mu, \ \ \ \  \lim_{t \to -\infty} \int \phi \, d\mu_t = \inf_{\mu \in \frak{M}} \int \phi \, d\mu.
$$
The graph of the function $t \mapsto P(t\phi)$ lies strictly above each of the lines $t\cdot \sup_{\mu \in \frak{M}} \int \phi \, d\mu$ and $t\cdot \inf_{\mu \in \frak{M}} \int \phi \, d\mu$.

\end{itemize}
\end{prop}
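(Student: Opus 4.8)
The plan is to derive all six items from the classical thermodynamic formalism for transitive subshifts of finite type with Hölder potentials, organized around the Ruelle--Perron--Frobenius (transfer) operator $\mathcal{L}_\psi f(x) = \sum_{\sigma_A y = x} e^{\psi(y)} f(y)$ acting on the Banach space $C^\theta(\Sigma_A)$ of Hölder functions of a fixed exponent $\theta$ chosen so that $\phi \in C^\theta(\Sigma_A)$. Everything below is standard and we would simply assemble it from the references preceding the proposition; the work is in getting the normalizations and sign conventions right. \emph{Items (1) and (2).} The variational principle $P(t\phi) = \sup_{\mu \in \frak{M}}\{ h_\mu + t\int\phi\,d\mu\}$ is the general variational principle for topological pressure on a compact system and holds for any continuous potential; we would cite it from \cite{W, B}. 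For existence and uniqueness of the equilibrium state $\mu_t$ we would invoke the Ruelle--Perron--Frobenius theorem: for the Hölder potential $t\phi$ on a transitive SFT, $\mathcal{L}_{t\phi}$ has a leading eigenvalue $e^{P(t\phi)}$ which is simple, positive and spectrally isolated, with a strictly positive Hölder eigenfunction $h_t$ and a leading eigenmeasure $\nu_t$ for the dual operator; then $d\mu_t = h_t\,d\nu_t$ is $\sigma_A$-invariant, is a Gibbs state, attains the supremum in the variational principle, and is the unique invariant measure doing so — uniqueness following from the Gibbs property together with ergodicity of Gibbs states (see \cite{B, PP, Ru}).

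\emph{Items (3), (4), (5).} Real analyticity of $t \mapsto P(t\phi)$ follows from analytic perturbation theory: $t \mapsto \mathcal{L}_{t\phi}$ is a real-analytic (indeed entire) family of bounded operators on $C^\theta(\Sigma_A)$, because $t \mapsto e^{t\phi}$ is real-analytic into the Banach algebra $C^\theta(\Sigma_A)$ and that space acts boundedly on itself by multiplication, and the leading eigenvalue is simple and isolated, so it depends real-analytically on $t$, whence so does its logarithm $P(t\phi)$ (cf.\ \cite{PP, Ru}). Convexity of $t\mapsto P(t\phi)$ is immediate from the variational principle as a supremum of affine functions, while differentiating the transfer-operator eigenvalue gives $P'(t) = \int\phi\,d\mu_t$ and $P''(t_0) = \sigma^2_{\mu_{t_0}}(\phi)$, the asymptotic variance of $\phi$ with respect to $\mu_{t_0}$, which is nonnegative and vanishes exactly when $\phi$ is cohomologous to a constant via a Hölder transfer function; this yields the dichotomy in (4), the linear case being the trivial identity $P(t\phi) = P(tc + t(u\circ\sigma_A - u)) = tc + h_{\mathrm{top}}(\sigma_A)$. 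The monotonicity clause in (4) additionally needs $\int\phi\,d\mu < 0$ for every invariant $\mu$ — which holds for the potential $\phi = -\log\|DT_\lambda|_{E^u}\|$ at hand, and which we would record as a standing assumption — so that $P'(t) = \int\phi\,d\mu_t < 0$. Item (5) is then the assertion that, by the variational principle, $P(t\phi) \ge h_{\mu_{t_0}} + t\int\phi\,d\mu_{t_0}$ for all $t$ with equality at $t = t_0$; since $P$ is differentiable by (3), this supporting line is the tangent line at $(t_0, P(t_0\phi))$, of slope $P'(t_0) = \int\phi\,d\mu_{t_0}$.

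\emph{Item (6).} Convexity makes $P'$ monotone and bounded by $\|\phi\|_\infty$, so $\lim_{t\to\pm\infty}\int\phi\,d\mu_t = \lim_{t\to\pm\infty}P'(t)$ exists. To identify the limits, squeeze with the variational principle: for any invariant $\mu$ and any $t>0$,
$$
t\int\phi\,d\mu \ \le\ h_\mu + t\int\phi\,d\mu \ \le\ P(t\phi) \ \le\ h_{\mathrm{top}}(\sigma_A) + t\sup_{\nu\in\frak{M}}\int\phi\,d\nu;
$$
dividing by $t$ and letting $t\to+\infty$ gives $\lim_{t\to+\infty}P(t\phi)/t = \sup_{\nu\in\frak{M}}\int\phi\,d\nu$, which for a convex function coincides with $\lim_{t\to+\infty}P'(t)$, and the case $t\to-\infty$ is symmetric with the infimum. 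Finally, $P(t\phi)$ lies strictly above the line $t\mapsto t\sup_{\nu}\int\phi\,d\nu$ (and likewise for the infimum): at $t=0$ this reads $P(0)=h_{\mathrm{top}}(\sigma_A)>0$, and in general one approximates $\sup_\nu\int\phi\,d\nu$ from below by invariant measures of positive entropy (periodic orbits being dense enough for this purpose), whose terms $h_\mu + t\int\phi\,d\mu$ in the variational principle already exceed $t\sup_\nu\int\phi\,d\nu$ for the given $t$.

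\emph{Main obstacle.} Because the proposition is a compendium of textbook facts there is no conceptual difficulty; the care that is required is in (i) matching the transfer-operator normalization to the definition of pressure, (ii) the monotonicity clause of (4), which is correct only under the sign condition $\int\phi\,d\mu<0$ that holds for our particular potential, and (iii) the strict ``lies above the asymptotes'' clause in (6), which is the single point that genuinely uses $h_{\mathrm{top}}(\sigma_A)>0$ rather than being a formal consequence of convexity and the variational principle.
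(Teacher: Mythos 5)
The paper itself offers no proof of this proposition; it is stated as a compendium of textbook facts with the remark ``this function has been heavily studied; the next statement summarizes some known results; compare \cite{B, Kel, PP, Ru, W, W1}.'' Your proposal therefore takes the same route the authors intend — assembling the claims from the classical thermodynamic formalism via the Ruelle--Perron--Frobenius operator — but supplies the details the paper merely points at. Your treatment of (1)--(5) is correct and well organized, and the observation that the monotonicity clause of (4) is not a formal consequence of the hypotheses as stated, but requires $\int\phi\,d\mu<0$ (true for $\phi=-\log\|DT_\lambda|_{E^u}\|$ since $\Lambda_\lambda$ is hyperbolic), is exactly the right caveat: as written, the proposition would be false for, say, a strictly positive $\phi$ not cohomologous to a constant.

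There is, however, a gap in your argument for the strict-inequality clause of (6). You propose to find, for each fixed $t$, an invariant $\mu$ with $h_\mu + t\int\phi\,d\mu > t\sup_\nu\int\phi\,d\nu =: ts$ by ``approximating the sup by measures of positive entropy.'' But periodic-orbit measures have zero entropy, so the parenthetical is a red herring, and a convex interpolation $(1-\theta)\mu^* + \theta\nu_{\rm max}$ (with $\mu^*$ realizing the sup and $\nu_{\rm max}$ the measure of maximal entropy) gives $h + t\int\phi > ts$ only for $t < h_{\rm top}/(s - \int\phi\,d\nu_{\rm max})$, not for all $t$. A clean repair uses the convexity you have already established: if $\phi$ is cohomologous to a constant, $P(t\phi) - ts \equiv h_{\rm top} > 0$ and there is nothing to prove. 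If not, then $P$ is strictly convex, so $P'(t)=\int\phi\,d\mu_t$ is strictly increasing with $\lim_{t\to\infty}P'(t)=s$, hence $P'(t)<s$ for every finite $t$; thus $g(t):=P(t\phi)-ts$ is strictly decreasing with limit $\lim_{t\to\infty}g(t)\ge 0$ (by the variational principle applied to any $\mu^*$ achieving the sup), so $g(t)>0$ for all finite $t$. The case $t\to-\infty$ with $\inf$ is symmetric.

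As a minor aside, there is an alternative to the analytic-perturbation route for (3)--(5) that aligns more directly with the citation to \cite{PP}: one can deduce $P'(t)=\int\phi\,d\mu_t$ and $P''(t)=\sigma^2_{\mu_t}(\phi)$ from the second-derivative formula for pressure and the CLT for Hölder observables, then obtain strict convexity from Livšic's criterion (vanishing variance iff cohomology to a constant), without invoking spectral perturbation of $\mathcal{L}_{t\phi}$ beyond simplicity of the leading eigenvalue. Either approach is adequate for the present purposes, and in both cases the citation to standard references is appropriate — which is what the paper does.
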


Now let us return to our case where $\sigma_A : \Sigma_A \to \Sigma_A$ is conjugate to $T_\lambda|_{\Lambda_\lambda}$ and the potential is given by $\phi(x) = -\log \|DT_\lambda (x)|_{E^u}\|$ (suppressing the conjugacy). In Section~\ref{s.cohomology} we prove that this potential is not cohomological to a constant. For any $t \in \R$, consider the tangent line to the graph of $P(t)$ at the point $(t, P(t\phi))$. Since $P(t)$ is decreasing, there exists exactly one point of intersection of the tangent line with the $t$-axis, at the point $t_0 = -\frac{h_{\mu_t}}{\int \phi\, d\mu} = \frac{h_{\mu_t}}{Lyap^u\,\mu_t} = \mathrm{dim}_H\mu_t$. The last equality here is due to \cite{Man}. In particular, $\mathrm{dim}_H \mu_\mathrm{max} = \mathrm{dim}_H \nu_\lambda$ is given by the point of intersection of the tangent line to the graph of $P(t)$ at the point $(0, h_{top}(T_\lambda))$ with the $t$-axis. Also, due to Theorem~\ref{t.identities} the line $h_\mathrm{top} (T_\lambda) + t \cdot \inf_{\mu \in \frak{M}} \int \phi \, d\mu$ intersects the $t$-axis at the point $\gamma_\lambda$, and the line $h_\mathrm{top} (T_\lambda) + t \cdot \sup_{\mu \in \frak{M}} \int \phi \, d\mu$ intersects the $t$-axis at the point $\tilde \alpha^\pm_u(\lambda)$. Finally, due to \cite{MM}, the graph of $P(t)$ intersects the $t$-axis at the point $\mathrm{dim}_H\Sigma_\lambda$. These observations are illustrated in Figure~\ref{fig:thermodyn} and explain where the strict inequalities in Theorem~\ref{t.strictinequalities} come from once it is shown that $\phi$ is not cohomological to a constant (we do that in Section \ref{s.cohomology}).

\begin{figure}[t]
\includegraphics[scale=.5]{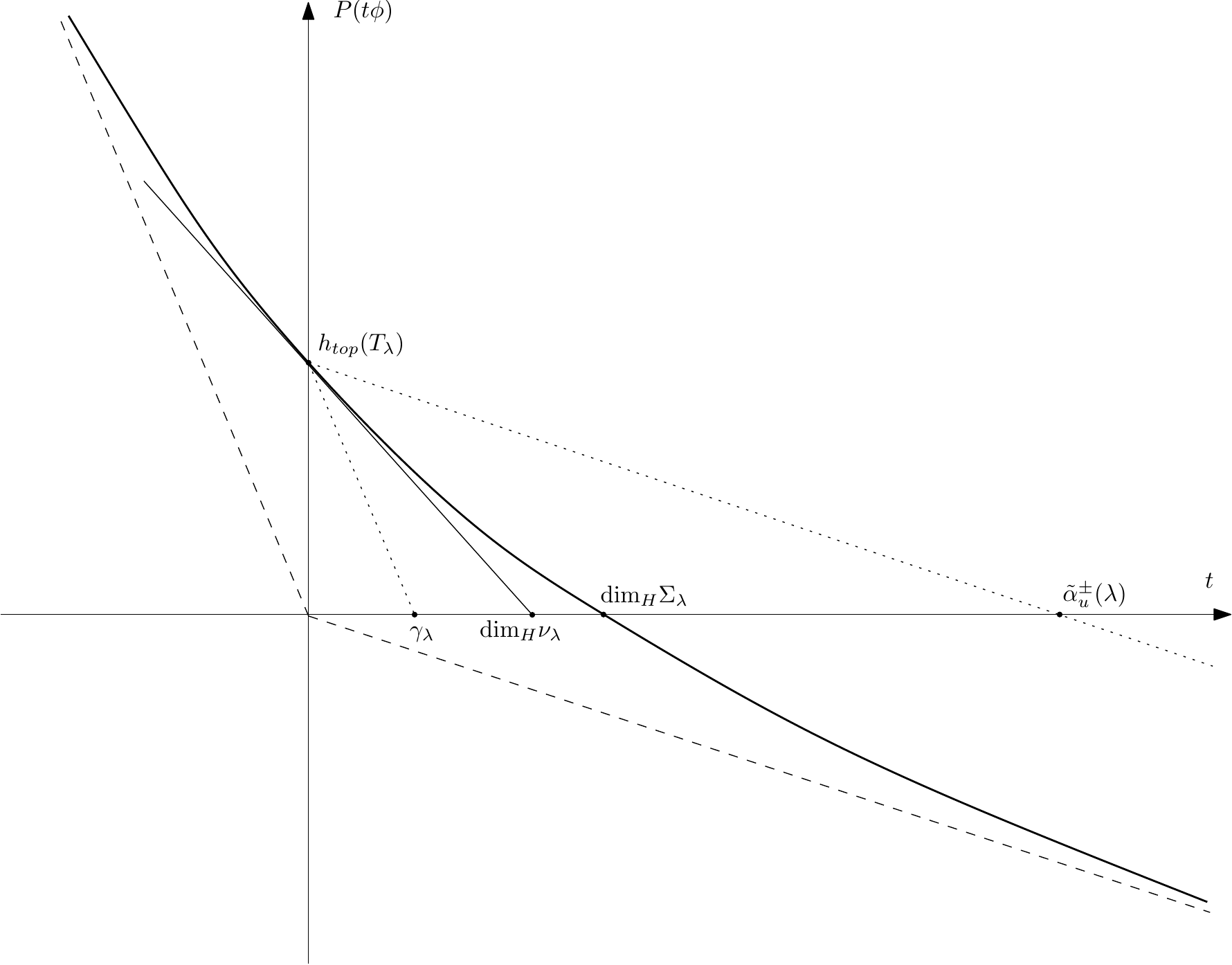}
\caption{Pressure function and spectral characteristics of the Fibonacci Hamiltonian.}
\label{fig:thermodyn}
\end{figure}

\medskip

\subsection{Large Coupling Asymptotics}

For each of the four quantities in question, the large coupling asymptotics are given in the following theorem.

\begin{theorem}\label{t.asymptotics}
We have
\begin{align}
\lim_{\lambda \to \infty} \tilde \alpha^\pm_u(\lambda) \cdot \log \lambda & = 2 \, \log \varphi, \label{e.transportexponentasymptotics} \\
\lim_{\lambda \to \infty} \dim_H \Sigma_\lambda \cdot \log \lambda & = \log (1 + \sqrt{2}) \approx 1.83156 \, \log \varphi, \label{e.spectrumasymptotics} \\
\lim_{\lambda \to \infty} \dim_H \nu_\lambda \cdot \log \lambda & = \frac{5 + \sqrt{5}}{4} \log \varphi \approx 1.80902 \, \log \varphi, \label{e.doesmeasureasymptotics} \\
\lim_{\lambda \to \infty} \gamma_\lambda \cdot \log \lambda & = 1.5 \, \log \varphi \label{e.hoelderexponentasymptotics}.
\end{align}
\end{theorem}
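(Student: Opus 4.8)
The plan is to treat the four limits separately. Three of them are, after inserting the identities of Theorem~\ref{t.identities}, restatements of results already in the literature: \eqref{e.transportexponentasymptotics} is precisely \eqref{e.inftyapproach2}, proved in \cite{DT08}; \eqref{e.hoelderexponentasymptotics} records the large-coupling rate for $\gamma_\lambda$ obtained in \cite{DG13}; and \eqref{e.spectrumasymptotics} is the large-coupling asymptotics of the fractal dimension of the spectrum established in \cite{DEGT} (see also \cite{DT08}). So the real task is to prove the new statement \eqref{e.doesmeasureasymptotics}. By the identity $\dim_H \nu_\lambda = \log\varphi / \mathrm{Lyap}^u \mu_{\lambda,\mathrm{max}}$ in \eqref{e.doesmeasureidentity}, this is equivalent to showing
$$
\lim_{\lambda \to \infty} \frac{\mathrm{Lyap}^u \mu_{\lambda,\mathrm{max}}}{\log\lambda} \;=\; \frac{4}{5+\sqrt5} \;=\; 1 - \frac{1}{\sqrt5}.
$$

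The tool is the large-coupling symbolic model of the trace map: for $\lambda$ large, $T_\lambda|_{\Lambda_\lambda}$ is topologically conjugate to a transitive subshift of finite type $\sigma_A : \Sigma_A \to \Sigma_A$ with $h_\mathrm{top}(\sigma_A) = \log\varphi$ (see \cite{Cas, DG09}). The crucial point is that, transported by this conjugacy, the geometric potential $x \mapsto \log\|DT_\lambda(x)|_{E^u}\|$ is, \emph{uniformly in $\lambda$}, of the form
$$
\log\|DT_\lambda|_{E^u}\| \;=\; (\log\lambda)\, w \;+\; O(1),
$$
where $w : \Sigma_A \to [0,\infty)$ is an explicit locally constant function not depending on $\lambda$ --- it records how many ``$\lambda$-sized'' factors the trace-map cocycle picks up per symbol when the iterates are tracked along Fibonacci blocks. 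This refined control of the unstable multipliers in the large-coupling regime is exactly what underlies the asymptotics of \cite{DT08, DEGT}, and if needed it can be re-derived directly by induction on the Fibonacci blocks. Granting it, one integrates against the measure of maximal entropy $\mu_{\lambda,\mathrm{max}}$, which under the conjugacy corresponds to the ($\lambda$-independent) Parry measure $\mu_A$ of $\sigma_A$, given explicitly in terms of the left and right Perron eigenvectors of $A$:
$$
\mathrm{Lyap}^u \mu_{\lambda,\mathrm{max}} \;=\; \int \log\|DT_\lambda|_{E^u}\| \, d\mu_{\lambda,\mathrm{max}} \;=\; (\log\lambda) \int w \, d\mu_A \;+\; O(1).
$$
Dividing by $\log\lambda$ and letting $\lambda \to \infty$ reduces everything to the single explicit computation $\int w \, d\mu_A = \tfrac{4}{5+\sqrt5}$, a finite calculation with the eigendata of $A$ and the values of $w$. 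The same model also re-proves the other three statements: using that periodic orbit measures are weak-$*$ dense in the simplex of $\sigma_A$-invariant measures, $\inf_{p \in Per(T_\lambda)} \mathrm{Lyap}^u(p) = (\log\lambda)\min_\mu \int w\,d\mu + O(1)$ and $\sup_{p \in Per(T_\lambda)} \mathrm{Lyap}^u(p) = (\log\lambda)\max_\mu \int w\,d\mu + O(1)$ with $\min_\mu \int w\,d\mu = \tfrac12$ and $\max_\mu \int w\,d\mu = \tfrac23$; and, by Proposition~\ref{p.thermodynamic} together with \cite{MM} (which identifies $\dim_H\Sigma_\lambda$ with the zero of $t \mapsto P(t\phi)$) and the uniform estimate above, $\dim_H \Sigma_\lambda$ equals $(\log\lambda)^{-1}$ times the zero of $u \mapsto P(-uw)$, up to $o((\log\lambda)^{-1})$, the latter zero being $\log(1+\sqrt2)$.

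The hard part is the uniform derivative estimate together with the precise identification of $w$ and of the symbolic model: the conjugacy of \cite{Cas, DG09} by itself only produces uniform hyperbolicity constants, not the exact $\log\lambda$-rates, and pinning down the latter requires the sharper large-coupling analysis of the trace map (in the vein of \cite{DT08, DEGT}), including the verification that the error term in the displays above is genuinely $O(1)$ uniformly in $\lambda$, so that it is killed by division by $\log\lambda$. Once $w$ is in hand, the remaining ingredients --- the value of $\int w\,d\mu_A$, the extrema $\min_\mu \int w\,d\mu$ and $\max_\mu \int w\,d\mu$, and the zero of $u \mapsto P(-uw)$ --- are routine computations on a finite-state subshift, and they can be cross-checked against the explicit low-period multipliers of Corollary~\ref{c.alphalowerbounds}: the period-$4$ and period-$6$ families there give $\mathrm{Lyap}^u = \tfrac23\log\lambda + O(1)$ and $\mathrm{Lyap}^u = \tfrac12\log\lambda + O(1)$ respectively, in agreement with $\max_\mu\int w\,d\mu = \tfrac23$ and $\min_\mu\int w\,d\mu = \tfrac12$.
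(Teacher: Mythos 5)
Your overall strategy matches the paper's: all four limits reduce, via Theorem~\ref{t.identities}, to large-$\lambda$ asymptotics of dynamical quantities, three of which are already in the literature, and for the new one \eqref{e.doesmeasureasymptotics} the point is to show $\mathrm{Lyap}^u\mu_{\lambda,\mathrm{max}} = \tfrac{4}{5+\sqrt5}\log\lambda + o(\log\lambda)$. However, the two places where you write ``a finite calculation'' and ``routine computations on a finite-state subshift'' are precisely where the paper's proof has its actual content, and as written you have a genuine gap.

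First, the locally constant potential $w$ is never defined, and the claim $\log\|DT_\lambda|_{E^u}\| = (\log\lambda)\,w + O(1)$ uniformly on $\Lambda_\lambda$ is not something the cited references supply directly. What \cite{DEGT} gives (and the paper uses) is a sandwich $S_l(\lambda)^m \le |x_k'(E_k^{(i)})| \le S_u(\lambda)^m$ at the real zeros $E_k^{(i)}$ of the trace polynomial $x_k$, where $m$ counts how many spectral approximants $\sigma_j$ contain $E_k^{(i)}$. These are derivatives of $x_k$ along the line $\ell_\lambda$, not derivatives of $T_\lambda$ at points of $\Lambda_\lambda$, and the paper needs Proposition~\ref{p.dimnu} (built on Lemmas~\ref{l.nu} and~\ref{l.convnu}) to convert $\mathrm{Lyap}^u\mu_{\lambda,\mathrm{max}}$ into $\lim_k\tfrac{1}{kF_k}\log\prod_i|x_k'(E_k^{(i)})|$ before those estimates can be applied. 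Your sketch bypasses this bridge silently; if you want to argue directly with a potential on $\Sigma_A$ you would need to re-derive the derivative estimates on the horseshoe itself, which is not a cosmetic rewording.

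Second, even granting the potential picture, the identity $\int w\,d\mu_A = \tfrac{4}{5+\sqrt5}$ is exactly the nontrivial combinatorial fact the paper proves as Proposition~\ref{p.combinatoriallimit}: $\lim_k C_k/(kF_k) = \tfrac{4}{5+\sqrt5}$, where $C_k = \sum_m m(a_{k,m}+b_{k,m})$. Establishing this requires the Raymond type-A/type-B band structure (Lemma~\ref{l.order}), the recursions \eqref{e.abkmrecursion}, the derived recursion $C_k = C_{k-1}+C_{k-2}+2F_{k-2}$, and a Fibonacci-continued-fraction estimate to control the remainder $R_k = C_k - \beta kF_k$. This is the heart of the proof of \eqref{e.doesmeasureasymptotics} and cannot be dismissed as routine. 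Your cross-checks against the period-$4$ and period-$6$ multipliers are consistent but only confirm the endpoints $\min_\mu\int w\,d\mu=\tfrac12$ and $\max_\mu\int w\,d\mu=\tfrac23$; they say nothing about the value of the integral against the Parry measure, which sits strictly between them. To complete the argument you should write down $w$ explicitly on the Markov model coming from the $A/B$ band types, verify it is locally constant and $\lambda$-independent, and carry out the Parry-measure integral, which in effect reproduces the paper's combinatorial lemmas.
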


Only \eqref{e.doesmeasureasymptotics} is new here, the other results are stated for completeness and comparison purposes. Indeed, \eqref{e.transportexponentasymptotics} was shown in \cite{DT07, DT08}, \eqref{e.spectrumasymptotics} was shown in \cite{DEGT}, and \eqref{e.hoelderexponentasymptotics} was shown in \cite{DG13}. Thus, our proof of \eqref{e.doesmeasureasymptotics} in this paper completes our understanding of the large coupling asymptotics of the four quantities of interest.

\bigskip

Our results open the door for numerous extensions and generalizations. We briefly discuss some of them in Section~\ref{s.extensions}.

\section{Transversality}\label{sec:trans}

Recall that an invariant closed set $\Lambda$ of a diffeomorphism $f : M \to M$ is \textit{hyperbolic} if there exists a splitting of the tangent space $T_xM=E^u_x\oplus E^u_x$ at every point $x\in \Lambda$ such that this splitting is invariant under $Df$, the differential $Df$ exponentially contracts vectors from the stable subspaces $\{E^s_x\}$, and the differential of the inverse, $Df^{-1}$, exponentially contracts vectors from the unstable subspaces $\{E^u_x\}$. A hyperbolic set $\Lambda$ of a diffeomorphism $f : M \to M$ is \textit{locally maximal} if there
exists a neighborhood $U$ of $\Lambda$ such that
$$
\Lambda=\bigcap_{n\in\Bbb{Z}}f^n(U).
$$
It is known that for $\lambda > 0$, $\Lambda_{\lambda}$ is a locally maximal hyperbolic set of $T_{\lambda} : S_\lambda \to S_\lambda$; see \cite{Can, Cas, DG09}.

Recall from \eqref{e.loic} that we denote the line of initial conditions by $\ell_\lambda$. It is easy to check that $\ell_\lambda \subset S_\lambda$. An energy $E \in \Bbb{R}$ belongs to the spectrum $\Sigma_\lambda$ of the Fibonacci Hamiltonian if and only if the positive semiorbit of the point $(\frac{E-\lambda}{2}, \frac{E}{2}, 1)$ under iterates of the trace map $T$ is bounded; see \cite{S87}.

It is known that the stable manifolds of points in $\Lambda_\lambda$ intersect the line $\ell_\lambda$ transversally if $\lambda > 0$ is sufficiently small \cite{DG09} or if $\lambda \ge 16$ \cite{Cas}. It is also known, based on \cite{Bedford1993}, that if tangential intersections occur in the intermediate regime, they cannot occur at more than finitely many points. This, however, is not sufficient to state uniformly for all values of the coupling constant some of the results that are known to hold in the small and the large coupling regimes. The purpose of this section is to prove that transversality holds for all values of the coupling constant, and some of the immediate consequences; namely, we prove Theorem~\ref{t.transversal}, and its consequences -- Theorems \ref{t.ddcs}, \ref{t.completegaplabeling}, and \ref{t.doesexactdim}.

\begin{proof}[Proof of Theorem \ref{t.transversal}]
In what follows, given a curve $\eta: K\rightarrow K^n$, with $K = \R$ or $K = \C$ and $n\in\N$, $\eta^*$ denotes the image of the curve.

As we have already mentioned, transversality is known for all $\lambda > 0$ sufficiently small. Let us now assume that $\lambda_0 > 0$ is such that for all $\lambda \in (0, \lambda_0)$, transversality holds, while at $\lambda_0$, $\ell_{\lambda_0}\cap W^s(\Lambda_{\lambda_0})$ contains tangential intersections. From \cite{Bedford1993} it is known that such tangencies must be isolated; by compactness of $\ell_\lambda\cap W^s(\Lambda_\lambda)$, there is at most a finite number of such tangencies.

Let $p$ be a point of such a tangency and let $U$ be an open neighborhood of $p$ in $S_{\lambda_0}$ such that all the points of $\ell_{\lambda_0}\cap W^s(\Lambda_{\lambda_0})\cap U$ except $p$ are transversal. Notice that for each $\lambda$, $W^s(\Lambda_\lambda)$ and $\ell_\lambda$ lie on the surface $S_\lambda$. Let us first analytically project the family of laminations $W^s(\Lambda_\lambda)$ and curves $\ell_\lambda$, onto $S_{\lambda_0}$, which will include, respectively, the lamination $W^s(\Lambda_{\lambda_0})$ and the line $\ell_{\lambda_0}$.

Let us consider the complexified surfaces $S_\mathfrak{u}$, $\mathfrak{u}\in \mathcal{U}$, where $\mathcal{U}$ is a small neighborhood of $\lambda_0$ in $\C$. That is,
\begin{align*}
S_\mathfrak{u}\eqdef \set{(x,y,z)\in\C^3: x^2 + y^2 + z^2 - 2xyz - 1 = \frac{\mathfrak{u}^2}{4}}.
\end{align*}
Let us write $\hat{S}_\lambda$ for the complexification of the real surface $S_\lambda$.

By the complex-analytic version of the implicit function theorem, there exists a family of biholomorphisms $\pi(\cdot, \mathfrak{u}): S_\mathfrak{u}\rightarrow \hat{S}_{\lambda_0}$ in a neighborhood of $p$ in $\C^3$ such that $\pi(\cdot,\lambda_0)$ is the identity, $\pi$ depends holomorphically on $\mathfrak{u}$, and for all real $\mathfrak{u}$, $\pi(\cdot, \mathfrak{u})$ maps the real part of $S_\mathfrak{u}$ to $S_{\lambda_0}$.

Now let $O$ be an open neighborhood of $p$ in $\R^3$ and let $U_\lambda = S_{\lambda} \cap O$. Via $\pi(\cdot, \lambda)$, $W^s(\Lambda_\lambda)\cap U_\lambda$ is smoothly projected into $U_{\lambda_0}$. Let us denote the resulting laminations by $\mathcal{F}_\lambda$, and the lamination $W^s(\Lambda_{\lambda_0})\cap U_{\lambda_0}$ by $\mathcal{F}_{\lambda_0}$. By abuse of notation, let us denote the projection of $\ell_\lambda$ via $\pi(\cdot, \lambda)$ by the same symbol, $\ell_\lambda$.

Notice that the laminations $\mathcal{F}_\lambda$ consist of real-analytic leaves (see \cite[Section 5]{Bedford1991}), and can be included into a $C^{1 + \epsilon}$ invariant foliation. Let $\kappa$ be a parameter in the space of leaves of this foliation, such that the leaves of $\mathcal{F}_\lambda$ depend continuously on $\kappa$ in the $C^2$ topology. Moreover, each leaf of $\mathcal{F}_\lambda$ has a canonical continuation in $\lambda$ that depends holomorphically on $\lambda$ (for further details, see \cite[Section 2]{Buzzard2001}).

Let us denote by $\phi_{\lambda}^{(\kappa)}$ the leaves of $\mathcal{F}_{\lambda}$. By $\phi_{\lambda_0}^{(\kappa_0)}\in\mathcal{F}_{\lambda_0}$ we denote the leaf that admits the tangency with $\ell_{\lambda_0}$ at $p$. We will verify that the laminations $\mathcal{F}_\lambda$ satisfy the following properties.

\it
\begin{enumerate}[(i)]

\item The leaves $\phi_{\lambda}^{(\kappa)}$ as well as $\ell_{\lambda}$ admit holomorphic continuations, $\hat{\phi}_{\lambda}^{(\kappa)}$ and $\hat{\ell}_{\lambda}$, respectively, in such a way that for all $\lambda$, all intersections between $\hat{\ell}_{\lambda}$ and $\hat{\phi}_{\lambda}^{(\kappa)}$ are real.

\item For every $\lambda$ in a neighborhood of $\lambda_0$, the lamination $\mathcal{F}_\lambda$ is locally homeomorphic to a product of an interval by a Cantor set.

 \item Let $\gamma$ be a transversal to the lamination $\mathcal{F}_\lambda$. For all $\kappa_1, \kappa_2$, there exists $\Delta(\kappa_1,\kappa_2) > 0$ such that for all $\lambda$ sufficiently close to $\lambda_0$ and the leaves $\phi^{(\kappa_1)}_\lambda, \phi^{(\kappa_2)}_\lambda$ in $\mathcal{F}_\lambda$, the distance along $\gamma$ between $\gamma\cap\phi^{(\kappa_1)}_\lambda$ and $\gamma\cap\phi^{(\kappa_2)}_\lambda$ is not smaller than $\Delta$.

\end{enumerate}

\rm

\begin{proof}[Verification of (i)]
The curves $\ell_\lambda$ are complexified in a natural way (i.e. first complexify the original line of initial conditions, $\ell_\lambda$, before projection by $\pi(\cdot, \lambda)$, and then project). As for the leaves of the foliation: it is known that stable manifolds admit a suitable complexification as complex submanifolds of the complexified invariant surface $\hat{S}_\lambda$(see \cite{Bedford1991}).

To verify that all intersections should be real, we appeal to the argument given by S\"ut\H{o} in \cite{S87}: an energy $E$ belongs to the spectrum if and only if the forward orbit of the corresponding point on the line of initial conditions is bounded under the trace map. In fact, we only need the implication one way: boundedness of the forward orbit implies inclusion in the spectrum. S\"ut\H{o} considered only real values for the energy (the parameter of the line of initial conditions), since the spectrum is real. But the same argument applies verbatim to complex-valued energies.

On the other hand, a point has a bounded forward orbit if and only if it belongs to a stable manifold of $\Lambda_\lambda$ (this is known and has been used since Casdagli's work \cite{Cas}; an explicit proof was given in \cite[Corollary 2.5]{DMY13a}, see also \cite{M14}). Since the spectrum is real and the (complexified) line of initial conditions maps $\R$ into the real subspace of the invariant surface, all intersection points must be real. Now use the fact that $\pi$ preserves the real subspace.
\end{proof}

\begin{proof}[Verification of (ii)]
This holds since the nonwandering set $\Lambda_\lambda$ for the trace map restricted to $S_\lambda$, $\lambda > 0$, is a hyperbolic horseshoe (see \cite{Can, Cas, DG09}).
\end{proof}

\begin{proof}[Verification of (iii)]
This follows from a compactness argument (the lamination depends continuously on $\lambda$; restrict $\lambda$ to some compact interval around $\lambda_0$, and note that, by definition, no two distinct leaves of the lamination $\mathcal{F}_{\lambda_0}$ intersect).
\end{proof}

We will need the following simple lemma, stated here without proof, that could be derived from, for example, \cite[Theorem 1.14]{Ilyashenko2008}.

\begin{lemma}\label{lem:prelim-1}
Suppose that $\phi, \ell: \R\rightarrow\R^2$ are real-analytic, admiting complex-analytic continuations $\hat{\phi}, \hat{\ell}: \C\rightarrow\C^2$, such that $\hat{\phi}$ and $\hat{\ell}$ are injective immersions. Suppose further that at some $p\in\C$, $\hat{\phi}^*$ is tangent to $\hat{\ell}^*$ at $p$ and this tangency is isolated. Then there exists an open neighborhood $U$ of $p$ in $\C^2$ and a biholomorphism $\zeta: U\rightarrow \mathbb{D}^2=\mathbb{D}\times\mathbb{D}$ with $\mathbb{D}$ being the unit disc centered at the origin in $\C$, with the following properties.

\begin{enumerate}

\item $\zeta(p) = (0,0)$.

\item $\zeta(\Re(U)) \subseteq \Re(\mathbb{D}^2)$.

\item $\zeta$ maps the connected component of $\hat{\ell}^*\cap U$ that contains $p$ onto $\mathbb{D}$.

\item There exists a holomorphic function $f:\mathbb{D}\rightarrow\C$, such that the image of the connect component of $\hat{\phi}^*\cap U$ that contains $p$ is the graph of $f$.

\end{enumerate}

\end{lemma}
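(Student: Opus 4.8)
The plan is to build $\zeta$ as a composition of three biholomorphisms, each chosen so as to commute with complex conjugation: one that straightens $\hat\ell^*$ to a coordinate axis through $p$, one that then exhibits $\hat\phi^*$ as a graph over that axis using the tangency hypothesis, and finally a real diagonal rescaling turning the resulting polydisc into $\mathbb{D}^2$. Throughout I take $p$ to be real, which is the only case used later (by Verification (i)); if $p$ were not real one would shrink $U$ so that $\Re(U) = \emptyset$, rendering property (2) vacuous, and run the same construction without the reality constraints on the auxiliary data.

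First I would straighten $\hat\ell$. Let $t_0$ satisfy $\hat\ell(t_0) = p$; since $\hat\ell$ is the complexification of a real-analytic curve, $\hat\ell(\bar t) = \overline{\hat\ell(t)}$, so injectivity together with the reality of $p$ forces $t_0 \in \R$, and then $\hat\ell'(t_0) = \ell'(t_0) \in \R^2 \setminus \{0\}$ (using also that $\hat\ell$ is an immersion). Pick $v \in \R^2$ with $\hat\ell'(t_0), v$ linearly independent over $\R$ (hence a basis of $\C^2$) and set $\Psi(t,s) \eqdef \hat\ell(t_0 + t) + s v$ near $(0,0)$, so that $\Psi(0,0) = p$ and $D\Psi(0,0)$ is invertible; thus $\Psi$ is a local biholomorphism, and since $t_0, \hat\ell, v$ are real, $\Psi$ and hence $\Psi^{-1}$ commute with conjugation and send real points to real points. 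In the coordinates given by $\Psi^{-1}$ (write a point as $(w,s)$), the point $p$ is the origin, the component of $\hat\ell^*$ through $p$ agrees with $\{s=0\}$ near $p$, and $\hat\phi^*$ becomes the image of the holomorphic curve $\hat\psi(\tau) \eqdef \Psi^{-1}(\hat\phi(\tau)) = (A(\tau), B(\tau))$, where $\hat\phi(\tau_0) = p$ and $A(\tau_0) = B(\tau_0) = 0$.

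Next I would read off the graph and rescale. Differentiating $\Psi(t,0) = \hat\ell(t_0+t)$ at $t=0$ gives $D\Psi(0,0)^{-1}\hat\ell'(t_0) = (1,0)$; since $\hat\phi^*$ is tangent to $\hat\ell^*$ at $p$, the vector $\hat\phi'(\tau_0)$ is a nonzero multiple of $\hat\ell'(t_0)$, so $\hat\psi'(\tau_0)$ is a nonzero multiple of $(1,0)$, i.e. $B'(\tau_0) = 0$ and $A'(\tau_0) \neq 0$. Hence $A$ is a local biholomorphism near $\tau_0$, and reparametrizing by $w = A(\tau)$ presents the component of $\hat\phi^*$ through the origin as the graph of $f \eqdef B \circ A^{-1}$, holomorphic near $0$ with $f(0) = 0$ (and, for later use, $f'(0) = B'(\tau_0)/A'(\tau_0) = 0$). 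I would then shrink to a polydisc $\{|w| < r_1,\ |s| < r_2\}$ inside the chart, small enough that there the component of $\hat\ell^*$ through $p$ is exactly $\{s=0\}$, the component of $\hat\phi^*$ through $p$ is exactly the graph of $f$ over $\{|w| < r_1\}$, and $|f| < r_2$ on that disc, and post-compose with $(w,s) \mapsto (w/r_1, s/r_2)$. Letting $U$ be the preimage of $\mathbb{D}^2$ under the resulting composite $\zeta$, one checks: $\zeta$ is a biholomorphism of $U$ onto $\mathbb{D}^2$ with $\zeta(p) = (0,0)$; it commutes with conjugation (a composite of conjugation-equivariant and real affine maps), so $\zeta(\Re(U)) \subseteq \Re(\mathbb{D}^2)$; it carries the component of $\hat\ell^* \cap U$ through $p$ onto $\mathbb{D} \times \{0\}$; and it carries the component of $\hat\phi^* \cap U$ through $p$ onto the graph over $\mathbb{D}$ of $w \mapsto f(r_1 w)/r_2$. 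That is exactly (1)--(4).

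The geometry here is routine and could alternatively be extracted from \cite{Ilyashenko2008}; the only point I expect to require care is keeping every coordinate change equivariant under complex conjugation so that (2) survives — this is why $v$, the centering translation, and the rescaling are all taken real, and it is the single place where the real-analyticity of $\phi$ and $\ell$ (rather than mere holomorphy) is used. The hypothesis that the tangency is isolated is not actually needed in the argument above; it only records that $\hat\phi^*$ and $\hat\ell^*$ do not coincide near $p$ (equivalently $f \not\equiv 0$), which is what makes the lemma useful in the finiteness-of-tangencies argument that follows.
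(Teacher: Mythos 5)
Your construction is correct, and there is no in-paper argument to compare against: the paper states this lemma without proof, deferring to \cite[Theorem~1.14]{Ilyashenko2008}. Your three-step decomposition --- the conjugation-equivariant straightening $\Psi(t,s)=\hat\ell(t_0+t)+sv$ with $v$ real, the tangency computation $A'(\tau_0)\neq 0$, $B'(\tau_0)=0$ producing the graph $f=B\circ A^{-1}$, and a real diagonal rescaling to $\mathbb{D}^2$ --- is the natural elementary proof, and you correctly isolate the one genuinely delicate point, namely keeping every coordinate change equivariant under complex conjugation so that property (2) survives; this is indeed where real-analyticity of $\phi,\ell$ (rather than mere holomorphy) enters, and where one uses that the tangency point $p$ is real (which, as you note, is the only case used in Verification~(i)). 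One step you assert but leave implicit and that would need a sentence if written out: the claim that the connected component of $\hat\ell^*\cap U$ through $p$ is \emph{exactly} the segment $\{s=0\}\cap U$, and the component of $\hat\phi^*\cap U$ through $p$ \emph{exactly} the graph, requires shrinking $U$ so that it avoids $\hat\ell(\partial D(t_0,\rho))$ and $\hat\phi(\partial D(\tau_0,\rho'))$ --- compact sets missing $p$ by injectivity --- which blocks distant pieces of the curves from connecting into those components. Finally, your observation that the isolated-tangency hypothesis is not used to establish (1)--(4) but only guarantees $f\not\equiv 0$ (which is what the Hurwitz argument in Lemma~\ref{lem:prelim-2} actually needs) is accurate.
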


We will consider separately the case when the tangency at $p$ is quadratic, and when it is of higher order.

\begin{lemma}\label{lem:prelim-2}
If the tangency at $p$ between $\hat{\phi}^{(\kappa_0)}_{\lambda_0}$ and $\hat{\ell}_{\lambda_0}$ is of order greater than two, then there exists $\lambda\in(0, \lambda_0)$ such that $\ell_{\lambda}$ contains a point of tangency with some leaf of the lamination $\mathcal{F}_{\lambda}$.
\end{lemma}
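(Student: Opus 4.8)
The plan is to exploit the holomorphic dependence on $\lambda$ of both the line $\hat\ell_\lambda$ and the leaf $\hat\phi_\lambda^{(\kappa_0)}$ (property (i) and the canonical continuation mentioned above), and to track what a higher-order tangency at $\lambda_0$ must ``unfold into'' as $\lambda$ decreases past $\lambda_0$. First I would apply Lemma~\ref{lem:prelim-1} at the point $p$ and the parameter value $\lambda_0$: in the resulting coordinates $\zeta$, the line $\hat\ell_{\lambda_0}$ becomes $\mathbb{D}\times\{0\}$ (the first factor) and the leaf $\hat\phi_{\lambda_0}^{(\kappa_0)}$ becomes the graph of a holomorphic $f_{\lambda_0}:\mathbb{D}\to\C$ with $f_{\lambda_0}(0)=0$ and $f_{\lambda_0}'(0)=0$, and since the tangency has order $k > 2$, in fact $f_{\lambda_0}(w) = c\,w^k + O(w^{k+1})$ with $c\neq 0$. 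Because everything varies holomorphically in $\lambda$, for $\lambda$ near $\lambda_0$ the leaf $\hat\phi_\lambda^{(\kappa_0)}$ is still a graph of a holomorphic function $f_\lambda$, and the intersection points of $\hat\ell_\lambda$ with $\hat\phi_\lambda^{(\kappa_0)}$ are precisely the zeros of $w\mapsto f_\lambda(w)$ in $\mathbb{D}$ (after possibly shrinking to a slightly smaller disc and using that no zeros escape to the boundary). A tangency occurs exactly when $f_\lambda$ has a multiple zero.

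Next I would count zeros with multiplicity via the argument principle: for a small fixed circle $|w| = r$, the number of zeros of $f_\lambda$ inside is $k$ for all $\lambda$ sufficiently close to $\lambda_0$, since $f_{\lambda_0}$ has a zero of order exactly $k$ at the origin and zeros depend continuously on the parameter. Now I use property (i): for \emph{real} $\lambda$, every intersection of $\hat\ell_\lambda$ with $\hat\phi_\lambda^{(\kappa)}$ is real, i.e.\ every zero of $f_\lambda$ in this disc is real. For $\lambda \in (0,\lambda_0)$ transversality holds, so all $k$ of these real zeros are simple. Hence for $\lambda \in (0,\lambda_0)$ close to $\lambda_0$, the polynomial-like map $f_\lambda$ has $k$ distinct real simple zeros inside the disc. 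As $\lambda \uparrow \lambda_0$ these $k$ simple real zeros must collide to form the single zero of order $k$ of $f_{\lambda_0}$. But $k \ge 3$ real simple zeros colliding is not an issue by itself; the point is the \emph{reality constraint combined with a dimension count}: the discriminant (or the resultant of $f_\lambda$ and $f_\lambda'$) is a holomorphic function of $\lambda$ that vanishes at $\lambda_0$ to order at least $2$ when $k\ge 3$ (a zero of multiplicity $k$ contributes $k-1\ge 2$ to the order of vanishing of the discriminant), whereas near $\lambda_0$, on the real axis, the configuration of $k$ simple real zeros on one side forces — by counting sign changes of the discriminant along the real $\lambda$-axis, or equivalently by the fact that $k$ simple real roots cannot all merge at a single real parameter value without the multiplicity-$k$ degeneracy being approached through configurations that themselves contain tangencies — that a tangency (double real zero) must occur for some $\lambda < \lambda_0$. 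Concretely, I would argue: restrict attention to real $\lambda \in (\lambda_0 - \delta, \lambda_0)$; for such $\lambda$, $f_\lambda|_{\mathbb{R}\cap\mathbb{D}}$ is a real-analytic function with $k$ simple zeros, but as $\lambda \to \lambda_0$ the $k$ zeros converge to $0$, so for $\lambda$ slightly less than $\lambda_0$ the $k$ zeros are crammed into a tiny real interval around $0$; applying Rolle's theorem to $f_\lambda$ and then examining $f_\lambda'$, which has $k-1$ real zeros interlacing those of $f_\lambda$, together with the holomorphic count showing $f_\lambda$ has exactly $k$ zeros (all real) in the disc for all nearby $\lambda$ — the genuine obstruction is to show that somewhere in the transition the local picture is forced to pass through a parameter where two zeros of $f_\lambda$ coincide.

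The honest way to close this is the following clean argument, which I would carry out: consider the set of real $\lambda \in (0,\lambda_0]$ for which $f_\lambda$ has $k$ \emph{distinct} real zeros in the disc versus those for which it does not. At $\lambda_0$ it has one zero of multiplicity $k$ (not $k$ distinct ones). For $\lambda$ just below $\lambda_0$, since transversality holds, it has $k$ distinct real zeros (all simple). So the function $\lambda \mapsto (\text{number of distinct real zeros of } f_\lambda \text{ in the disc})$ jumps from $k$ to $1$ as $\lambda$ increases to $\lambda_0$. But this number can only change at a parameter where two zeros collide, i.e.\ at a tangency — \emph{and by the isolatedness of tangencies (\cite{Bedford1993}) together with property (i) all collisions happen at real $\lambda$}. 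If no tangency occurred for any $\lambda\in(0,\lambda_0)$, the number of distinct real zeros would be locally constant on $(0,\lambda_0)$, hence equal to $k$ on a punctured left-neighborhood of $\lambda_0$; by continuity of the zero set in the Hausdorff sense this is consistent, but then the limiting configuration at $\lambda_0$ would have to be the limit of $k$-point configurations, which — since all $k$ points converge to the single point $0$ — means precisely that for $\lambda$ slightly below $\lambda_0$, two of these $k$ nearby real zeros are arbitrarily close; taking the infimum over the (relatively closed, by isolatedness) tangency set in $(0,\lambda_0]$ and using that $\lambda_0$ itself is a tangency point that is a limit of parameters with $k$ distinct real simple zeros, we get that the tangency set accumulates at $\lambda_0$ from the left, so in particular it is nonempty in $(0,\lambda_0)$. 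That contradicts the standing assumption that transversality holds for all $\lambda\in(0,\lambda_0)$, unless we instead conclude — as the lemma asserts — that there \emph{is} such a $\lambda\in(0,\lambda_0)$.

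The main obstacle I anticipate is making the ``$k$ distinct real zeros must pass through a coincidence'' step fully rigorous while respecting that the leaf $\hat\phi_\lambda^{(\kappa_0)}$ is only the canonical continuation of one particular leaf — one must make sure that the holomorphic family $f_\lambda$ is well-defined on a fixed disc for $\lambda$ in a full complex neighborhood of $\lambda_0$ (this is where \cite{Buzzard2001} and the $C^{1+\epsilon}$ foliation with holomorphic continuation of leaves is used), and that the count ``exactly $k$ zeros in the disc, all real for real $\lambda$'' is stable. Once that setup is in place, the zero-counting argument via the argument principle plus reality is essentially forced. I would expect the write-up to hinge on a careful statement of ``isolated tangency of order $k$ $\Rightarrow$ discriminant vanishes to order $\ge k-1 \ge 2$'' and on the observation that, along the real parameter line, $k$ simple real zeros converging to a common point cannot do so without an intermediate coalescence when $k\ge 3$ — equivalently, $f_\lambda'$ must itself acquire a zero of $\hat\ell_\lambda$-type, giving the desired tangency at some nearby $\lambda < \lambda_0$.
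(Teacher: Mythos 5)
Your central step is the assertion that ``$k$ simple real zeros converging to a common point cannot do so without an intermediate coalescence when $k\ge 3$,'' and this is false. Consider $f_\lambda(w)=w^3-(\lambda_0-\lambda)w$: for every $\lambda<\lambda_0$ it has the three distinct simple real zeros $0,\pm\sqrt{\lambda_0-\lambda}$, which all merge into a single triple zero at $\lambda=\lambda_0$, and its discriminant $4(\lambda_0-\lambda)^3$ is strictly positive for all $\lambda<\lambda_0$, so there is no intermediate parameter at which a multiple zero appears. The reality constraint from property (i) is fully respected by this example, so nothing in the argument principle plus the ``all zeros real'' observation rules it out. The related discriminant heuristic in your write-up (vanishing to order $\ge k-1\ge 2$ at $\lambda_0$, sign changes along the real $\lambda$-axis) likewise gives no contradiction: a high-order zero of the discriminant at $\lambda_0$ is perfectly consistent with the discriminant being nonzero on a deleted neighborhood. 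In short, tracking the single leaf $\hat\phi^{(\kappa_0)}_\lambda$ alone cannot produce the desired tangency, and the final paragraph of your proposal, which tries to close the gap by asserting that the tangency set must accumulate at $\lambda_0$ from the left, is circular.

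The paper's proof avoids this by introducing a \emph{second} leaf $\phi^{(\alpha)}_\lambda$ of the lamination $\mathcal{F}_\lambda$ and using property (iii), which you never invoke. The mechanism is: since $k\ge 3$, between consecutive real zeros of $f^{(\kappa_0)}_\lambda$ the graph makes at least one excursion above and one below the axis, on intervals $J_\lambda^{(1)},J_\lambda^{(2)}$ whose lengths and whose amplitudes $M_\lambda^{(i)}$ both tend to $0$ as $\lambda\nearrow\lambda_0$. Meanwhile property (iii) keeps a nearby leaf $f^{(\alpha)}_\lambda$ at a uniform distance $\Delta_0>0$ from $f^{(\kappa_0)}_\lambda$ over those intervals. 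Comparing the sign of $f^{(\alpha)}_\lambda$ on $J_\lambda^{(i)}$ at a fixed $\lambda'<\lambda_0$ with its sign for $\lambda$ close to $\lambda_0$ (where the shrinking bump forces $f^{(\alpha)}_\lambda$ to one side), an intermediate-value argument in $\lambda$ produces a parameter at which $f^{(\alpha)}_\lambda$ touches zero at an endpoint, i.e.\ a tangency of $\ell_\lambda$ with the leaf $\phi^{(\alpha)}_\lambda$. That second leaf, together with the uniform transverse separation from (iii), is the missing ingredient; the hypothesis $k>2$ is used precisely to guarantee the two oppositely-signed shrinking bumps, not to force a self-coalescence of the zeros of one function.
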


\begin{proof}
Let us assume that the tangency at $p$ between $\hat{\phi}^{(\kappa_0)}_{\lambda_0}$ and $\hat{\ell}_{\lambda_0}$ is of order $k > 2$. Let $\zeta_0$ be a rectifying biholomorphism as in Lemma \ref{lem:prelim-1}. Then in a neighborhood of $p$, $\zeta_0$ maps $\hat{\ell}_{\lambda_0}^*$ onto $\mathbb{D}$ and $\zeta_0(\hat{\phi}_{\lambda_0}^{(\kappa_0)})$ is the graph of a holomorphic function over $\mathbb{D}$. Let us denote this function by $\hat{f}^{(\kappa_0)}_{\lambda_0}$ and its restriction onto $\R$ by $f^{(\kappa_0)}_{\lambda_0}$. Then $f_{\lambda_0}^{(\kappa_0)}$ has a root of multiplicity $k$ at the origin.

Now by holomorphic dependence on $\lambda$ of each leaf of the family $\mathcal{F}_\lambda$, as well as of $\hat{\ell}_\lambda$, we can construct a family of rectifying biholomorphisms $\set{\zeta_\lambda}$ depending holomorphically on $\lambda$ for every $\lambda$ sufficiently close to $\lambda_0$ with the same properties as $\zeta_0$ (less the tangency).

Thus $\zeta_\lambda(\mathcal{F}_\lambda)$ gives a family of laminations in a neighborhood of the origin in $\R^2$, such that for each leaf $\phi_\lambda^{(\kappa)}\in\zeta_\lambda(\mathcal{F}_\lambda)$, $\hat{\phi}_{\lambda}^{(\kappa)}$ is given as the graph of an analytic funcion $\hat{f}_\lambda^{(\kappa)}$ over $\mathbb{D}$. Notice that the resulting functions $f_\lambda^{(\kappa)}$ depend holomorphically on $\lambda$ and continuously on $\kappa$ in the $C^2$ topology. In particular, $\hat{f}^{(\kappa_0)}_\lambda\rightarrow\hat{f}^{(\kappa_0)}_{\lambda_0}$ uniformly as $\lambda\rightarrow \lambda_0$.

Hurwitz's theorem implies that for every $\lambda$ sufficiently close to $\lambda_0$, $\hat{f}_\lambda^{(\kappa_0)}$ has precisely $k > 2$ zeros (counting multiplicity) in a neighborhood of the origin, and these zeros approach the origin as $\lambda\rightarrow \lambda_0$. Since, by our standing assumptions, for $\lambda < \lambda_0$ these zeros form transverse intersections, they all must be simple. Thus when $\lambda < \lambda_0$, there are precisely $k > 2$ distinct zeros of $\hat{f}_\lambda^{(\kappa_0)}$ in a neighborhood of the origin which approach the origin as $\lambda\nearrow\lambda_0$. Furthermore, by our assumptions, these zeros are all real.

Since the biholomorphism $\zeta_\lambda$ maps $\R^2$ to $\R^2$, we obtain a family of analytic functions $f_{\lambda}^{(\kappa)}: J\rightarrow \R$ over an open interval $J\subset \R$ with $0\in J$, with the following properties.




Since $k\geq 3$, for all $\lambda < \lambda_0$ and sufficiently close to $\lambda_0$, there exist two nondegenerate compact intervals, $J_\lambda^{(1)}$ and $J_\lambda^{(2)}$ with disjoint interiors, such that the endpoints of each are given by zeros of $f_\lambda^{(\kappa_0)}$, on the interior of $J_\lambda^{(1)}$, $f_\lambda^{(\kappa_0)} < 0$ and on the interior of $J_\lambda^{(2)}$, $f_\lambda^{(\kappa_0)} > 0$, and $\abs{J_\lambda^{(i)}}\rightarrow 0$ as $\lambda\nearrow \lambda_0$, $i = 1, 2$.

By continuity, the derivative of $f_\lambda^{(\kappa_0)}$ is bounded uniformly in $\lambda$ on the interval $J$. In particular it follows that if $M_\lambda^{(i)}$ denotes the maximum of $\abs{f_\lambda^{(\kappa_0)}}$ over $J_\lambda^{(i)}$, then $M_\lambda^{(i)}\rightarrow 0$ as $\lambda\nearrow\lambda_0$.

\begin{figure}[t]
 \centering
   \subfigure[{Some $\lambda\in [\lambda', \lambda_1)$}]{
     \includegraphics[scale=.45]{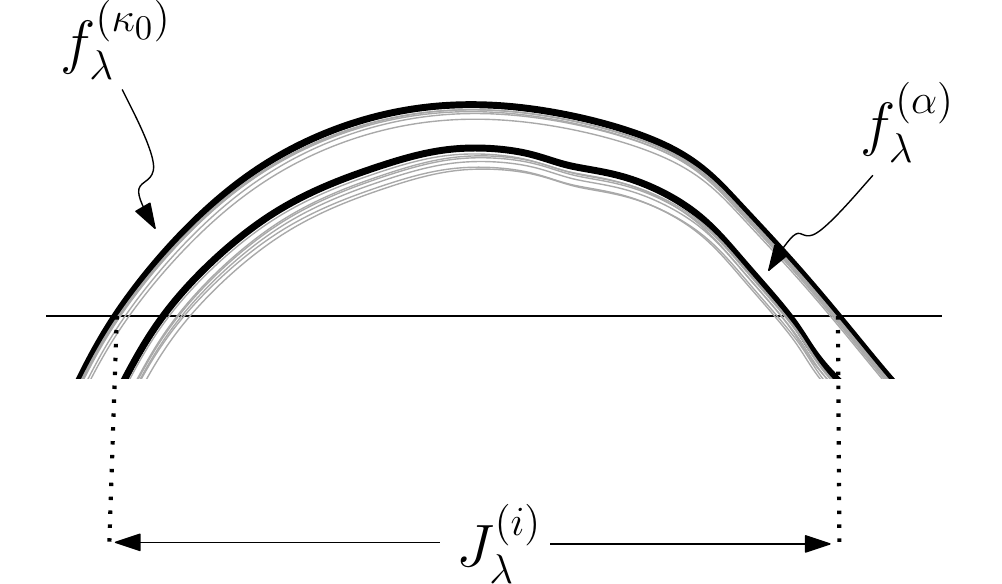}}
   \subfigure[{Some $\lambda\in (\lambda_2, \lambda_0)$}]{
     \includegraphics[scale=.45]{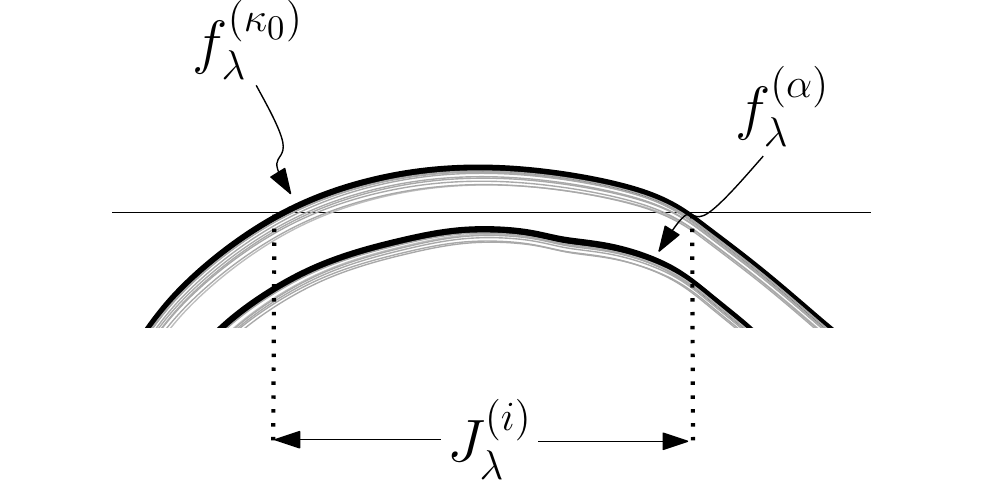}}
 \caption{}
 \label{fig:schemas1-2}
\end{figure}

\begin{figure}[t]
 \centering
   \subfigure[{$\lambda = \lambda_0$}]{
     \includegraphics[scale=.4]{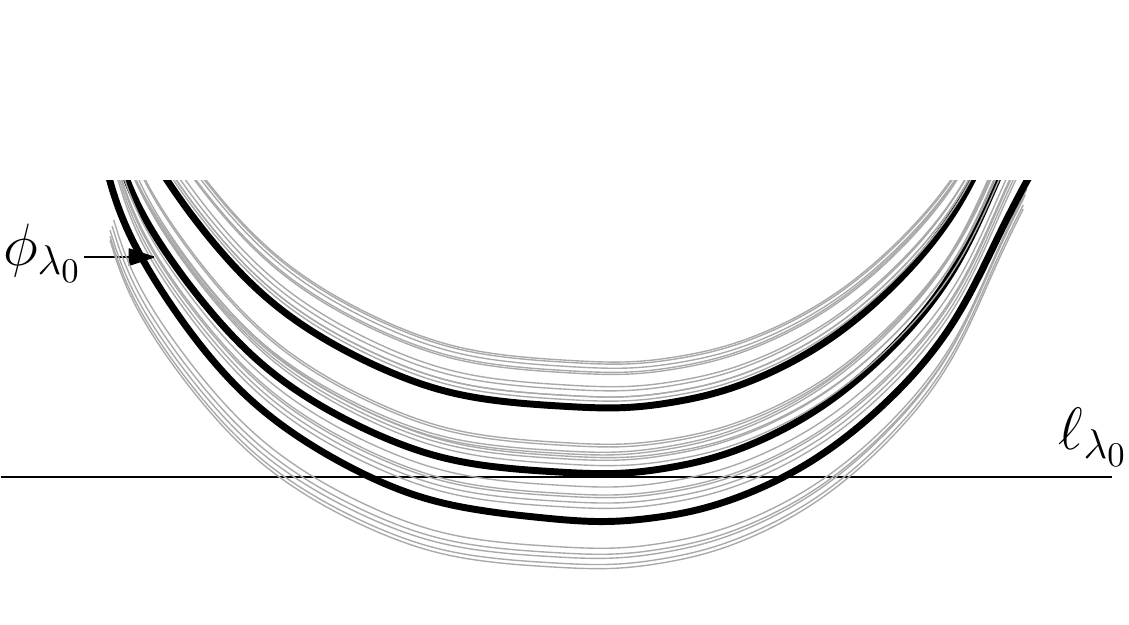}}
   \subfigure[{$\lambda < \lambda_0$}]{
     \includegraphics[scale=.4]{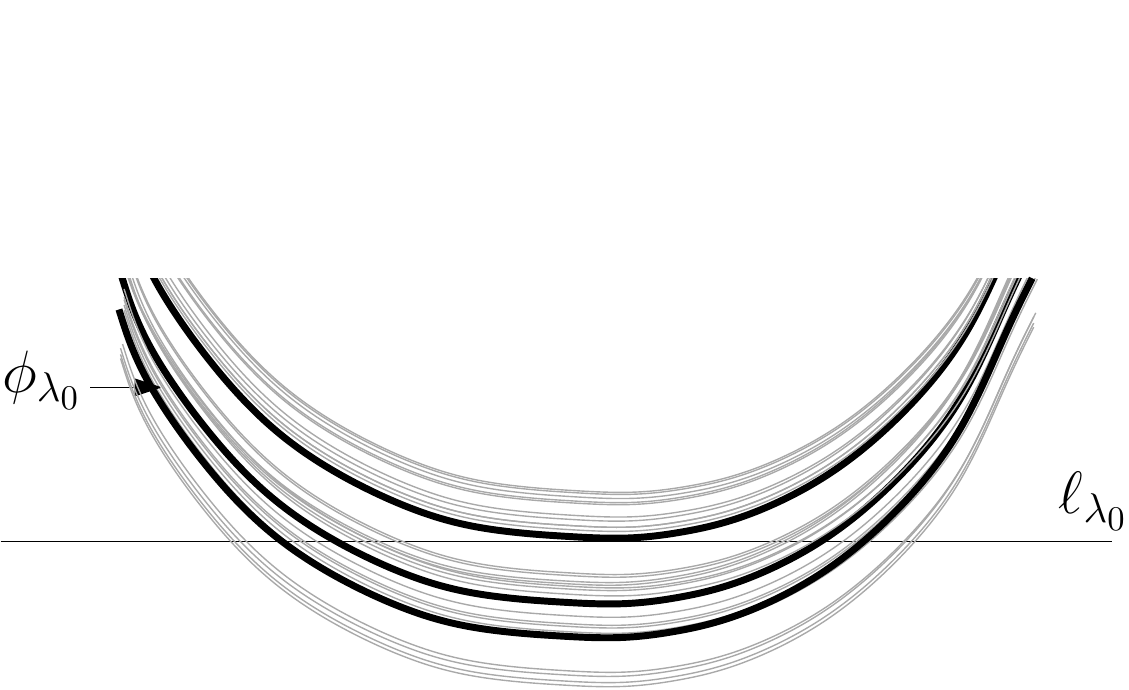}}\\
   \subfigure[{$\lambda = \lambda_0$}]{
     \includegraphics[scale=.4]{schema3.pdf}}
   \subfigure[{$\lambda < \lambda_0$}]{
     \includegraphics[scale=.4]{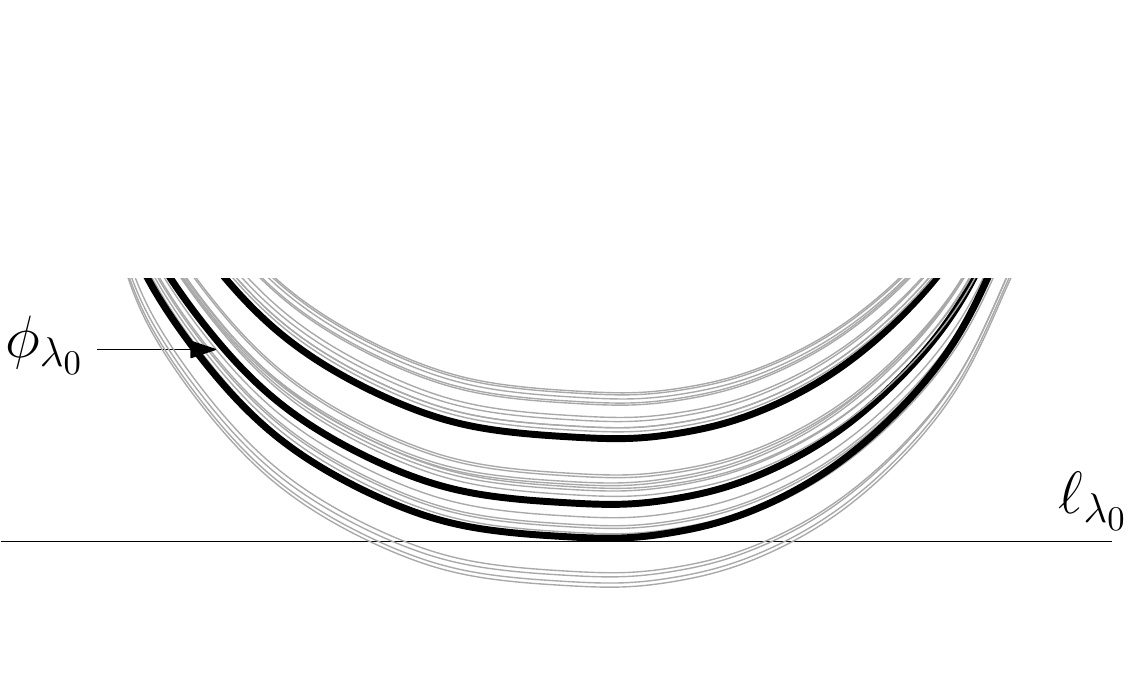}}\\
   \subfigure[{$\lambda = \lambda_0$}]{
     \includegraphics[scale=.4]{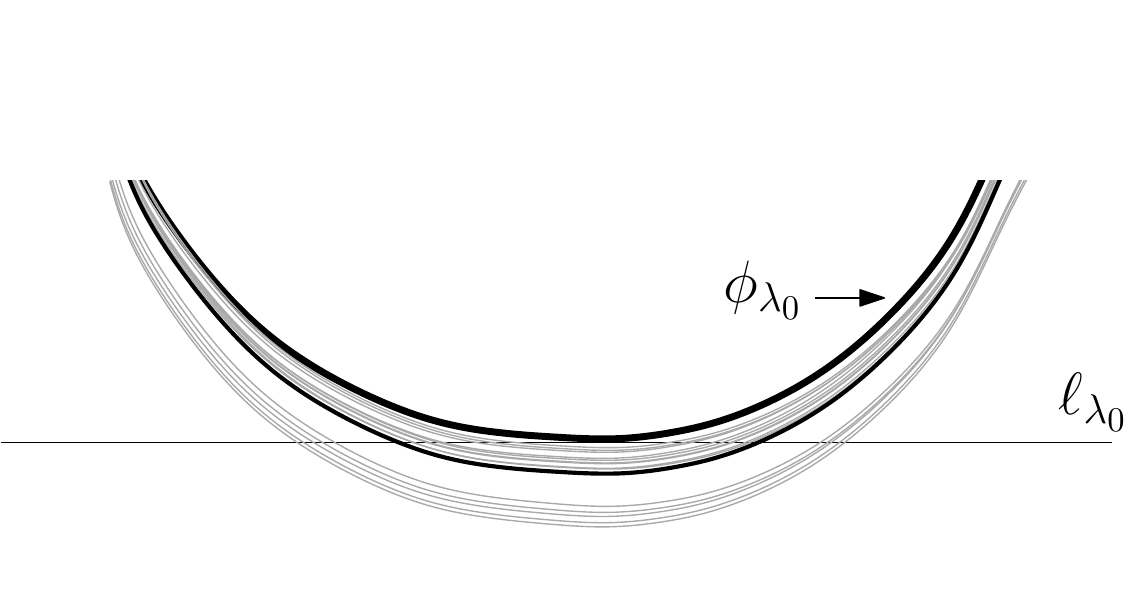}}
   \subfigure[{$\lambda < \lambda_0$}]{
     \includegraphics[scale=.4]{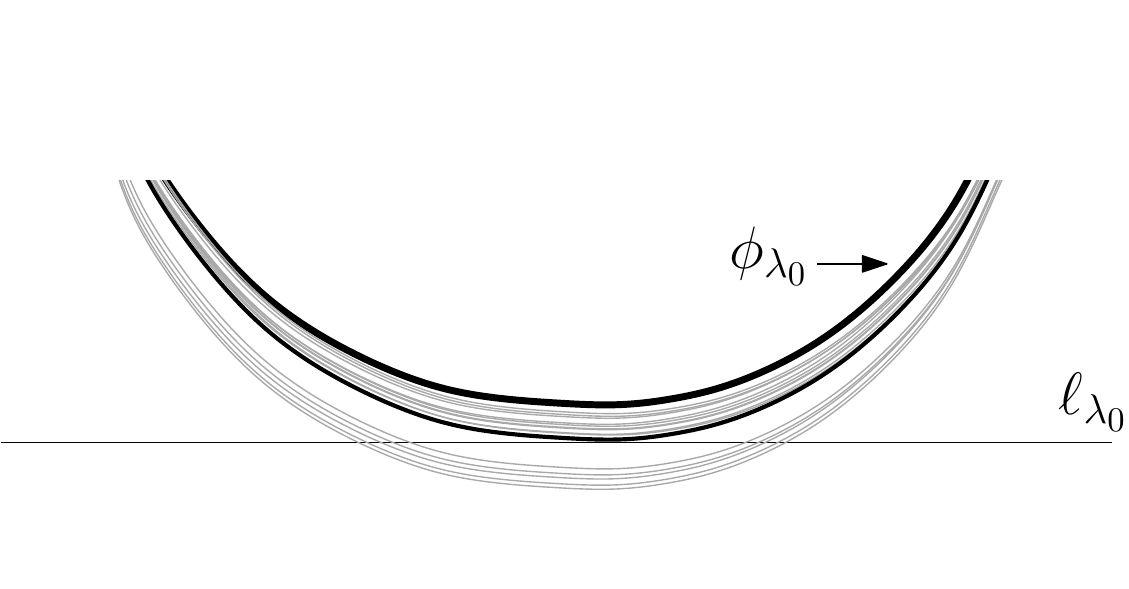}}\\
   \subfigure[{$\lambda = \lambda_0$}]{
     \includegraphics[scale=.4]{schema6.pdf}}
   \subfigure[{$\lambda < \lambda_0$}]{
     \includegraphics[scale=.4]{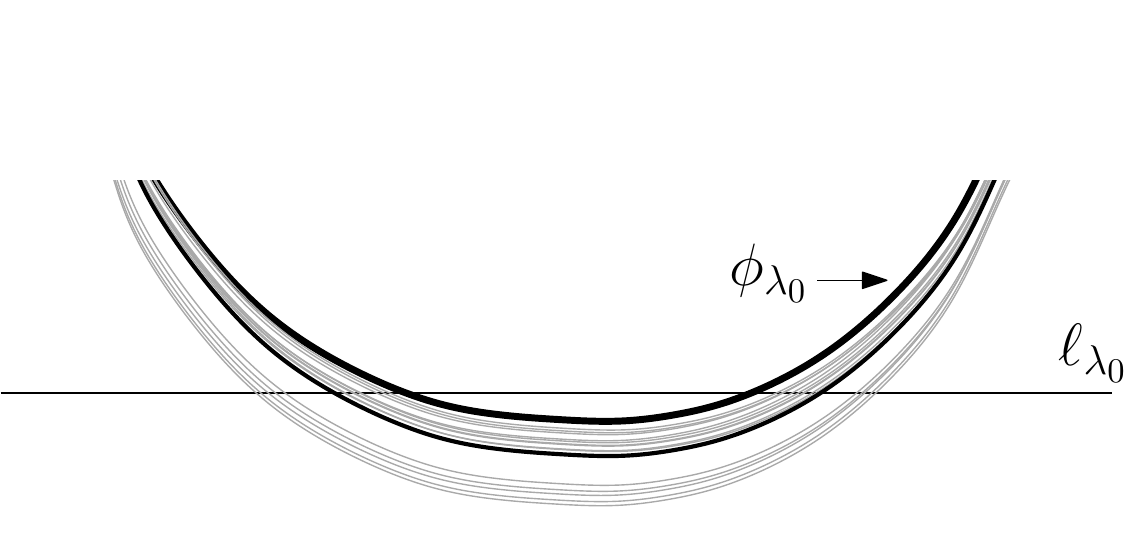}}\\
 \caption{}
 \label{fig:schemas3-8}
\end{figure}

Now fix some $\lambda' < \lambda_0$ sufficiently close to $\lambda_0$ as above. There exists $\alpha$ among the parameters $\kappa$ such that if $f_\lambda^{(\alpha)}$ is the continuation of $f_{\lambda_0}^{(\alpha)}$, then we have the following. By assumption (iii), there exists $\Delta_0 > 0$ such that for all $\lambda\in [\lambda', \lambda_0]$, we have $\abs{f_\lambda^{(\kappa_0)}-f_\lambda^{(\alpha)}} > \Delta_0$ on the interval $J_\lambda^{(i)}$. Furthermore, there exists $i\in\set{1, 2}$ such that for all $\lambda\in[\lambda', \lambda_0)$, either $f_\lambda^{(\kappa_0)}$ is negative on the interior of $J_\lambda^{(i)}$ and $f_\lambda^{(\alpha)} > f_\lambda^{(\kappa_0)}$ on $J_\lambda^{(i)}$, or $f_\lambda^{(\kappa_0)}$ is positive on the interior of $J_\lambda^{(i)}$ and $f_\lambda^{(\alpha)} < f_\lambda^{(\kappa_0)}$ on $J_\lambda^{(i)}$. Let us consider the latter case, the former being completely similar.

It follows that there exists $\lambda_1< \lambda_2 \in (\lambda', \lambda_0)$ such that for all $\lambda\in [\lambda', \lambda_1)$, the maximum of $f_\lambda^{(\alpha)}$ over $J_\lambda^{(i)}$ is positive and for all $\lambda\in(\lambda_2, \lambda_0)$, the maximum of $f_\lambda^{(\alpha)}$ over $J_\lambda^{(i)}$ is negative. As a result, there exists $\lambda\in (\lambda', \lambda_0)$ such that $f_\lambda^{(\alpha)}$ is nonpositive on $J_\lambda^{(i)}$ and has a zero $q\in J_\lambda^{(i)}$, and hence $q$ is a point of tangency (see Figure \ref{fig:schemas1-2}).
\end{proof}

We can now apply Lemma \ref{lem:prelim-2} to conclude that the tangency at $p$ must either be quadratic, or for some $\lambda < \lambda_0$, $\ell_\lambda$ intersects $\mathcal{F}_\lambda$ tangentially at some point. On the other hand, by assumption, tangencies cannot occur for $\lambda < \lambda_0$. Thus the tangency at $p$ must be quadratic.

Assume that we have a quadratic tangency at $p$ between $\ell_{\lambda_0}$ and some leaf $\phi_{\lambda_0}$ of the Cantor lamination $W^s(\Lambda_{\lambda_0})$. Assume for a moment that the leaf $\phi_{\lambda_0}$ is not a boundary of the lamination. Since for $\lambda < \lambda_0$ the tangency unfolds, it could either unfold as shown in Figure \ref{fig:schemas3-8} (a)-(b), or as in (c)-(d); in either case, arbitrarily close to $\phi_{\lambda_0}$ there exists a leaf of the foliation such that for some $\lambda < \lambda_0$, this leaf intersects the line tangentially.

Assume now that $\phi_{\lambda_0}$ is a boundary of the lamination. Since there are no isolated points in the spectrum, in this case we either have an unfolding shown in Figure \ref{fig:schemas3-8} (e)-(f) or (g)-(h).

If the tangency unfolds as shown in Figure \ref{fig:schemas3-8} (e)-(f), then as before, arbitrarily close to $\phi_{\lambda_0}$ there exists a leaf such that for some $\lambda < \lambda_0$, the intersection of the line with this leaf is tangential.

Now suppose that the tangency unfolds as shown in Figure \ref{fig:schemas3-8} (g)-(h). In this case the interval along $\ell_\lambda$ bounded by the intersection points $\phi_\lambda\cap \ell_\lambda$, as shown in the picture, corresponds to a gap in the spectrum. We know that for all sufficiently small couplings $\lambda$, all the gaps allowed by the gap labeling theorem are open (see the discussion preceding the statement of Theorem \ref{t.completegaplabeling}). On the other hand, by assumption, for all $\lambda < \lambda_0$, the line $\ell_\lambda$ intersects the stable lamination transversally; this allows for continuation of the open gaps from the small coupling regime to all $\lambda < \lambda_0$ with all gaps remaining open; see \cite[Theorem 4.3]{DG11}. In particular, this also guarantees that for any gap in the spectrum at $\lambda < \lambda_0$, its two boundary points correspond to the intersection of $\ell_\lambda$ with two stable manifolds of two distinct periodic points (for further details on gap opening, see \cite[Section 3]{DG11}). Thus an intersection of $\ell_\lambda$ for $\lambda < \lambda_0$ with a stable manifold cannot form a gap, precluding the unfolding of a tangency as shown in Figure \ref{fig:schemas3-8} (g)-(h).

This shows that the tangency at $p$ cannot be quadratic. Together with Lemma \ref{lem:prelim-2}, this proves Theorem~\ref{t.transversal}.
\end{proof}

\begin{proof}[Proof of Theorem~\ref{t.ddcs}.]
Given Theorem~\ref{t.transversal}, the result follows from \cite[Corollary~2]{DG09} and its proof.
\end{proof}

\begin{proof}[Proof of Theorem~\ref{t.completegaplabeling}.]
The result is a consequence of Theorem~\ref{t.transversal} and \cite[Theorem~4.3]{DG11}.
\end{proof}

\begin{proof}[Proof of Theorem~\ref{t.doesexactdim}.]
The assertion of the theorem can be obtained from Theorem~\ref{t.transversal} and \cite[Theorem~1.1]{DG12}; compare the discussion in Remark~(e) on \cite[p.~978]{DG12} of the role of $\lambda_0$ in the formulation of \cite[Theorem~1.1]{DG12}.
\end{proof}

\section{Transport Exponents}


In this section we prove the identity \eqref{e.transportexponentidentity}. We begin by establishing some results about the dynamics of the trace map.
\begin{prop}\label{p.trans}
For every $\lambda > 0$, all unstable manifolds of $T_\lambda : S_\lambda \to S_\lambda$ are transversal to the circle $C_\lambda := \{ z = 0 \} \cap S_\lambda$.
\end{prop}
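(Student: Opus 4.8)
The plan is to prove Proposition~\ref{p.trans} by a direct invariant-cone-field argument. Unlike the situation in Theorem~\ref{t.transversal}, there is no helpful reality input here: that proof used that complex intersections of the complexified line of initial conditions with complexified stable manifolds correspond to complex ``energies'' with bounded forward orbit, hence to points of the spectrum, hence are real; the circle $C_\lambda$ is not a line of initial conditions for any operator in the family and admits no such interpretation, so one argues directly. Throughout one uses that $\Lambda_\lambda$ is a locally maximal hyperbolic set for every $\lambda>0$, together with the explicit polynomial form $DT_\lambda=\left(\begin{smallmatrix}2y&2x&-1\\1&0&0\\0&1&0\end{smallmatrix}\right)$.

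First I would record the geometry. The set $C_\lambda$ is the real-analytic circle $\{x^2+y^2=1+\lambda^2/4,\ z=0\}$; it lies inside the standard bounded trapping region $D_\lambda\subset S_\lambda$ of $T_\lambda$ (the complement of the region where orbits escape to infinity, which is explicit and controlled in terms of $\lambda$). Using the monotone escape dynamics of the trace map, every $q\in C_\lambda$ lying on some unstable manifold $W^u(x)$, $x\in\Lambda_\lambda$, has its entire backward orbit in $D_\lambda$, i.e. lies on $W^u_{\mathrm{loc}}(\Lambda_\lambda)=\bigcap_{n\ge0}T_\lambda^n(D_\lambda)$; hence $T_qW^u(x)$ is obtained from the unstable cone near $\Lambda_\lambda$ by finitely many forward iterates of $DT_\lambda$ along an orbit staying in $D_\lambda$. (Note $C_\lambda$ does meet $\Lambda_\lambda$: with $\rho=\sqrt{1+\lambda^2/4}$ the six points $(\pm\rho,0,0),(0,\pm\rho,0),(0,0,\pm\rho)$ form a periodic orbit of $T_\lambda$, four of them on $C_\lambda$.)

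Next I would invoke, or construct directly from $DT_\lambda$, a continuous ``stable'' cone field $q\mapsto\mathcal C_q\subset T_qS_\lambda$ on $D_\lambda$ with the usual properties: backward invariance along orbits in $D_\lambda$ ($DT_\lambda^{-1}\mathcal C_p\subset\mathrm{int}\,\mathcal C_{T_\lambda^{-1}p}$ whenever $p,T_\lambda^{-1}p\in D_\lambda$), no contraction of $\mathcal C$-vectors by $DT_\lambda^{-1}$ in a suitable adapted metric, and, over $\Lambda_\lambda$, $\mathcal C_p\supset E^s_p$ with $\mathcal C_p$ uniformly bounded away from $E^u_p$. The crucial additional point, verified by an explicit computation, is that $\mathcal C$ contains the tangent directions of $C_\lambda$: parametrizing $C_\lambda$ by $\theta\mapsto(\rho\cos\theta,\rho\sin\theta,0)$, with tangent direction $(-\sin\theta,\cos\theta,0)$, one checks $(-\sin\theta,\cos\theta,0)\in\mathcal C_{(\rho\cos\theta,\rho\sin\theta,0)}$ for all $\theta$ and all $\lambda>0$; at the finitely many angles where the cone estimate degenerates (the period-six points above) one instead computes $DT_\lambda^6$ restricted to the tangent plane and checks that its expanding eigendirection is never tangent to $C_\lambda$ — the relevant $2\times2$ matrix has trace $16\rho^4+2$ and determinant $1$, and equality of its unstable eigendirection with $T_qC_\lambda$ would force $\rho=0$.

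Granting these two ingredients the proposition follows: if $q\in C_\lambda\cap W^u(x)$ were a tangency then $T_qC_\lambda=T_qW^u(x)$, and iterating backward, $DT_\lambda^{-n}(T_qC_\lambda)=T_{T_\lambda^{-n}q}W^u(x)$ converges to a vector in $E^u$ over $\Lambda_\lambda$ and is exponentially contracted as $n\to\infty$, while by backward invariance it stays in $\mathcal C$ and is not contracted there, contradicting that $\mathcal C$ avoids $E^u$ over $\Lambda_\lambda$. The step I expect to be the main obstacle is the uniform-in-$\lambda$ verification that the tangent directions of $C_\lambda$ lie in the stable cone, most delicately for small $\lambda$: there $\rho\approx1$, the surface $S_\lambda$ approaches the singular Cayley cubic $S_0$, and the hyperbolicity constants of $\Lambda_\lambda$ degenerate, so one may have to follow several steps of $DT_\lambda^{-1}$ along $C_\lambda$ — whose first backward iterates are the circles $\{y=0\}\cap S_\lambda$ and $\{x=0\}\cap S_\lambda$, and then more complicated curves — rather than rely on a one-step cone estimate.
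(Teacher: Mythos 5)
Your plan is a genuinely different strategy from the paper's, but it does not close, and I think the gap you flag at the end is not a minor loose end -- it is the whole difficulty, and the paper's proof is specifically engineered to avoid having to face it.

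First, the premise ``there is no helpful reality input here'' is not correct. The paper's proof of Proposition~\ref{p.trans} does run on a reality input, just not the one you had in mind. The observation is that the $z$-component of $T^k_\lambda(\ell_\lambda(E))$ is, up to a factor $\tfrac12$, the discriminant of a period-$F_{k-1}$ discrete Schr\"odinger operator. Floquet theory (e.g.\ \cite[Theorem~5.4.2]{SimonSzego}) says this polynomial has $F_{k-1}$ simple real zeros and, more to the point here, that for every $c\in(-1,1)$ the level set $\{x_{k-1}=c\}$ consists of $F_{k-1}$ real points at which the derivative is nonzero; equivalently, $T^k_\lambda(\ell_\lambda)$ is transversal to every horizontal plane $\{z=c\}$, $|c|<1$. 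This is the reality/Floquet fact. The second ingredient is Theorem~\ref{t.transversal} (transversality of $\ell_\lambda$ to $W^s(\Lambda_\lambda)$, already proved at this point), which via the Inclination Lemma makes arcs of $T^k_\lambda(\ell_\lambda)$ accumulate on $W^u(\Lambda_\lambda)$ in $C^2$ (and, holomorphically, on the complexified unstable leaves). The proof then takes the smallest $\lambda^*$ with a tangency: if the tangency of $W^u$ with $C_{\lambda^*}$ were quadratic, an arc of $T^k_{\lambda^*}(\ell_{\lambda^*})$ $C^2$-close to that leaf would have a quadratic tangency with some nearby plane $\{z=\varepsilon\}$, contradicting the Floquet transversality; if the tangency were of order $m>2$, a holomorphic Inclination Lemma plus Hurwitz forces $m$ real simple zeros of the graph function $f_k$ near the tangency, and the argument of Lemma~\ref{lem:prelim-2} again produces a forbidden tangency with a horizontal plane. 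No cone field is constructed at any point.

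By contrast, the decisive step in your plan -- that the tangent direction $(-\sin\theta,\cos\theta,0)$ of $C_\lambda$ lies in a backward-invariant stable cone field, uniformly for all $\lambda>0$ -- is asserted, not verified, and I do not see how to verify it in the intermediate and small coupling regimes. The cone field you need is not the one handed to you by hyperbolicity on a neighborhood of $\Lambda_\lambda$: the points of $C_\lambda\cap W^u(\Lambda_\lambda)$ can be far from $\Lambda_\lambda$, and to make a one-step (or few-step) cone estimate along the circle you would need explicit uniform hyperbolicity estimates on all of $D_\lambda$, which is precisely what degenerates as $\lambda\to 0$ (the surface limits to the Cayley cubic and the expansion constants along $\Lambda_\lambda$ tend to $1$). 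This is not a technical irritant: the absence of usable cone estimates for intermediate $\lambda$ is exactly why transversality for this model was open between the Casdagli (large $\lambda$) and Damanik--Gorodetski (small $\lambda$) regimes, and the contribution of the present paper is to replace the cone-field route by the polynomial/Floquet/Hurwitz route. A proof of Proposition~\ref{p.trans} by direct cone-field estimates valid for every $\lambda>0$ would in essence be a new proof of Theorem~\ref{t.transversal} as well, and would likely have been preferred had it been available.

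Two smaller points. Your own argument, even granting the cone containment everywhere it is claimed, has to be supplemented at the period-six orbit $(\pm\rho,0,0),(0,\pm\rho,0),(0,0,\pm\rho)$ that you correctly note lies in $\Lambda_\lambda\cap C_\lambda$; there the ``push back to $\Lambda_\lambda$'' argument degenerates (you are already on $\Lambda_\lambda$), so the eigendirection computation you sketch is essential, not optional, and it has to be done on the tangent plane to $S_\lambda$, not in $\R^3$. Second, the relationship to Theorem~\ref{t.transversal} is backwards in your write-up: the paper's proof of Proposition~\ref{p.trans} \emph{uses} Theorem~\ref{t.transversal} (via the Inclination Lemma) rather than being logically parallel to it, so a proposal that ignores Theorem~\ref{t.transversal} is throwing away the tool the paper relies on.
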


\begin{proof}
We know that for every $\lambda > 0$ and every $k \in \Z_+$, the curve $T^k_{\lambda}(\ell_\lambda)$ has the following properties:

\

1) $T^k_{\lambda}(\ell_\lambda)$ is transversal to the plane $\{ z = c \}$ for any $c \in (-1, 1)$;

\

2) If we consider $\ell_\lambda$ as a complex line in $\mathbb{C}^3$, then $T^k_{\lambda}(\ell_\lambda)\cap \{z=0\}$ consists of $F_{k-1}$ points, and all of them are in the real subspace.

\

Indeed, both statements follow from standard results in Floquet theory. Namely, the $z$-component of $T^k_{\lambda}(\ell_\lambda(E))$ is, as a function of $E \in \C$, equal to one-half times the discriminant of a discrete Schr\"odinger operator with a periodic potential of period $F_{k-1}$ (see, e.g., \cite{S87}).\footnote{In the theory of periodic Schr\"odinger operators, the discriminant is the trace of the transfer matrix over one period.} Thus, the values of $E$ for which $T^k_{\lambda}(\ell_\lambda(E)) \in \{z=c\}$ are precisely the $E$'s for which one-half the discriminant takes on the value $c$. If $c \in (-1, 1)$, then due to, for example, \cite[Theorem~5.4.2]{SimonSzego}, there are precisely $F_{k-1}$ many of them, say $E_1, \ldots E_{F_{k-1}}$, and for every $j \in \{ 1, \ldots , F_{k-1} \}$, $E_j$ is real, and the derivative of the discriminant at $E_j$ is non-zero.

\

It is known that all unstable manifolds of $T_\lambda : S_\lambda \to S_\lambda$ are transversal to $C_\lambda$ if $\lambda$ is sufficiently small. Indeed, this is true for $\lambda = 0$ and extends to small values of $\lambda$ by continuity. Suppose that Proposition~\ref{p.trans} does not hold and denote by $\lambda^* > 0$ the smallest value of the coupling constant such that one of the unstable manifolds of $T_{\lambda^*}$ has a tangency with $C_\lambda$. Notice that this tangency cannot be quadratic. Namely, due to Theorem~\ref{t.transversal} the line $\ell_\lambda$ is transversal to the stable manifolds of $T_{\lambda^*}$, and therefore for any sufficiently large $k \in \Z_+$, the curve $T^k_{\lambda^*}(\ell_{\lambda^*})$ contains an arc that is $C^2$-close to an arc of the unstable manifold near the point of tangency. But in this case this arc would have a point of quadratic tangency with a plane $\{z = \varepsilon\}$ for some small $\varepsilon$, and this contradicts the properties of the curve $T^k_{\lambda^*}(\ell_{\lambda^*})$ above.

Therefore the tangency between $T^k_{\lambda^*}(\ell_{\lambda^*})$ and $C_{\lambda^*}$ must be of order $m > 2$. There exists a (complex) neighborhood $U \subset S_{\lambda^*}$ of the point of tangency and a biholomorphic change of coordinates $F : U \to \mathbb{D} \times \mathbb{D}$, where $\mathbb{D}$ is a unit disc in $\mathbb{C}$, such that $F(U \cap \{z=0\}) = \mathbb{D} \times \{0\}$, the point of tangency is mapped into $0$, and the arc of the unstable manifold in $U$ is mapped into the graph of a holomorphic function $g : \mathbb{D} \to \mathbb{C}$ such that $g(0) = 0$ is a zero of order $m > 2$. A holomorphic version of the Inclination Lemma (which follows, for example, from the graph transform construction from \cite[Lemma~7.5]{Ilyashenko2008}) implies that for each sufficiently large $k \in \Z_+$, there is a connected component of the intersection $T^k_{\lambda^*}(\ell_{\lambda^*})\cap U$ such that its image under $F$ is a graph of a holomorphic function $f_k : \mathbb{D} \to \mathbb{C}$ and $f_k \rightrightarrows g$. Due to the Hurwitz Theorem, for all large $k \in \Z_+$, the function $f_k$ must have $m > 2$ zeros in $\mathbb{D}$, and due to the properties of $T^k_{\lambda^*}(\ell_{\lambda^*})$, all these zeros must be simple and real. But this once again leads to the existence of a tangency between the curve $T^k_{\lambda^*}(\ell_{\lambda^*})$ and the plane $\{z = \varepsilon\}$ for some small $\varepsilon$ (due to the same arguments that were used in the proof of Lemma \ref{lem:prelim-2} above), which is a contradiction.
\end{proof}

For $E \in \C$ and $k \in \Z$, define $x_k(E)$ by
$$
T^k\left( \frac{E-\lambda}{2} , \frac{E}{2} , 1 \right) = T^k (\ell_\lambda(E)) = \left( x_{k+1}(E) , x_k(E) , x_{k-1}(E) \right) .
$$
Then, for $k \ge 0$, $x_k$ is a polynomial of degree $F_k$, where $F_0 = F_1 = 1$, $F_{k+1} = F_k + F_{k-1}$, $k \ge 1$. For $\delta > 0$, set
$$
\sigma_k^\delta = \{ E \in \C: |x_k(E)| \le 1 + \delta \}.
$$

\begin{lemma}\label{l.components}
For every $\lambda > 0$, there exists $\delta(\lambda) > 0$ such that for every $\delta \in [0,\delta(\lambda))$ and every $k \ge 0$, $\sigma_k^\delta$ has precisely $F_k$ connected components. Denote these connected components by $B_k^{(j)}(\delta)$, $j = 1 , \ldots , F_k$. Each $B_k^{(j)}(\delta)$ is symmetric about the real line, intersects $\R$ in a compact non-degenerate interval, and contains precisely one $E_k^{(j)} \in \R$ such that $x_k(E_k^{(j)}) = 0$.
\end{lemma}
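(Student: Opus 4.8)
The plan is to identify $x_k$ with (one half of) the discriminant of a periodic Schr\"odinger operator and to read off the component structure of $\sigma_k^\delta$ from the position of the critical points of $x_k$. As recalled in the proof of Proposition~\ref{p.trans} (and going back to \cite{S87}), the $z$-component of $T^{k+1}(\ell_\lambda(E))$ is exactly $x_k(E)$, and by Floquet theory it equals one half of the discriminant $\Delta_k(E)$ of the discrete Schr\"odinger operator whose potential is the $F_k$-periodization of the canonical Fibonacci approximant. Hence $x_k$ is a real polynomial of degree $F_k$ with positive leading coefficient; by classical Floquet--Hill theory (e.g. \cite[Theorem~5.4.2]{SimonSzego}) all $F_k$ zeros of $x_k$ are real and simple, all $F_k-1$ critical points $c_k^{(1)}<\dots<c_k^{(F_k-1)}$ of $x_k$ are real and interlace the zeros, $|x_k(c_k^{(j)})|\ge 1$ for every $j$, and for every $c\in(-1,1)$ the equation $x_k(E)=c$ has $F_k$ real (necessarily simple) solutions. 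The last three facts also follow from what was recorded in the proof of Proposition~\ref{p.trans}: $T^{k}(\ell_\lambda)$ is transversal to every plane $\{z=c\}$ with $c\in(-1,1)$ and meets $\{z=0\}$ in $F_{k-1}$ real points, so (after reindexing) $x_k^{-1}((-1,1))\cap\mathbb{R}$ is a finite disjoint union of open intervals on each of which $x_k$ is strictly monotone, which forces $|x_k|\ge 1$ at every critical point.

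For the reduction, view $x_k\colon\mathbb{C}\to\mathbb{C}$ as a degree-$F_k$ branched covering. Then $\sigma_k^\delta=x_k^{-1}(\overline{\mathbb{D}}_{1+\delta})$ is the preimage of a closed round disk, so (using that a polynomial lemniscate has simply connected components, by the maximum principle, together with Riemann--Hurwitz) each of its components $U_i$ is a simply connected region mapped properly onto $\overline{\mathbb{D}}_{1+\delta}$ with some degree $d_i\ge1$, $\sum_i d_i=F_k$, and $\sum_i(d_i-1)$ equals the number of critical points of $x_k$ contained in $\sigma_k^\delta$. In particular $\sigma_k^\delta$ has at most $F_k$ components, with exactly $F_k$ if and only if $\sigma_k^\delta$ contains no critical point of $x_k$, i.e. if and only if $|x_k(c_k^{(j)})|>1+\delta$ for every $j$. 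Since all critical points are real, once the count equals $F_k$ there are also exactly $F_k$ bands of $x_k^{-1}([-1-\delta,1+\delta])\cap\mathbb{R}$; the $F_k$ complex components are permuted by complex conjugation, hence each is conjugation-invariant, meets $\mathbb{R}$ in exactly one band (a compact non-degenerate interval), is mapped biholomorphically onto $\overline{\mathbb{D}}_{1+\delta}$, and so contains exactly one point where $x_k$ vanishes, which lies on $\mathbb{R}$. Thus the lemma reduces to the single quantitative claim: for every $\lambda>0$ there is $\delta(\lambda)>0$ such that $|x_k(c)|>1+\delta(\lambda)$ at every critical point $c$ of every $x_k$, $k\ge0$.

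The heart of the matter, and the main obstacle, is this claim; taking $\delta=0$ it says precisely that all $F_k-1$ gaps of every Fibonacci periodic approximant are open. A critical point $c$ with $|x_k(c)|=1$ is a point at which the curve $T^{k+1}(\ell_\lambda)$ touches one of the planes $\{z=\pm1\}$ to order $\ge2$ while lying on one side of it. I would exclude this by the mechanism used in the proofs of Theorem~\ref{t.transversal} and Proposition~\ref{p.trans}: $T$ maps the real slice $\{y=\pm1\}\cap S_\lambda$ (a pair of lines) into $\{z=\pm1\}$, so the tangency propagates under iteration, and a holomorphic inclination/graph-transform argument combined with Hurwitz's theorem converts it into a tangency of $T^{m}(\ell_\lambda)$ with a plane $\{z=c\}$, $c\in(-1,1)$, for all large $m$, contradicting the transversality statement in property~(1) of the proof of Proposition~\ref{p.trans}; alternatively, varying $\lambda$ and using that the spectrum has no isolated points, one can run the unfolding-of-tangency argument from the proof of Theorem~\ref{t.transversal}. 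The uniform lower bound $\delta(\lambda)>0$ over all $k$ I would obtain from the hyperbolicity of $\Lambda_\lambda$ together with Theorems~\ref{t.transversal} and \ref{t.completegaplabeling}: since all gaps of $\Sigma_\lambda$ are open and the gap structure of $\sigma_k^0$ stabilizes onto that of $\Sigma_\lambda$ with bounded distortion inherited from the hyperbolic set, the critical values $|x_k(c)|$ stay bounded away from $1$ by a constant depending only on $\lambda$. Making the propagation argument rigorous and extracting this uniform constant are where the real work lies; the remainder is bookkeeping built on Floquet theory and the results of Section~\ref{sec:trans}.
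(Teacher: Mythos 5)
Your reduction, via Floquet theory and the Riemann--Hurwitz count for the branched cover $x_k\colon\C\to\C$, to the single quantitative claim that $|x_k(c)|>1+\delta(\lambda)$ uniformly in $k$ at every critical point $c$ of every $x_k$, is correct, and you have diagnosed the crux precisely. But the paper proves this claim by an elementary, self-contained argument that your proposal does not consider: it uses the semi-conjugacy $F(\theta_1,\theta_2)=(\cos 2\pi(\theta_1+\theta_2),\cos 2\pi\theta_1,\cos 2\pi\theta_2)$ between the toral automorphism $\mathcal A$ and $T|_{\mathbb S}$, observes that $T^k(\ell_0^b)=F(A^k\tilde\ell_0^b)$ therefore wraps exactly $F_k/2$ times around the central sphere $\mathbb S$, and then notes that for $\lambda>0$ the corresponding curve on $S_\lambda$ still winds $F_k/2$ times but the Fricke--Vogt invariant forces the extremal $y$-values along each wind to exceed $1$ by a margin depending only on $\lambda$. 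This gives both the $F_k$ real bands and the uniform $\delta(\lambda)>0$ simultaneously, with an explicit constant, and without invoking Theorem~\ref{t.transversal}, Theorem~\ref{t.completegaplabeling}, or any hyperbolicity of $\Lambda_\lambda$. The complex part is then a one-line maximum modulus argument (each conjugation-symmetric component can contain only one real band; otherwise its boundary $\{|x_k|=1+\delta\}$ would enclose real points of modulus $>1+\delta$), which is more elementary than the degree count you use but reaches the same conclusion.

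The genuine gap in your proposal is exactly the step you flag as ``where the real work lies,'' and neither of the routes you sketch clearly closes it. The fallback via Theorem~\ref{t.completegaplabeling} plus ``bounded distortion inherited from the hyperbolic set'' is not sound as stated: the $F_k-1$ gaps of the periodic approximant $\sigma_k^0$ are not the gaps of $\Sigma_\lambda$ (most approximant gaps close up or move as $k\to\infty$, and only some survive), and openness of gaps of $\Sigma_\lambda$ does not on its own control the critical values $|x_k(c)|$ of a fixed finite $k$. The tangency-propagation route is better aimed, since it is modeled on the argument for Proposition~\ref{p.trans}, but unlike the circle $\{z=0\}\cap S_\lambda$ used there, the set $\{z=\pm1\}\cap S_\lambda$ is a pair of lines through the conic singularities, and how a tangency there would cascade to contradict the transversality in property (1) of that proof is not spelled out and would need a real argument. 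In short: the reduction is right, the diagnosis is right, but the key uniform bound is not proved, and the paper's torus semi-conjugacy is a much cheaper way to get it.
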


\begin{remark}
{\rm (a)} We will choose a consistent labeling, namely the one which ensures that $B_k^{(j)}(\delta) \cap \R$ lies to the left of $B_k^{(j')}(\delta) \cap \R$ if $j < j'$. In particular, we have $E_k^{(1)} < E_k^{(2)} < \cdots < E_k^{(F_k)}$.\\
{\rm (b)} Clearly, the zero $E_k^{(j)}$ does not depend on $\delta \in [0,\delta(\lambda))$.
\end{remark}

\begin{proof}[Proof of Lemma~\ref{l.components}.]
Since the coefficients of the polynomial $x_k$ are real, we have $x_k(\bar E) = \overline{{x_k(E)}}$, and hence in particular $|{x_k(\bar E)}| = |x_k(E)|$. This shows that $\sigma_k^\delta$, and hence each of its connected components, is symmetric about the real line.

Recall that the free spectrum $\Sigma_0$ is equal to the interval $[-2,2]$, which corresponds to the line segment
$$
\ell_0^b = \left\{ \left(\frac{E}{2}, \frac{E}{2}, 1 \right) : E \in [-2,2] \right\} \subset \ell_0.
$$
To study the evolution of $\ell_0^b$ under the trace map, let us recall the following. The surface
$$
\mathbb{S} = S_0 \cap \{ (x,y,z)\in \Bbb{R}^3 : |x|\le 1, |y|\le 1, |z|\le 1\}
$$
is homeomorphic to $S^2$, invariant under $T$, smooth everywhere except at the four points $P_1=(1,1,1)$, $P_2=(-1,-1,1)$, $P_3=(1,-1,-1)$, and $P_4=(-1,1,-1)$, where $\mathbb{S}$ has conic singularities, and the trace map $T$ restricted to $\mathbb{S}$ is a factor of the hyperbolic automorphism of $\T^2 = \R^2 / \Z^2$ given by
\begin{equation}\label{e.torusmapdef}
\mathcal{A}(\theta_1, \theta_2) = (\theta_1 + \theta_2, \theta_1)\ (\text{\rm mod}\ 1).
\end{equation}
The semi-conjugacy is given by the map
\begin{equation}\label{e.semiconj}
F: (\theta_1, \theta_2) \mapsto (\cos 2\pi(\theta_1 + \theta_2), \cos 2\pi \theta_1, \cos 2\pi \theta_2).
\end{equation}
The map $\mathcal{A}$ is hyperbolic, and is given by the matrix $A = \begin{pmatrix} 1 & 1 \\ 1 & 0 \end{pmatrix}$.

From the explicit form \eqref{e.semiconj} of the semi-conjugacy $F$, we see that
$$
\tilde \ell_0^b = \left\{ (\theta_1 , \theta_2) : \theta_2 = 0 , \; \theta_1 \in \left[ 0,\tfrac12 \right] \right\} \subset \T^2
$$
is mapped by $F$ onto $\ell^b_0$. Since $T^k (\ell_0^b) = F(A^k (\tilde \ell_0^b))$ and $A^k (\tilde \ell_0^b)$ is the line segment from
$$
A^k \begin{pmatrix} 0 \\ 0 \end{pmatrix} = \begin{pmatrix} 0 \\ 0 \end{pmatrix}
$$
to
$$
A^k \begin{pmatrix} \frac12 \\ 0 \end{pmatrix} = \frac12 \begin{pmatrix} F_k \\ F_{k-1} \end{pmatrix}
$$
(modulo $\Z^2$), we see that $T^k (\ell_0^b)$ wraps $F_k/2$ times around $\mathbb{S}$. Now turn on $\lambda$. Since the surfaces $S_\lambda$ and the lines of initial conditions $\ell_\lambda$ change continuously, $T^k (\ell_\lambda^b)$ still wraps $F_k/2$ times around the central part of $S_\lambda$. Here, $\ell_\lambda^b$ is the line segment on $\ell_\lambda$ that corresponds to the convex hull of $\Sigma_\lambda$ via the map $E \mapsto \left( \frac{E-\lambda}{2} , \frac{E}{2} , 1 \right)$. Moreover, the extremal values reached during each turn-around (of the second coordinate, say) are now at least $1 + \frac{\lambda^2}{4}$ in absolute value. This implies that (again considering the second coordinate, say, which determines $x_k(E)$) the value of $x_k(E)$ runs at least from $-1 - \frac{\lambda^2}{4}$ to $1 + \frac{\lambda^2}{4}$ and vice versa. In particular, for every $\delta \in (0,\frac{\lambda^2}{4})$, the preimage of $[-1-\delta,1+\delta]$ under $x_k$ consists of precisely $F_k$ compact mutually disjoint intervals. This shows that $\sigma_k^\delta \cap \R$ has exactly $F_k$ connected components, each of which contains precisely one zero of $x_k$. Let us denote these $F_k$ real zeros of $x_k$ by $E_k^{(1)} < E_k^{(2)} < \cdots < E_k^{(F_k)}$.

Let us argue that each $E_k^{(j)}$ is also the only zero of $x_k$ in the complex connected component $B_k^{(j)}(\delta)$ of $\sigma_k^\delta$, which contains the real connected component that contains $E_k^{(j)}$. Suppose this fails. Since $\sigma_k^{\delta}$ is symmetric with respect to the reflection about the real axis, we can infer that if $B_k^{(j)}(\delta)$ contains another zero of $x_k$, and hence another connected component of $\sigma_k^{\delta} \cap \R$, we find that the boundary of this connected component, on which $x_k$ has constant modulus $1 + \delta$, contains a closed curve that bounds a bounded region containing points at which $x_k$ has modulus strictly larger than $1 + \delta$ (e.g., points on the real line strictly between the two connected components of $\sigma_k^{\delta} \cap \R$ in question). Thus, we obtain a contradiction due to the maximum modulus principle. It follows that $\sigma_k^\delta$, too, has precisely $F_k$ connected components, each of which contains precisely one root of $x_k$, which is real.
\end{proof}

\begin{prop}\label{p.first}
For every $\lambda > 0$ and every $\varepsilon > 0$, there exists $k_0 \in \Z_+$ such that for every $k > k_0$ and every $p \in \Lambda_\lambda$, there exists $E_k \in \R$ such that $x_k(E_k) = 0$ and
$$
\frac{1}{k}\log \|DT^k(p)|_{E^u_p}\|-\varepsilon\le \frac{1}{k}\log |x'_k(E_k)|\le \frac{1}{k}\log \|DT^k(p)|_{E^u_p}\|+\varepsilon.
$$
\end{prop}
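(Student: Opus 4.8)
The goal is to show that $\log|x_k'(E_k)|$ and $\log\|DT^k(p)|_{E^u_p}\|$ differ by a quantity bounded uniformly in $k$ and in $p\in\Lambda_\lambda$, for a suitable zero $E_k=E_k^{(j)}$ of $x_k$ depending on $p$; dividing by $k$ and choosing $k_0$ with $O(1)/k_0<\varepsilon$ then gives the proposition. The link between the two sides passes through two intermediate quantities: the length $|B_k^{(j)}(\delta)\cap\R|$ of the component of $\sigma_k^\delta$ containing $E_k^{(j)}$, and the factor by which $T^k$ expands $\ell_\lambda$ at $E_k^{(j)}$. For the first link, fix $\delta=\delta(\lambda)>0$ as in Lemma~\ref{l.components} and some $0<\delta'<\delta$. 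Since $x_k$ has exactly one (simple) zero in $B_k^{(j)}(\delta)$ and $|x_k|\equiv 1+\delta$ on $\partial B_k^{(j)}(\delta)$, the argument principle shows that $x_k$ restricts to a biholomorphism of $B_k^{(j)}(\delta)$ onto $\{|w|<1+\delta\}$; applying Koebe's distortion theorem to its inverse on $\{|w|<1+\delta'\}$ gives
\[
|x_k'(E_k^{(j)})|^{-1}\asymp\mathrm{diam}\,B_k^{(j)}(\delta')\asymp|B_k^{(j)}(\delta')\cap\R|,
\]
with constants depending only on $\lambda$. This is where the analyticity of the trace map is used.

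\emph{Dynamical estimate.} By construction $T^k$ maps the arc $\ell_\lambda(B_k^{(j)}(\delta')\cap\R)$ onto the portion of $T^k(\ell_\lambda)$ whose middle coordinate runs over $[-1-\delta',1+\delta']$, and this portion meets $C_\lambda$ at $T^k(\ell_\lambda(E_k^{(j)}))$. By the combinatorics of the Fibonacci horseshoe the forward orbit $\ell_\lambda(E_k^{(j)}),\dots,T^k\ell_\lambda(E_k^{(j)})$ stays in a fixed compact neighborhood $\mathcal N$ of $\Lambda_\lambda$ on which the hyperbolic splitting extends to an invariant cone field, and by Theorem~\ref{t.transversal} and the inclination lemma this portion of $T^k(\ell_\lambda)$ is $C^1$-close to a piece of an unstable manifold of $\Lambda_\lambda$. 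By Proposition~\ref{p.trans} together with compactness of $\Lambda_\lambda$, unstable manifolds meet $C_\lambda$ transversally with angle bounded below uniformly, so this portion has length bounded above and below by constants depending only on $\lambda$. Since $\ell_\lambda$ is transversal to $W^s(\Lambda_\lambda)$, the tangent vector $\ell_\lambda'$ is mapped within a bounded number of iterates into the unstable cone field on $\mathcal N$; hence the expansion factor of $T^k$ along $\ell_\lambda$ at $E_k^{(j)}$ equals $|B_k^{(j)}(\delta')\cap\R|^{-1}$, and also $\prod_{i=0}^{k-1}\|DT(T^i\ell_\lambda(E_k^{(j)}))|_{\mathcal C^u}\|$, each up to a bounded multiplicative error. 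Finally the orbit $(T^i\ell_\lambda(E_k^{(j)}))_{i=0}^{k}$ shadows a genuine orbit $(T^iq)_{i=0}^{k}$, $q\in\Lambda_\lambda$, with $d(T^i\ell_\lambda(E_k^{(j)}),T^iq)\le C(\mu^i+\mu^{k-i})$ for some $\mu\in(0,1)$; since $x\mapsto\log\|DT(x)|_{E^u_x}\|$ is H\"older continuous ($E^u$ is H\"older, $T$ is smooth), summing a geometric series gives
\[
\log|x_k'(E_k^{(j)})|=\sum_{i=0}^{k-1}\log\|DT(T^iq)|_{E^u_{T^iq}}\|+O(1)=\log\|DT^k(q)|_{E^u_q}\|+O(1).
\]

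\emph{Matching and conclusion.} The $F_k$ components $B_k^{(j)}(\delta)$ are in bijection with the admissible words of length $k$ for the subshift conjugate to $T_\lambda|_{\Lambda_\lambda}$, and under this bijection the shadowed point $q=q_j$ has itinerary beginning with that word. Given an arbitrary $p\in\Lambda_\lambda$, choose $j$ so that the $j$-th word equals the first $k$ symbols of the itinerary of $p$; then $q_j$ and $p$ have the same forward $k$-coding, so $d(T^iq_j,T^ip)\le C(\mu^i+\mu^{k-i})$, and the same H\"older summation yields $\log\|DT^k(q_j)|_{E^u_{q_j}}\|=\log\|DT^k(p)|_{E^u_p}\|+O(1)$. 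Setting $E_k=E_k^{(j)}$ and combining the three estimates gives $\big|\log|x_k'(E_k)|-\log\|DT^k(p)|_{E^u_p}\|\big|=O(1)$ uniformly in $k$ and $p$, which is the proposition after dividing by $k$.

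The main obstacle is the uniformity in $k$ of the dynamical estimate: one needs the arcs $T^k(\ell_\lambda(B_k^{(j)}(\delta')\cap\R))$ to have length bounded away from $0$ and $\infty$ independently of $k$ and $j$, which is exactly what Proposition~\ref{p.trans} provides via a compactness argument once a uniform lower bound on the angle between unstable manifolds and $C_\lambda$ is known, and one needs bounded distortion for $x_k$ near its zeros, which comes from analyticity through the Koebe estimate. A secondary technical point is the combinatorial bijection between components and itineraries in the matching step; any discrepancy by a bounded shift in the depth $k$ is harmless, as it affects the final estimate only by $O(1)$.
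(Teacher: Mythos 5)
Your overall toolkit is right (Theorem~\ref{t.transversal}, Proposition~\ref{p.trans}, the inclination lemma, distortion estimates, the fact that an orbit with $x_k(E_k^{(j)})=0$ cannot have triggered escape by time $k$), but the route you take differs from the paper and the \emph{Matching and conclusion} step contains a genuine gap.

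Your argument in effect proves the wrong direction first and then tries to invert it. Steps one and two (Koebe plus shadowing) produce, for each zero $E_k^{(j)}$, a point $q_j\in\Lambda_\lambda$ with $\log|x_k'(E_k^{(j)})|=\log\|DT^k(q_j)|_{E^u_{q_j}}\|+O(1)$. That is essentially the content of Proposition~\ref{p.second}. To get Proposition~\ref{p.first} you then need, for an arbitrary $p\in\Lambda_\lambda$, to \emph{find} a $j$ whose shadowing point $q_j$ shares the forward $k$-coding of $p$. You assert a bijection between the $F_k$ components $B_k^{(j)}(\delta)$ and the admissible words of length $k$ for the subshift conjugate to $T_\lambda|_{\Lambda_\lambda}$, but this is not established anywhere in the paper and is not obviously true: the two cardinalities are only of the same order $\Theta(\varphi^k)$, not equal, and even when the counts match one still needs to rule out that two components share a code while some code (in particular the prefix of a specific $p$) is missed. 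Remarking that ``a bounded shift in the depth $k$ is harmless'' does not repair this, since shifting $k$ rescales $F_k$ multiplicatively and does not produce the needed surjectivity onto prefixes. As written, the matching step is an assertion, not an argument.

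The paper's proof avoids this entirely by constructing $E_k$ directly from $p$ rather than by enumerating zeros and matching afterwards: by Theorem~\ref{t.transversal} there is a point $\tilde p\in T^{-k'}(W^s_\delta(p))\cap\ell_\lambda$ for a $\lambda$-uniform $k'$; one follows a small arc $\tau\subset\ell_\lambda$ around $\tilde p$ forward, cutting down to $\tau_{k'+n}=U_\delta(T^n(p))\cap T(\tau_{k'+n-1})$, uses the inclination lemma to see that $T^{k'}(\tau_{k'+n})$ is $C^1$-close to $T^{k'}(W^u_\delta(T^n(p)))$ and, by Proposition~\ref{p.trans}, meets $C_\lambda$; the preimage of such an intersection point is $\ell_\lambda(E_k)$. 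The chain of comparisons
\[
\big|\log|x_k'(E_k)|-\log\|DT^k(\ell_\lambda(E_k))|_{\ell_\lambda}\|\big|,\quad
\big|\log\|DT^k|_{\ell_\lambda}\|-\log\|DT^n|_{\tau_{k'}}\|\big|,\quad
\big|\log\|DT^n|_{\tau_{k'}}\|-\log\|DT^n(p)|_{E^u_p}\|\big|
\]
are each $O(1)$ by transversality at $C_\lambda$ and \cite[Proposition~6.4.16]{KH} plus de~Melo's inclination estimate, and dividing by $k$ finishes. This direct construction makes the desired $E_k$ manifestly exist for \emph{every} $p$, which is exactly what your combinatorial matching tries and fails to deliver. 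A smaller point: the Koebe detour through $\mathrm{diam}\,B_k^{(j)}(\delta')$ is unnecessary for this proposition (the paper uses Koebe only later, in Proposition~\ref{p.transportbounds}(b)); the transversality of $T^k(\ell_\lambda)$ to $\{z=0\}$ already gives $|\log|x_k'(E_k)|-\log\|DT^k(\ell_\lambda(E_k))|_{\ell_\lambda}\||=O(1)$ without going through component sizes.
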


\begin{proof}
Denote as before $C_\lambda = \{ z = 0 \} \cap S_\lambda$. Fix a small $\delta > 0$. Then there exists $k' \in \Z_+$ such that $T^{k'}(W_\delta^u(p)) \cap C_\lambda \ne \emptyset$ and $T^{-k'}(W_\delta^s(p)) \cap \ell_\lambda \ne \emptyset$ for any $p \in \Lambda_\lambda$. Choose any $p \in \Lambda_\lambda$ and pick any point $\tilde p \in T^{-k'}(W_\delta^s(p))\cap \ell_\lambda$. Let $\tau \subset \ell_\lambda$ be an interval that contains $\tilde p$ and such that $T^{k'}(\tau)$ is a connected component of $T^{k'}(\ell_\lambda)\cap U_\delta(p)$. We will denote $\tau_{k'} = T^{k'}(\tau)$ and $\tau_{k'+n} = U_\delta (T^n(p)) \cap T(\tau_{k'+n-1})$ for $n \ge 1$.

Let $k$ be sufficiently large, set $n = k-2k'$. Then $T^{k'}(\tau_{k'+n})$ must have some intersections with $C_\lambda$. Take any point $p^{**} \in C_\lambda \cap T^{k'}(\tau_{k'+n})$. Then, $p^*: = T^{-k}(p^{**}) \in \ell_\lambda$, so $p^* = \ell_\lambda(E_k)$ for some $E_k \in \R$. Let us estimate $\log |x'_k(E_k)|$. Since $W^u(\Lambda_\lambda)$ is transversal to $C_\lambda$ by Proposition~\ref{p.trans}, and $T^{k'}(\tau_{k'+n})$ is $C^{1}$-close to $T^{k'}(W_\delta^u(T^n(p)))$,
$$
\left| \log |x'_k(E_k)| - \log \|DT^k(p^*)|_{\ell_\lambda}\| \right|< C_1,
$$
where $C_1$ is some constant independent of $k$. On the other hand,
$$
\left| \log \|DT^k(p^*)|_{\ell_\lambda}\| - \log \|DT^n(T^{k'}(p^*))|_{\tau_{k'}}\| \right| < C_2,
$$
where $C_2$ is also  independent of $k$. Using \cite[Proposition~6.4.16]{KH} and the fact that $\tau_{k'+j}$ is $C^{1}$-close to $W^u_\delta(T^j(\tau_{k'}))$ (see \cite{M}) we conclude that
$$
\left| \log \|DT^n(T^{k'}(p^*))|_{\tau_{k'}}\| - \log \|DT^n(p)|_{E_p^u}\| \right| < C_3,
$$
also with a $k$-independent constant $C_3$. But this implies that for large enough $k=n+2k'$, we have
$$
\left| \frac{1}{k} \log |x'_k(E_k)| - \frac{1}{k} \log \|DT^k(p)|_{E_p^u}\| \right| < \varepsilon.
$$
\end{proof}

\begin{prop}\label{p.second}
For every $\lambda > 0$ and every $\varepsilon > 0$, there exists $k_0 \in \mathbb{N}$ such that for every $k > k_0$ and every $E_k \in \R$ with $x_k(E_k) = 0$, one can find $p \in \Lambda_\lambda$ such that
$$
\frac{1}{k} \log \|DT^k(p)|_{E^u_p}\| - \varepsilon \le \frac{1}{k} \log |x'_k(E_k)| \le \frac{1}{k} \log \|DT^k(p)|_{E^u_p}\| + \varepsilon.
$$
\end{prop}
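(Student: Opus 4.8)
The plan is to reverse the construction in the proof of Proposition~\ref{p.first}, which produced a zero of $x_k$ from a point of $\Lambda_\lambda$; here we must recover a suitable point of $\Lambda_\lambda$ from a given zero. Fix $\lambda>0$, $\varepsilon>0$ and a small $\delta\in(0,\delta(\lambda))$, and let $E_k$ be a zero of $x_k$ with $k$ large. By Lemma~\ref{l.components}, $E_k$ lies in a single band $I_k:=B_k^{(j)}(\delta)\cap\R$. The one genuinely new input is a band-size estimate: $E_k$ lies within distance $\asymp\|DT^k(p)|_{E^u_p}\|^{-1}$ of $\Sigma_\lambda$, where $p\in\Lambda_\lambda$ is chosen so that $\ell_\lambda(E')\in W^s(p)$ for an appropriate $E'\in\Sigma_\lambda\cap I_k$. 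This should be the usual bounded-distortion estimate for the hyperbolic Cantor set $\Lambda_\lambda$, transported to $\Sigma_\lambda$ through the $C^1$ identification furnished by Theorem~\ref{t.transversal}, and it must be established without presupposing the conclusion being proved (in particular without \eqref{e.transportexponentidentity}).

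Granting this estimate, the forward orbit of $\ell_\lambda(E_k)$ shadows that of $p$ for on the order of $k$ iterates: writing $\mathrm{dist}\bigl(T^i(\ell_\lambda(E_k)),T^i(\ell_\lambda(E'))\bigr)\lesssim|E_k-E'|\cdot\|DT^i(p)|_{E^u_p}\|$ and using $|E_k-E'|\lesssim\|DT^k(p)|_{E^u_p}\|^{-1}$ together with $\|DT^i(p)|_{E^u_p}\|/\|DT^k(p)|_{E^u_p}\|=\|DT^{k-i}(T^i(p))|_{E^u}\|^{-1}\le C\rho^{-(k-i)}$ for some $\rho>1$ (hyperbolicity), this distance is $<\delta$ whenever $k-i$ is large enough, and it tends to $0$ with $k$ for small $i$; hence, fixing a uniform $k'$ with $T^{k'}(\ell_\lambda(E'))\in W^s_\delta(\Lambda_\lambda)$ for all $E'\in\Sigma_\lambda$ (exactly as in Proposition~\ref{p.first}), the orbit of $\ell_\lambda(E_k)$ stays in a fixed small neighborhood $U$ of $\Lambda_\lambda$ for $k'\le i\le k-k''$, $k''$ uniform. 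Now mirror Proposition~\ref{p.first}: let $q:=T^k(\ell_\lambda(E_k))\in C_\lambda$, let $\tau_{k'}$ be the connected component of $T^{k'}(\ell_\lambda|_{I_k})\cap U$ containing $T^{k'}(\ell_\lambda(E_k))$ (transversal to the stable lamination on $U$ by Theorem~\ref{t.transversal}), set $\tau_{k'+n}=U_\delta(T^n(p))\cap T(\tau_{k'+n-1})$, and invoke the holomorphic Inclination Lemma — the graph-transform estimate already used above, cf.\ \cite[Lemma~7.5]{Ilyashenko2008} — so that each $\tau_{k'+n}$ is $C^1$-close to $W^u_\delta(T^n(p))$; the shadowing just obtained guarantees that $q\in\tau_k$.

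The derivative comparison is then the bookkeeping of Proposition~\ref{p.first} run with the two ends of the orbit interchanged. Since $\tau_k$ is $C^1$-close to $W^u_\delta(T^{k-k'}(p))$ near $q$ and unstable manifolds meet $C_\lambda$ transversally with a uniform angle (Proposition~\ref{p.trans}), the coordinate $x_k$ crosses $0$ at $E_k$ with slope comparable to the full speed of $T^k(\ell_\lambda)$ there, so $\bigl|\log|x_k'(E_k)|-\log\|DT^k(\ell_\lambda(E_k))|_{\ell_\lambda}\|\bigr|<C_1$ with $C_1$ independent of $k$; the first and last $k'$ iterates change the logarithm of the expansion of $\ell_\lambda'$ by only $O(1)$; and for the middle $n=k-2k'$ iterates, $C^1$-closeness of $\tau_{k'+j}$ to $W^u_\delta(T^j(\tau_{k'}))$ together with \cite[Proposition~6.4.16]{KH} yields $\bigl|\log\|DT^n(T^{k'}(\ell_\lambda(E_k)))|_{\tau_{k'}}\|-\log\|DT^n(p)|_{E^u_p}\|\bigr|<C_2$, again uniformly in $k$. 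Adding these and dividing by $k$ gives the claimed inequality once $k$ is large. I expect the main obstacle to be the band-size estimate from the first paragraph — equivalently, the facts that every band of $\sigma_k^0$ meets $\Sigma_\lambda$ and that the bands shrink at the expected exponential rate — which must be drawn from the hyperbolicity of $\Lambda_\lambda$ and Theorem~\ref{t.transversal} in a manifestly non-circular way; the remaining pieces (uniformity of $k'$ and $k''$, transversality of $W^u$ to $C_\lambda$, and the curve-versus-unstable-manifold expansion comparison) are already available from the proof of Proposition~\ref{p.first}.
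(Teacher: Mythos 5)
Your plan does correctly mirror the endgame of Proposition~\ref{p.first} — once you have the orbit of $\ell_\lambda(E_k)$ confined near $\Lambda_\lambda$ for the middle stretch, the Inclination Lemma, transversality of $W^u$ to $C_\lambda$, the $C^1$-closeness of $\tau_{k'+j}$ to $W^u_\delta$, and \cite[Proposition~6.4.16]{KH} give the derivative comparison exactly as you describe. But the route you propose to that confinement has a genuine gap, and it is the one you flag yourself: the ``band-size estimate'' $|E_k - E'| \lesssim \|DT^k(p)|_{E^u_p}\|^{-1}$ is precisely the kind of quantitative statement that Propositions~\ref{p.first}, \ref{p.second} and \ref{p.transportbounds}(b) are being combined to \emph{produce} (via Proposition~\ref{p.xkprimelyap}). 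To make it an \emph{input} you would need to identify the band $I_k = B_k^{(j)}(\delta)\cap\R$ with a $k$-th level cylinder of the dynamically defined Cantor set $\ell_\lambda \cap W^s(\Lambda_\lambda)$, and that identification is not free: $I_k$ is the component of $\{|x_k|\le 1+\delta\}$, a condition only on the $k$-th iterate, not a Markov cylinder condition on the whole itinerary. Reconciling the two requires either the Raymond-type band-nesting structure (only available for $\lambda>4$ in the paper) or an argument that effectively reproves the estimate you are using. There is also a subtler issue with the shadowing computation: the bound $\mathrm{dist}(T^i\ell_\lambda(E_k),T^i\ell_\lambda(E'))\lesssim |E_k-E'|\,\|DT^i(p)|_{E^u_p}\|$ already presupposes bounded distortion along the orbit of $\ell_\lambda(E_k)$, i.e.\ that this orbit stays near $\Lambda_\lambda$, which is the very thing the shadowing is meant to establish; this is fixable by a continuity/bootstrapping argument but is an additional wrinkle you don't address.

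The paper avoids all of this with a soft escape-time argument that needs no quantitative band estimate at all. It fixes a large ball $B_\lambda$ containing $\Lambda_\lambda$, $C_\lambda$ and $W^s(\Lambda_\lambda)\cap\ell_\lambda$, and a Markov neighborhood $U(\Lambda_\lambda)$ with two properties: (1) any orbit in $B_\lambda$ that never meets $U(\Lambda_\lambda)$ leaves $B_\lambda$ within a bounded time $k_1$, and (2) once an orbit leaves $U(\Lambda_\lambda)$ it never returns. Since $x_k(E_k)=0$ means $T^k(\ell_\lambda(E_k))\in C_\lambda\subset B_\lambda$, the orbit cannot have permanently escaped before time $k$; properties (1)--(2) then force $T^n(\ell_\lambda(E_k))\in U(\Lambda_\lambda)$ for all $k'\le n\le k-k_1$, where $k'$ is a uniform constant with $\bigcap_{|n|\le k'}T^n(B_\lambda\cap S_\lambda)\subset U(\Lambda_\lambda)$ (which exists because $\Lambda_\lambda$ is exactly the set of bounded orbits). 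It then picks $\bar p\in\Lambda_\lambda$ with the same Markov itinerary as $T^{k'}(\ell_\lambda(E_k))$ over that window, invokes the Anosov Closing Lemma for exponential shadowing, and sets $p=T^{-k'}(\bar p)$. The point you should take away: the dichotomy \emph{stay near $\Lambda_\lambda$ or escape for good} already does the confinement work, making the band-size estimate unnecessary rather than a prerequisite.
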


\begin{proof}
Let us choose a ball $B_\lambda \subset \R^3$ of sufficiently large radius so that $\Lambda_\lambda \subset B_\lambda, W^s(\Lambda_\lambda) \cap \ell_\lambda \subset B_\lambda$, and $C_\lambda := \{ z = 0 \} \cap S_\lambda \subset B_\lambda$. There exist a neighborhood $U(\Lambda_\lambda)$ and $k_1 \in \mathbb{N}$ such that
\begin{enumerate}

\item if $x \in B_\lambda$ and $\mathcal{O}^+(x)\cap U(\Lambda_\lambda) = \emptyset$, then $T^n(x) \not\in B_\lambda$ for all $n > k_1$;

\item if $x \in U(\Lambda_\lambda)$ and $T(x) \not \in U(\Lambda_\lambda)$, then $\mathcal{O}^+(T(x)) \cap U(\Lambda_\lambda) = \emptyset$;

\item $U(\Lambda_\lambda)$ is inside of $\delta$-neighborhood of $\Lambda_\lambda$, where $\delta$ small enough so that Anosov Closing Lemma type arguments (more specifically, Proposition~6.4.16 from \cite{KH}) can be applied.

\end{enumerate}
Such a neighborhood $U(\Lambda_\lambda)$ can be constructed by taking a union of open rectangles around elements of a Markov partition for $\Lambda_\lambda$ so that the usual properties of Markov partitions that allow one to use coding can be applied. Slightly abusing terminology we will refer to those rectangles as the elements of a Markov partition. This will ensure that property~(2) holds. Property~(1) holds for sufficiently large $k_1$ since $\Lambda_\lambda$ is the set of bounded orbits of the map $T_\lambda$. Indeed, $\overline{B_\lambda}\backslash U(\Lambda_\lambda)$ is compact, and if (1) does not hold, one can find a sequence of points in $\overline{B_\lambda}\backslash U(\Lambda_\lambda)$ whose long finite orbits (both positive and negative) are also in that set. Any limit point would have to have a bounded orbit, but this is a contradiction since $\Lambda_\lambda$ is the set of bounded orbits of the map $T_\lambda$.

Let $k'\in \Z_+$ be such that $\bigcap_{-k' \le n \le k'} T^n(B_\lambda \cap S_\lambda) \subset U(\Lambda_\lambda)$. In this case if $x_k(E_k) = 0$ for $k \gg \max(k_1,k')$, then $T^{k'}(\ell_\lambda(E_k)) \in U_\lambda$, and also $T^n(\ell_\lambda(E_k)) \in U_\lambda$ for $n= k'+1, \ldots, k-k_1$. By the choice of $B_\lambda$, we have $C_\lambda\subset B_\lambda$, and since $x_k(E_k)=0$, we also have $T^k(\ell_\lambda(E_k))\in C_\lambda\subset B_\lambda$. Set $P = T^{k'}(\ell_\lambda(E_k))$, $T^i(P) \in U(\Lambda_\lambda)$ for all $0 \le i \le k-k_1-k'$. Let $\bar{p} \in \Lambda_\lambda$ be any point that has the same symbolic dynamics over the finite time interval of length $k-k_1-k'$. In other words, $\bar{p}$ is such that $T^i(\bar{p})$ and $T^i(P)$ belong to the same element of the Markov partition of $\Lambda_\lambda$ for $i=0, 1, \ldots, k-k'-k_1$. In this case $\mathrm{dist} (T^i(\bar{p}), T^i(P)) \le \delta$ for $i=0, 1, \ldots, k-k'-k_1$. This implies (see Proposition~6.4.16 from \cite{KH}) that in fact $\mathrm{dist} (T^j(\bar{p}), T^j(P)) \le C \rho^{\min(j, m-j)}\delta$ for some $\rho < 1$, where $m = k-k'-k_1$ and $0 \le j \le m$. Distortion estimates imply now that
$$
\left|\log \|DT^m(\bar{p})|_{E^u_{\bar{p}}}\|- \log \|DT^m(P)|_{T^{k'+k_1}(\ell_\lambda)}\|\right|\le C,
$$
where the constant $C$ is independent of $m$. Take $p = T^{-k'}(\bar{p})$. Then, for some $C'$ independent of $m$, we have
\begin{multline*}
 \left|\log \|DT^k({p})|_{E^u_{p}}\|-\log \|DT^m(\bar{p})|_{E^u_{\bar{p}}}\|\right|+\\
 +\left|\log \|DT^k(\ell_\lambda(E_k))|_{\ell_\lambda}\|-\log \|DT^m(P)|_{T^{k'+k_1}(\ell_\lambda)}\|\right|\le C'
\end{multline*}
and hence
$$
\left| \frac{1}{k} \log \|DT^k({p})|_{E^u_{p}}\| - \frac{1}{k} \log \|DT^k(\ell_\lambda(E_k))|_{\ell_\lambda}\| \right| \le \frac{(C+C')}{k} \le \varepsilon
$$
if $k$ is sufficiently large. Together with the fact that
$$
\left| \log |x'_k(E_k)| - \log \|DT^k(\ell_\lambda(E_k))|_{\ell_\lambda}\| \right| < C_1
$$
with $C_1$ independent of $k$,  this proves Proposition~\ref{p.second}.
\end{proof}

\begin{lemma}\label{l.clearin}
We have
$$
\lim_{k \to \infty} \frac{1}{k} \inf_{p \in \Lambda_\lambda} \log\|DT^k(p)|_{E^u_p}\| = \inf_{p \in \Lambda_\lambda} \mathrm{Lyap}^u(p) = \inf_{p \in Per(\Lambda_\lambda)} \mathrm{Lyap}^u(p).
$$
That is, the limit on the left-hand side exists and equals the other two expressions.
\end{lemma}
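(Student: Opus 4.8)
Throughout write $f_k(p)=\log\|DT^k_\lambda(p)|_{E^u_p}\|$ and $a_k=\inf_{p\in\Lambda_\lambda}f_k(p)$; since $\Lambda_\lambda$ is compact and $f_k$ is continuous, this infimum is attained at some $p_k\in\Lambda_\lambda$. The plan is first to establish existence of the limit on the left, then the easy inequality $L\le\inf_{\Lambda_\lambda}\mathrm{Lyap}^u\le\inf_{Per(\Lambda_\lambda)}\mathrm{Lyap}^u$, and finally the reverse inequality $\inf_{Per(\Lambda_\lambda)}\mathrm{Lyap}^u\le L$, after which the whole chain collapses. For existence: because $\Lambda_\lambda$ lies on the surface $S_\lambda$, the unstable bundle $E^u$ is one-dimensional, so $f_k$ is an \emph{additive} cocycle, $f_{k+l}(p)=f_k(p)+f_l(T^k_\lambda p)$. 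Since $\Lambda_\lambda$ is $T_\lambda$-invariant we have $f_k(p)\ge a_k$ and $f_l(T^k_\lambda p)\ge a_l$ for every $p\in\Lambda_\lambda$, hence $a_{k+l}\ge a_k+a_l$; the sequence $(a_k)$ is superadditive and bounded above by $k\max_{\Lambda_\lambda}f_1$, so by Fekete's lemma $L:=\lim_{k\to\infty}\tfrac1k a_k=\sup_{k\ge1}\tfrac1k a_k$ exists and is finite.

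The easy inequality is immediate: for any $p\in\Lambda_\lambda$ at which $\mathrm{Lyap}^u(p)=\lim_{k}\tfrac1k f_k(p)$ is defined — in particular for every periodic $p$ — the bound $a_k\le f_k(p)$ gives $L\le\mathrm{Lyap}^u(p)$, whence $L\le\inf_{p\in\Lambda_\lambda}\mathrm{Lyap}^u(p)\le\inf_{p\in Per(\Lambda_\lambda)}\mathrm{Lyap}^u(p)$.

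For the reverse inequality I would argue by periodic approximation, exactly in the spirit of the proof of Proposition~\ref{p.second}. For large $k$ take the minimizer $p_k$ with $f_k(p_k)=a_k$, and record the itinerary of the orbit segment $p_k,T_\lambda p_k,\dots,T_\lambda^{k-1}p_k$ with respect to a Markov partition of the horseshoe $\Lambda_\lambda$. By transitivity of the associated subshift of finite type there is a connecting word of some length $N$ \emph{independent of $k$} that turns this finite itinerary into a periodic one, producing a periodic point $q_k\in\Lambda_\lambda$ of period $k+N$ whose orbit $\delta$-shadows that of $p_k$ over the window $[0,k-1]$, with the exponential in/out estimate $\mathrm{dist}(T^j_\lambda q_k,T^j_\lambda p_k)\le C\rho^{\min(j,\,k-1-j)}\delta$. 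The distortion estimates on hyperbolic sets (Proposition~6.4.16 of \cite{KH}, used already above) then yield $|f_k(q_k)-f_k(p_k)|\le C$ with $C$ independent of $k$, while trivially $|f_{k+N}(q_k)-f_k(q_k)|\le N\max_{\Lambda_\lambda}f_1$. Hence
$$
\mathrm{Lyap}^u(q_k)=\frac{1}{k+N}f_{k+N}(q_k)=\frac{a_k+O(1)}{k+O(1)},
$$
which converges to $L$ as $k\to\infty$. Therefore $\inf_{p\in Per(\Lambda_\lambda)}\mathrm{Lyap}^u(p)\le\inf_k\mathrm{Lyap}^u(q_k)\le L$, and combined with the easy inequality all three quantities coincide.

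The only genuinely technical point is the $k$-uniform distortion bound $|f_k(q_k)-f_k(p_k)|\le C$; but since $q_k$ is obtained by closing up a genuine orbit segment inside the uniformly hyperbolic set $\Lambda_\lambda$, this is precisely the computation performed in the proof of Proposition~\ref{p.second}, so I expect no new obstacle. (An alternative, if one prefers the thermodynamic route: the additive-cocycle identity gives $L=-\lim_k\tfrac1k\sup_{p\in\Lambda_\lambda}\sum_{j=0}^{k-1}\phi(T^j_\lambda p)$ with $\phi=-f_1$; this limit equals $-\sup_\mu\int\phi\,d\mu=\inf_\mu\mathrm{Lyap}^u(\mu)$ over $T_\lambda$-invariant probability measures $\mu$, and the weak-$*$ density of periodic orbit measures in that space identifies $\inf_\mu\mathrm{Lyap}^u(\mu)$ with $\inf_{p\in Per(\Lambda_\lambda)}\mathrm{Lyap}^u(p)$, giving the same conclusion.)
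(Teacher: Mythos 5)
Your argument is correct, and the core of it is the same as the paper's: the hard direction $\inf_{p\in Per(\Lambda_\lambda)}\mathrm{Lyap}^u(p)\le L$ is obtained in both cases by taking a (near‑)minimizer $p_k$ of $\frac1k\log\|DT^k(\cdot)|_{E^u}\|$, closing its orbit segment into a periodic orbit $q_k$ of period $k+O(1)$ (you do this via the Markov coding plus a bounded‑length connecting word; the paper invokes the specification property, which is just a cleaner packaging of the same shadowing device), and then using the exponential in/out shadowing estimate from \cite[Prop.~6.4.16]{KH} together with smoothness of $E^u$ to get the uniform distortion bound $|f_k(q_k)-f_k(p_k)|\le C$. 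The genuine structural difference is your opening move: you observe that $a_k=\inf_{p\in\Lambda_\lambda}f_k(p)$ is superadditive (because $E^u$ is one‑dimensional, $f_k$ is an additive cocycle, and $\Lambda_\lambda$ is $T_\lambda$‑invariant) and invoke Fekete's lemma to get existence of $L=\lim_k a_k/k$ up front, after which the rest of the argument never needs to distinguish $\liminf$ from $\limsup$. The paper does not make this observation; instead it proves separately that $\liminf_k a_k/k \ge \inf_{Per}\mathrm{Lyap}^u$ (the shadowing step) and that $\limsup_k a_k/k \le \inf_{Per}\mathrm{Lyap}^u$ (by evaluating at a near‑minimal periodic point $p_0$), which simultaneously yields existence of the limit and the identities. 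Your Fekete step buys a modest conceptual simplification — the ``easy'' direction $L\le\inf_{Per}\mathrm{Lyap}^u$ becomes a one‑liner and you only need one genuinely technical inequality — while the paper's organization avoids introducing superadditivity at the cost of running two parallel estimates. Both are correct; the heavy lifting (shadowing plus distortion) is identical.
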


\begin{proof}
Notice that we certainly have
$$
\liminf_{k \to \infty} \frac{1}{k} \inf_{p \in \Lambda_\lambda} \log \|DT^k(p)|_{E^u_p}\| \le \inf_{p \in \Lambda_\lambda} \mathrm{Lyap}^u(p) \le \inf_{p \in Per(\Lambda_\lambda)} \mathrm{Lyap}^u(p).
$$
Let us show that
\begin{equation}\label{e.A}
A := \liminf_{k \to \infty} \frac{1}{k} \inf_{p \in \Lambda_\lambda} \log \|DT^k(p)|_{E^u_p}\| \ge \inf_{p \in Per(\Lambda_\lambda)} \mathrm{Lyap}^u(p).
\end{equation}
Fix an arbitrarily small $\varepsilon>0$.  There exist $k_j \to \infty$ and $p_j \in \Lambda_\lambda$ such that
$$
\frac{1}{k_j} \log \|DT^{k_j}(p_j)|_{E^u_{p_j}}\| \le A + \varepsilon.
$$
The specification property (see, for example, \cite[Theorem~18.3.9]{KH}) implies that for any $\delta > 0$, we can find a sequence of periodic orbits $\{q_j\}$ such that

\

1) $T^{k_j+M}(q_j)=q_j$, where $M\in \mathbb{N}$ is independent of $j\in \mathbb{N}$;

\

2) $\mathrm{dist} (T^i(q_j), T^i(p_j)) \le \delta$ for $i=0, \ldots, k_j-1$.

\

Now the quantitative version of the Anosov Closing Lemma (see, e.g., \cite[Proposition 6.4.16]{KH}) implies that in fact for some $\rho < 1$,
$$
\mathrm{dist} (T^i(q_j), T^i(p_j)) \le C \rho^{\min(i, k_j-i)} \delta.
$$

 The stable and unstable distributions of a two dimensional horseshoe are $C^1$ , see \cite[Corollary~19.1.11]{KH}. Now smoothness of the unstable bundle $\{E^u_x\}_{x\in \Lambda_\lambda}$
 allows us to use standard distortion estimates and hence to deduce that
$$
\log \|DT^{k_j}(q_j)|_{E^u_{q_j}}\| \le \log \|DT^{k_j}(p_j)|_{E^u_{p_j}}\| + C',
$$
where the constant $C'$ is independent of $j$. Hence for large enough $k_j$, we have
$$
\mathrm{Lyap}^u(q_j) = \frac{1}{k_j+M} \log \|DT^{k_j+M}(q_j)|_{E^u_{q_j}}\| \le \frac{1}{k_j} \log \|DT^k_j(p_j)|_{E^u_{p_j}}\| + \varepsilon + \frac{C'}{k_j} \le A + 3 \varepsilon.
$$
This implies that
$$
\inf_{p \in Per(\Lambda_\lambda)} \mathrm{Lyap}^u(p) \le A + 3 \varepsilon,
$$
and since $\varepsilon > 0$ can be chosen arbitrary small, we have
$$
\inf_{p \in Per(\Lambda_\lambda)} \mathrm{Lyap}^u(p) \le A.
$$
This completes the proof of the inequality \eqref{e.A}.

Now we need to show that
\begin{equation}\label{e.B}
B := \limsup_{k \to \infty} \frac{1}{k} \inf_{p \in \Lambda_\lambda} \log \|DT^k(p)|_{E^u_p}\| \le \inf_{p \in Per(\Lambda_\lambda)} \mathrm{Lyap}^u(p).
\end{equation}
Once again, fix an arbitrarily small $\varepsilon > 0$. Take a periodic point $p_0 \in Per(\Lambda_\lambda)$, $T^m(p_0) = p_0$, such that
$$
\mathrm{Lyap}^u(p_0) \le \inf_{p \in Per(\Lambda_\lambda)} \mathrm{Lyap}^u(p) + \varepsilon.
$$
For all sufficiently large $k$, we have
$$
\frac{1}{k} \log \|DT^k(p_0)|_{E^u_{p_0}}\| \le \mathrm{Lyap}^u(p_0) + \varepsilon \le \inf_{p \in Per(\Lambda_\lambda)} \mathrm{Lyap}^u(p) + 2 \varepsilon,
$$
hence
$$
\frac{1}{k} \inf_{p \in \Lambda_\lambda} \log \|DT^k(p)|_{E^u_p}\| \le \frac{1}{k} \log \|DT^k(p_0)|_{E^u_{p_0}}\| \le \mathrm{Lyap}^u(p_0) + \varepsilon \le \inf_{p \in Per(\Lambda_\lambda)} \mathrm{Lyap}^u(p) + 2 \varepsilon.
$$
Therefore
$$
B = \limsup_{k \to \infty} \frac{1}{k} \inf_{p \in \Lambda_\lambda} \log\|DT^k(p)|_{E^u_p}\| \le \inf_{p \in Per(\Lambda_\lambda)} \mathrm{Lyap}^u(p) + 2 \varepsilon,
$$
and since $\varepsilon > 0$ is arbitrary, we have $B \le \inf_{p \in Per(\Lambda_\lambda)} \mathrm{Lyap}^u(p)$. Together with \eqref{e.A} this completes the proof of Lemma~\ref{l.clearin}.
\end{proof}

As a direct corollary of Propositions~\ref{p.first} and \ref{p.second} and Lemma \ref{l.clearin} we get the following statement:

\begin{prop}\label{p.xkprimelyap}
We have
$$
\lim_{k \to \infty} \frac{1}{k} \log \min_{j=1, \ldots, F_k} \left| x'_k(E_k^{(j)}) \right| = \inf_{p \in Per(\Lambda_\lambda)} \mathrm{Lyap}^u(p).
$$
That is, we have that the limit on the left-hand side exists and that it is equal to the right-hand side.
\end{prop}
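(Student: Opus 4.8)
The plan is to deduce Proposition~\ref{p.xkprimelyap} almost formally from Propositions~\ref{p.first} and \ref{p.second} together with Lemma~\ref{l.clearin}; the analytic content has already been isolated in those three statements, so what remains is essentially bookkeeping with $\varepsilon$'s. Fix $\lambda > 0$, fix an arbitrary $\varepsilon > 0$, and abbreviate
$$
m_k := \frac{1}{k}\log \min_{j=1,\dots,F_k} \left| x'_k(E_k^{(j)}) \right|, \qquad I_k := \frac{1}{k}\inf_{p \in \Lambda_\lambda} \log \|DT^k(p)|_{E^u_p}\|.
$$
The goal is to show $|m_k - I_k| \le \varepsilon$ for all sufficiently large $k$. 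Once this is done, $\limsup_k m_k$ and $\liminf_k m_k$ coincide with $\limsup_k I_k$ and $\liminf_k I_k$ respectively (up to $\varepsilon$, hence exactly since $\varepsilon$ is arbitrary), and Lemma~\ref{l.clearin} tells us that $\lim_k I_k$ exists and equals $\inf_{p \in Per(\Lambda_\lambda)} \mathrm{Lyap}^u(p)$, which is the assertion.

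First I would establish $m_k \le I_k + \varepsilon$. By compactness of $\Lambda_\lambda$ and continuity of $p \mapsto \|DT^k(p)|_{E^u_p}\|$, choose $p_k \in \Lambda_\lambda$ with $\frac{1}{k}\log \|DT^k(p_k)|_{E^u_{p_k}}\| = I_k$. Applying Proposition~\ref{p.first} to $p_k$ yields, for all $k$ beyond some $k_0(\varepsilon)$, a real zero $E_k$ of $x_k$ with $\frac{1}{k}\log|x'_k(E_k)| \le I_k + \varepsilon$; by Lemma~\ref{l.components} every real zero of $x_k$ is one of $E_k^{(1)} < \cdots < E_k^{(F_k)}$, so $E_k = E_k^{(j)}$ for some $j$, and therefore $m_k \le \frac{1}{k}\log|x'_k(E_k^{(j)})| \le I_k + \varepsilon$.

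Next the reverse inequality $m_k \ge I_k - \varepsilon$. Let $j^\ast$ be an index realizing the minimum defining $m_k$. Applying Proposition~\ref{p.second} to $E_k^{(j^\ast)}$ (legitimate since $x_k(E_k^{(j^\ast)}) = 0$) produces, for all large $k$, a point $p \in \Lambda_\lambda$ with $\frac{1}{k}\log|x'_k(E_k^{(j^\ast)})| \ge \frac{1}{k}\log\|DT^k(p)|_{E^u_p}\| - \varepsilon \ge I_k - \varepsilon$, i.e.\ $m_k \ge I_k - \varepsilon$. Combining the two one-sided bounds gives $|m_k - I_k| \le \varepsilon$ for all $k$ large, and the proof concludes as explained above.

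I do not expect a genuine obstacle here: Propositions~\ref{p.first}, \ref{p.second} and Lemma~\ref{l.clearin} do all the work. The only points deserving a moment of care are (a) that the $k_0$ in Propositions~\ref{p.first} and \ref{p.second} is uniform in $p$, respectively in the choice of zero $E_k$ — which is exactly how those statements are phrased — so that the selection of $p_k$ and of the minimizing index $j^\ast$ causes no trouble; and (b) the remark, drawn from Lemma~\ref{l.components}, that $\min_{j}|x'_k(E_k^{(j)})|$ really is the minimum over all real zeros of $x_k$, so that the two propositions (which speak of "a real zero" and "any real zero") mesh correctly with the quantity $m_k$ appearing in the statement.
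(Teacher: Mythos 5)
Your proof is correct and follows exactly the route the paper intends: the paper declares Proposition~\ref{p.xkprimelyap} to be ``a direct corollary of Propositions~\ref{p.first} and \ref{p.second} and Lemma~\ref{l.clearin}'' without writing out the $\varepsilon$-bookkeeping, and your argument supplies precisely that bookkeeping, using Proposition~\ref{p.first} at a (near-)minimizer $p_k$ for the one-sided bound $m_k \le I_k + \varepsilon$ and Proposition~\ref{p.second} at a minimizing zero $E_k^{(j^\ast)}$ for the other direction, then invoking Lemma~\ref{l.clearin} for the existence and identification of $\lim_k I_k$. No gaps; the uniformity-in-$p$ and uniformity-in-$E_k$ of the thresholds $k_0$ in the two propositions, which you flag as the points needing care, are indeed the only places where something could go wrong, and both are stated explicitly in those propositions.
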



Recall that we considered above the sets $\sigma_k^\delta$ and their connected components $B_k^{(j)}(\delta)$. Define further
\begin{align*}
r_k^{(j)}(\delta) & = \sup \{ r > 0 : B(E_k^{(j)} , r) \subseteq B_k^{(j)}(\delta) \}, \quad \; \;
r_k(\delta) = \max_{j = 1,\ldots,F_k} r_k^{(j)}(\delta), \\
R_k^{(j)}(\delta) & = \inf \{ R > 0 : B(E_k^{(j)} , R) \supseteq B_k^{(j)}(\delta) \}, \quad
R_k(\delta) = \max_{j = 1,\ldots,F_k} R_k^{(j)}(\delta).
\end{align*}
The identity \eqref{e.transportexponentidentity} will follow from Proposition~\ref{p.xkprimelyap} and the following proposition.

\begin{prop}\label{p.transportbounds}
{\rm (a)} For every $\lambda > 0$ and $\delta \in (0,\delta(\lambda))$, we have
\begin{equation}\label{e.alphaulowerbound}
\tilde \alpha_u^- \ge \frac{\log \varphi}{\limsup_{k \to \infty} \frac{1}{k} \log \frac{1}{r_k(\delta)}}
\end{equation}
and
\begin{equation}\label{e.alphauupperbound}
\tilde \alpha_u^+ \le \frac{\log \varphi}{\liminf_{k \to \infty} \frac{1}{k} \log \frac{1}{R_k(\delta)}}.
\end{equation}

{\rm (b)} For every $\lambda > 0$ and $\delta \in (0,\delta(\lambda)/2)$, we have
\begin{equation}\label{e.rkRkxprimeconnection1}
\frac{1}{R_k(\delta)} \ge \left(\frac{\delta}{(1 + \delta)(1 + 2\delta)} \right)^2 \left( \min_j |x_k'(E_k^{(j)})| \right)
\end{equation}
and
\begin{equation}\label{e.rkRkxprimeconnection2}
\frac{1}{r_k(\delta)} \le \frac{(2 + 3\delta)^2}{(1 + \delta)(1 + 2 \delta)^2} \left( \min_j |x_k'(E_k^{(j)})| \right)
\end{equation}
for every $k \ge 0$.

{\rm (c)} For $\lambda > 0$ and $\delta \in (0,\delta(\lambda)/2)$, we have
$$
\tilde \alpha_u^- \ge \frac{\log \varphi}{\limsup_{k \to \infty} \frac{1}{k} \log \left( \min_{j = 1,\ldots,F_k} \left| x_k'(E_k^{(j)}) \right| \right)}.
$$
and
$$
\tilde \alpha_u^+ \le \frac{\log \varphi}{\liminf_{k \to \infty} \frac{1}{k} \log \left( \min_{j = 1,\ldots,F_k} \left| x_k'(E_k^{(j)}) \right| \right)}.
$$
\end{prop}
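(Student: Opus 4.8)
The three parts will be established in order; part~(a) carries the analytic content, while parts~(b) and~(c) are essentially formal consequences of it together with classical complex-analytic distortion estimates.

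For part~(a) the plan is to run the quantum-dynamical machinery of Damanik and Tcheremchantsev \cite{DT07, DT08}, in the sharpened form employed in \cite{DG14}. The starting point is S\"ut\H{o}'s trace-map identity: the transfer matrix of $H_{\lambda,\omega}$ over a block of length $F_k$ has trace $2 x_k(E)$, so that for small $\delta$ the component $B_k^{(j)}(\delta)$ is precisely the cluster of complex energies $z$ near $E_k^{(j)}$ at which the $F_k$-step transfer matrix $M(F_k, z)$ remains bounded, while for $z \notin \sigma_k^\delta$ one has exponential growth of $\|M(F_k, z)\|$ in $k$. Feeding this dichotomy into the Parseval-type formula of \cite{DT10}, which expresses the time-averaged moments $\langle\langle |X|_{\delta_0}^p \rangle\rangle(T)$ through integrals over energy of $\|M(n, E + i/T)\|^{-2}$, one finds that at time $T$ the fast part of the wave packet reaches $\sim F_k$ sites precisely when the time resolution $1/T$ is comparable to the relevant band radius at scale $k$; since $F_k \sim \varphi^k$, choosing $T \sim 1/r_k(\delta)$ along the subsequence realizing $\limsup_k \frac1k \log \frac1{r_k(\delta)}$ produces a lower bound of the shape $\frac{\log F_k}{\log T} \approx \frac{\log\varphi}{\frac1k \log \frac1{r_k(\delta)}}$ valid for all large $T$, which is \eqref{e.alphaulowerbound}, and the complementary estimate that reaching scale $F_k$ costs time at least $\sim 1/R_k(\delta)$ gives \eqref{e.alphauupperbound}. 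The structural facts that previously forced $\lambda$ to be small --- the clean band picture of Lemma~\ref{l.components} and the transversality underlying it --- now hold for all $\lambda > 0$ by Theorem~\ref{t.transversal}, so the arguments of \cite{DT07, DT08, DG14} go through with only bookkeeping changes. This is where I expect the main obstacle to lie: one must control $\|M(n,z)\|$ uniformly for $n$ up to a fixed multiple of $F_k$ and $\mathrm{Im}\, z$ comparable to the band radii, combining the Chebyshev-type structure of $x_k$ with a Combes--Thomas-type decay estimate and the polynomial bound inside the bands.

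For part~(b) we first note that each $B_k^{(j)}(\delta)$ is a topological disc. Indeed, being a bounded component of $\C \setminus \{ |x_k| = 1+\delta \}$ on which, by the maximum modulus principle, $|x_k|$ cannot exceed $1+\delta$, it can contain no hole, and by Lemma~\ref{l.components} it contains exactly one zero of $x_k$, namely $E_k^{(j)}$. Hence $x_k$ restricts to a proper holomorphic map $B_k^{(j)}(\delta) \to \{ |w| < 1+\delta \}$ of degree one, i.e.\ a biholomorphism with $x_k(E_k^{(j)}) = 0$ and $x_k'(E_k^{(j)}) \ne 0$. Applying Koebe's distortion and quarter theorems and the Schwarz lemma to this map and to its inverse --- and using the hypothesis $\delta < \delta(\lambda)/2$ to pass from the disc of radius $1+\delta$ to the disc of radius $1+2\delta$, which is what pins down the explicit constants --- we obtain, for each $j$, inequalities of the form $1/R_k^{(j)}(\delta) \ge c(\delta)\, |x_k'(E_k^{(j)})|$ and $1/r_k^{(j)}(\delta) \le C(\delta)\, |x_k'(E_k^{(j)})|$. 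Taking the maximum over $j$ and recalling that $1/R_k(\delta) = \min_j 1/R_k^{(j)}(\delta)$ and $1/r_k(\delta) = \min_j 1/r_k^{(j)}(\delta)$ yields \eqref{e.rkRkxprimeconnection1} and \eqref{e.rkRkxprimeconnection2}. These steps are routine and we do not belabor the optimization of the constants.

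Part~(c) is then immediate: taking logarithms in \eqref{e.rkRkxprimeconnection1} and \eqref{e.rkRkxprimeconnection2}, dividing by $k$, and observing that the $\delta$-dependent constants contribute a term $O(1/k) \to 0$, we get $\limsup_k \frac1k \log \frac1{r_k(\delta)} \le \limsup_k \frac1k \log \bigl( \min_j |x_k'(E_k^{(j)})| \bigr)$ and $\liminf_k \frac1k \log \frac1{R_k(\delta)} \ge \liminf_k \frac1k \log \bigl( \min_j |x_k'(E_k^{(j)})| \bigr)$; substituting into part~(a) gives the two displayed inequalities. Combined afterwards with Proposition~\ref{p.xkprimelyap}, where the limit exists so that $\limsup$ and $\liminf$ coincide, this yields $\tilde\alpha_u^- \ge \frac{\log\varphi}{\inf_{p \in Per(T_\lambda)} \mathrm{Lyap}^u(p)} \ge \tilde\alpha_u^+$, and since trivially $\tilde\alpha_u^- \le \tilde\alpha_u^+$, the identity \eqref{e.transportexponentidentity} follows.
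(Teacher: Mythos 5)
Your proposal is correct and follows essentially the same route as the paper: part (a) rests on the Parseval formula together with the transfer-matrix/trace-map dichotomy from \cite{DT07, DT08} (polynomial bounds on $\|M(n;\omega,z)\|$ inside $\sigma_k^\delta$, exponential growth outside), part (b) applies the Koebe distortion theorem to the univalent inverse of $x_k$ on $B_k^{(j)}(2\delta)$ exactly as you describe, and part (c) follows by taking logarithms and letting the $\delta$-dependent constants vanish. One small clarification on part (a): the $\limsup_k$ and $\liminf_k$ enter because the exponent $s$ (with $C_\delta F_k^s \ge 2/r_k(\delta)$, resp.\ $R_k(\delta) < C_\delta' F_k^{-s'}$) must be chosen so that the transfer-matrix estimate holds uniformly for \emph{all} $k$, not merely along a subsequence, which is what yields a statement valid for all large $T$ as the definitions of $\tilde\alpha_u^{\mp}$ (as $\liminf_T$ and $\limsup_T$) demand; your phrasing ``along the subsequence realizing $\limsup_k$'' inverts this logic slightly, although it arrives at the right bound.
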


\begin{proof}
(a) The strategy of proving \eqref{e.alphaulowerbound} and \eqref{e.alphauupperbound} is inspired by \cite{DG14, DT07, DT08}. The Parseval identity implies (see, e.g., \cite[Lemma~3.2]{kkl})
\begin{equation}\label{e.parsform}
2\pi \int_0^{\infty} e^{-2t/T} | \langle \delta_n , e^{-itH} \delta_0 \rangle |^2 \, dt = \int_{-\infty}^\infty \left|\langle \delta_n  , (H - E - \tfrac{i}{T})^{-1} \delta_0 \rangle \right|^2 \, dE,
\end{equation}
and hence for the time averaged outside probabilities, defined by
\begin{equation}\label{e.taop}
\langle P(N,\cdot) \rangle (T) = \frac{2}{T} \int_0^{\infty} e^{-2t/T} \sum_{|n| \ge N} | \langle \delta_n , e^{-itH} \delta_0 \rangle |^2 \, dt,
\end{equation}
we have
\begin{equation}\label{e.taopresform}
\langle P(N,\cdot) \rangle (T) = \frac{1}{\pi T} \sum_{|n| \ge N} \int_{-\infty}^\infty \left|\langle \delta_n  , (H - E - \tfrac{i}{T})^{-1} \delta_0 \rangle \right|^2 \, dE.
\end{equation}
The right-hand side of \eqref{e.taopresform} may be studied by means of transfer matrices at complex energies, which are defined as follows. For $z \in \C$, $n \in \Z$, we set
$$
M(n;\omega,z) = \begin{cases} T(n;\omega,z) \cdots T(1;\omega,z) & n \ge 1, \\ T(n;\omega,z)^{-1} \cdots T(-1;\omega,z)^{-1} & n \le -1, \end{cases}
$$
where
$$
T(\ell;\omega,z) = \begin{pmatrix} z - \lambda \chi_{[1-\alpha,1)}(\ell \alpha + \omega \!\!\!\! \mod 1) & -1 \\ 1 & 0 \end{pmatrix}.
$$
The following statement follows from \cite[Proposition~2]{DT08}: For every $\lambda , \delta > 0$, there are constants $C,\xi$ such that for every $k$, every $z \in \sigma_k^\delta$, and every $\omega \in \T$, we have
\begin{equation}\label{e.2sidedpowerlaw}
\| M(n;\omega,z) \| \le C n^\xi.
\end{equation}
for $1 \le |n| \le F_k$. Combining ideas from the proof of \cite[Proposition~2]{DT08} and the proof of \cite[Theorem~5.1]{DG11}, one can show the following for the exponent $\xi$ in \eqref{e.2sidedpowerlaw}. If we denote the largest root of the polynomial $x^3 - (2+\lambda) x - 1$ by $a_\lambda$ (note that for small $\lambda > 0$, we have $a_\lambda \approx \varphi + c\lambda$ with a suitable constant $c$), then for any
\begin{equation}\label{e.gammaest}
\xi > 2 \frac{\log [(5 + 2\lambda)^{1/2} (3 + \lambda) a_\lambda]}{\log \varphi},
\end{equation}
there is a constant $C$ such that \eqref{e.2sidedpowerlaw} holds for $z \in \sigma_k^\delta$ and $\omega \in \T$.

Let us now consider $\lambda > 0$, $\delta \in (0,\delta(\lambda))$, and $\varepsilon > 0$. Consider the value of $j \in \{1, \ldots , F_k\}$ with $r_k^{(j)}(\delta) = r_k(\delta)$. By definition, $E_k^{(j)}$ is the only zero of $x_k$ in $B_k^{(j)}(\delta)$.

For $\rho > 0$ arbitrary, consider
\begin{equation}\label{e.sdef}
s = \frac{\limsup_{k \to \infty} \frac1k \log \frac{1}{r_k(\delta)}}{\log \varphi} + \rho.
\end{equation}
Clearly, $s$ is strictly positive. By definition of $s$, for suitably chosen $C_\delta > 0$, we have
\begin{equation}\label{e.sprop}
C_\delta F_k^{s} \ge \frac{2}{r_k(\delta)}
\end{equation}
for every $k \ge 0$.

Take $N = F_k$ and consider $T \ge C_\delta N^{s}$ (which in turn implies $T \ge \frac{2}{r_k(\delta)}$ by \eqref{e.sprop}). Due to the Parseval formula \eqref{e.parsform}, we can bound the time-averaged outside probabilities from below as follows,
\begin{equation}\label{parseval}
\langle P(N,\cdot) \rangle (T) \gtrsim \frac1T \int_\R \left( \max \left\{ \|M(N;\omega,E+i/T)\|, \|M(-N;\omega,E+i/T)\| \right\} \right)^{-2} \, dE.
\end{equation}
See, for example, the proof of \cite[Theorem~1]{DT03} for an explicit derivation of \eqref{parseval} from \eqref{e.parsform}.

To bound the integral from below, we integrate only over those $E \in (E_k^{(j)}-r_k(\delta), E_k^{(j)}+r_k(\delta))$ for which $E+i/T \in B(E_k^{(j)}, r_k(\delta)) \subset B_k^{(j)}(\delta)$. Since $\frac{1}{T} \le \frac{r_k(\delta)}{2}$, the length of such an interval $I_k$ is larger than $cr_k(\delta)$ for some suitable $c > 0$. For $E \in I_k$, we have
$$
\|M(N;\omega, E+i\varepsilon)\| \lesssim N^{\xi} \lesssim T^{\frac{\xi}{s}}.
$$
Therefore, \eqref{parseval} together with \eqref{e.2sidedpowerlaw} gives
\begin{equation}\label{eq1}
\langle P(N,\cdot) \rangle (T) \gtrsim \frac{r_k}{T} \, T^{-\frac{2\xi}{s}} \gtrsim T^{-2-\frac{2\xi}{2}},
\end{equation}
where $N = F_k$, $T \ge C_\delta N^s$, for any $k \ge k_0$.

Now let us take any sufficiently large $T$ and choose $k$ maximal with $C_\delta F_k^s \le T$. Then,
$$
C_\delta F_k^s \le T < C_\delta F_{k+1}^s \le 2^s C_\delta F_k^s.
$$
It follows from \eqref{eq1} that
$$
\left\langle P \left( \tfrac{1}{2 C_\delta^{1/s}} T^{\frac{1}{s}},\cdot \right) \right\rangle (T) \ge \langle P(F_k,\cdot) \rangle (T) \gtrsim T^{-2-\frac{2\xi}{s}}
$$
for all sufficiently large $T$. It follows from the definition of $\tilde \beta^-(p)$ and $\tilde \alpha_u^-$ that
$$
\tilde \beta_{\delta_0}^-(p) \ge \frac{1}{s} - \frac{2}{p}\left( 1 + \frac{\xi}{s} \right)
$$
and
$$
\tilde \alpha_u^- \ge \frac{1}{s} = \left( \frac{\limsup_{k \to \infty} \frac1k \log \frac{1}{r_k(\delta)}}{\log \varphi} + \rho \right)^{-1},
$$
by \eqref{e.sdef}. Since $\rho > 0$ can be taken arbitrarily small, this proves \eqref{e.alphaulowerbound}.

\bigskip

Let us recall \cite[Lemma~4]{DT07}: Given any $\delta > 0$ and $E \in \C$, a necessary and sufficient condition for $\{x_k(E)\}_{k \ge -1}$ to be unbounded is that
\begin{equation}\label{critN}
|x_{K-1}(E)| \le 1 + \delta, \quad |x_{K}(E)| > 1 + \delta, \quad |x_{K+1}(E)| > 1 +
\delta
\end{equation}
for some $K \ge 0$. This $K$ is unique. Moreover, in this case we have
\begin{equation}\label{expgrowth}
|x_{K+k}(E)| \ge (1 + \delta)^{F_k} \text{ for } k \ge 0.
\end{equation}

By definition of $R_k(\delta)$, we have
$$
\sigma_k^\delta \subseteq \{ z \in \C : |\mathrm{Im} \ z| \le R_k(\delta) \}.
$$
We set
\begin{equation}\label{e.sprimedef}
s' = \frac{\liminf_{k \to \infty} \frac1k \log \frac{1}{R_k(\delta)}}{\log \varphi} - \rho'
\end{equation}
for $\rho' > 0$ small enough so that $s' > 0$ (Proposition~\ref{p.xkprimelyap} shows that it is possible to find such a $\rho'$ since the right-hand side in that proposition is positive as $\Lambda_\lambda$ is a hyperbolic set), and then choose some suitable $C_\delta' > 0$, so that we have
$$
R_k(\delta) < C_\delta' F_k^{- s'},
$$
for every $k \ge 0$. In particular,
\begin{equation}\label{imwidth}
\sigma_k^\delta \cup \sigma_{k+1}^\delta \subseteq \{ z \in \C : |\mathrm{Im} \ z| < C_\delta' F_k^{- s'} \}.
\end{equation}

For each $\varepsilon = \mathrm{Im} \ z > 0$, one obtains lower bounds on $|x_k(E+i\varepsilon)|$ which are uniform for $E \in [-K,K] \subseteq \R$. Namely, given $\varepsilon > 0$, choose $k$ minimal with the property $C_\delta' F_k^{- s'} < \varepsilon$. By \eqref{imwidth}, we infer that $|x_k(E+i\varepsilon)| > 1 + \delta$ and $|x_{k+1}(E+i\varepsilon)| > 1 + \delta$. Since $|x_{-1}(E+i\varepsilon)| = 1 \le 1 + \delta$, we must have the situation of \cite[Lemma~4]{DT07} (as recalled above) for some $K \le k$. In particular, for $k' > k$, \eqref{expgrowth} shows that
$$
|x_{k'}(E+i\varepsilon)| \ge (1 + \delta)^{F_{k'-k}}.
$$

This motivates the following definitions. Fix some small $\delta > 0$. For $T > 1$, denote by $k(T)$ the unique integer with
$$
\frac{F_{k(T) - 1}^{s'}}{C_\delta'} \le T < \frac{F_{k(T)}^{s'}}{C_\delta'}
$$
and let
$$
N(T) = F_{k(T) + \lfloor \sqrt{k(T)} \rfloor}.
$$
Thus, for every $\tilde \nu > 0$, there is a constant $C_{\tilde \nu} > 0$ such that
\begin{equation}\label{ntfib}
N(T) \le C_{\tilde \nu} T^\frac{1}{s'} T^{\tilde \nu}.
\end{equation}
It follows from \cite[Theorem~7]{DT07} and the argument above that\footnote{This estimate obviously works for $\omega = 0$ since then the trace and the norm are directly related. For general $\omega$, one can use the arguments developed in \cite{D05}. The central idea is that the trace of words of length $F_k$ occurring in the Fibonacci sequence is the same for all but one word and is given by $2x_k$. If the word in question is the ``bad'' one, we can simply shift by one to see a good word, derive the estimate there and divide by $C^2$, where $C$ bounds the norm of a one-step transfer matrix.}
\begin{align*}
\langle P(N(T),\cdot) \rangle (T) & \lesssim \exp (-c N(T)) + T^3 \int_{-K}^K \left( \max_{3 \le n \le N(T)} \left\| M \left( n; \omega, E+\tfrac{i}{T} \right) \right\|^2 \right) ^{-1} dE \\
& \lesssim \exp (-c N(T)) + T^3 (1 + \delta)^{-2 F_{\lfloor \sqrt{k(T)} \rfloor}}.
\end{align*}
(We can estimate the norm on the left half-line in a completely analogous way.) From this bound, we see that $\langle P(N(T),\cdot) \rangle (T)$ goes to zero faster than any inverse power of $T$. Therefore we can apply \cite[Theorem~1]{DT07} and obtain from \eqref{ntfib} that
$$
\tilde \alpha_u^+ \le \frac{1}{s'} + \tilde \nu = \left( \frac{\liminf_{k \to \infty} \frac1k \log \frac{1}{R_k(\delta)}}{\log \varphi} - \rho' \right)^{-1} + \tilde \nu.
$$
Since we can take $\rho' > 0$ and $\tilde \nu > 0$ arbitrarily small, \eqref{e.alphauupperbound} follows.

\bigskip

(b) Let $\lambda > 0$ and choose $\delta \in (0,\delta(\lambda)/2)$. Fix $k$ and $j$, and consider the connected component $B_k^{(j)}(2\delta)$ of $\sigma_k^{2\delta}$. Since $B_k^{(j)}(2\delta)$ contains exactly one zero of $x_k$, it follows from the maximum modulus principle and Rouch\'e's Theorem that
$$
x_k : \mathrm{int}(B_k^{(j)}(2\delta)) \to B(0,1 + 2\delta)
$$
is univalent, and hence
$$
x_k^{-1} : B(0,1 + 2\delta) \to \mathrm{int}(B_k^{(j)}(2\delta))
$$
is well-defined and univalent as well. Consequently, the following mapping is a Schlicht function:
$$
F : B(0,1) \to \C, \quad F(z) = \frac{x_k^{-1} ((1 + 2\delta)z) - E_k^{(j)}}{(1 + 2\delta) [(x_k^{-1})'(0)]}.
$$
That is, $F$ is a univalent function on $B(0,1)$ with $F(0) = 0$ and $F'(0) = 1$.

The Koebe Distortion Theorem (see \cite[Theorem~7.9]{c}) implies that
\begin{equation}\label{koebe}
\frac{|z|}{(1 + |z|)^2} \le |F(z)| \le \frac{|z|}{(1 - |z|)^2} \text{ for } |z| \le 1.
\end{equation}
Evaluate the bound \eqref{koebe} on the circle $|z| = \frac{1 + \delta}{1 + 2\delta}$. For such $z$, we obtain
$$
\frac{(1 + \delta)(1 + 2\delta)}{(2 + 3 \delta)^2} \le |F(z)| \le \frac{(1 + \delta)(1 + 2\delta)}{\delta^2}.
$$
By definition of $F$ this means that
$$
| x_k^{-1} ((1 + 2\delta)z) - E_k^{(j)} | \le \frac{(1 + \delta)(1 + 2\delta)}{\delta^2} (1 + 2\delta) |(x_k^{-1})'(0)|
$$
and
$$
| x_k^{-1} ((1 + 2\delta)z) - E_k^{(j)} | \ge \frac{(1 + \delta)(1 + 2\delta)}{(2 + 3\delta)^2} (1 + 2\delta) |(x_k^{-1})'(0)|
$$
for all $z$ with $|z| = \frac{1 + \delta}{1 + 2\delta}$. In other words, if $|z| = 1 + \delta$, then
\begin{equation}\label{variation}
| x_k^{-1} (z) - E_k^{(j)} | \le \frac{(1 + \delta)(1 + 2\delta)^2}{\delta^2} |(x_k^{-1})'(0)|
\end{equation}
and
\begin{equation}\label{variation2}
| x_k^{-1} (z) - E_k^{(j)} | \ge \frac{(1 + \delta)(1 + 2\delta)^2}{(2 + 3\delta)^2} |(x_k^{-1})'(0)|.
\end{equation}
Note that as $z$ runs through the circle of radius $1 + \delta$ around zero, the point $x_k^{-1} (z)$ runs through the entire boundary of $B_k^{(j)}(\delta)$. Thus, since $|(x_k^{-1})'(0)| = |x_k'(E_k^{(j)})|^{-1}$, \eqref{variation} and \eqref{variation2} yield
$$
B \Big( E_k^{(j)}, \frac{(1 + \delta)(1 + 2 \delta)^2}{(2 + 3\delta)^2} |x_k'(E_k^{(j)})|^{-1} \Big) \subseteq B_k^{(j)}(\delta) \subseteq B \Big( E_k^{(j)}, \left(\frac{(1 + \delta)(1 + 2\delta)}{\delta} \right)^2 |x_k'(E_k^{(j)})|^{-1} \Big).
$$
In particular, it follows that
$$
\frac{(1 + \delta)(1 + 2 \delta)^2}{(2 + 3\delta)^2} |x_k'(E_k^{(j)})|^{-1} \le r_k^{(j)}(\delta) \le R_k^{(j)}(\delta) \le \left(\frac{(1 + \delta)(1 + 2\delta)}{\delta} \right)^2 |x_k'(E_k^{(j)})|^{-1}.
$$
Thus,
$$
\left(\frac{\delta}{(1 + \delta)(1 + 2\delta)} \right)^2 |x_k'(E_k^{(j)})| \le \frac{1}{R_k^{(j)}(\delta)} \le \frac{1}{r_k^{(j)}(\delta)} \le \frac{(2 + 3\delta)^2}{(1 + \delta)(1 + 2 \delta)^2} |x_k'(E_k^{(j)})|,
$$
which in turn implies
$$
\left(\frac{\delta}{(1 + \delta)(1 + 2\delta)} \right)^2 \left( \min_j |x_k'(E_k^{(j)})| \right) \le \frac{1}{R_k(\delta)} \le \frac{1}{r_k(\delta)} \le \frac{(2 + 3\delta)^2}{(1 + \delta)(1 + 2 \delta)^2} \left( \min_j |x_k'(E_k^{(j)})| \right).
$$
This shows \eqref{e.rkRkxprimeconnection1}--\eqref{e.rkRkxprimeconnection2}.

\bigskip

(c) The estimates in this part follow immediately from the estimates in parts (a) and (b). This concludes the proof.
\end{proof}

\begin{proof}[Proof of \eqref{e.transportexponentidentity} in Theorem \ref{t.identities}.]
The identity is a direct consequence of Propositions~\ref{p.xkprimelyap} and \ref{p.transportbounds}.
\end{proof}

\section{The Density of States Measure}

In this section we discuss the density of states measure $\nu_\lambda$. Specifically, we establish the identity \eqref{e.doesmeasureidentity} and the large coupling asymptotics \eqref{e.doesmeasureasymptotics}.

The identity \eqref{e.doesmeasureidentity} was established in \cite{DG12} for $\lambda > 0$ sufficiently small. An inspection of the proof given there shows that all that is needed to extend the identity to all $\lambda > 0$ is the transversality statement provided by Theorem~\ref{t.transversal}. Thus, given that Theorem~\ref{t.transversal} has now been established, the identity \eqref{e.doesmeasureidentity} for all $\lambda > 0$ follows as an immediate consequence.

Proving \eqref{e.doesmeasureasymptotics} will require significantly more work. We begin with the following alternative identity for $\dim_H \nu_\lambda$, which we can prove for $\lambda$ sufficiently large. Recall that each connected component of $\sigma_k$ contains precisely one zero of $x_k$, denoted by $E_k^{(i)}$, $1 \le i \le F_k$.

\begin{prop}\label{p.dimnu}
For every $\lambda >0$ we have
\begin{equation}\label{e.dimnuzeros}
\dim_H \nu_\lambda = \frac{\log \varphi}{\lim_{k \to \infty} \frac{1}{kF_k} \log \left( \prod_{i = 1}^{F_k} \left| x_k'(E_k^{(i)}) \right| \right)}.
\end{equation}
\end{prop}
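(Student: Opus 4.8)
The plan is to deduce \eqref{e.dimnuzeros} from the identity \eqref{e.doesmeasureidentity}, which — as explained at the beginning of this section — holds for every $\lambda>0$ once Theorem~\ref{t.transversal} is available. Since \eqref{e.doesmeasureidentity} reads $\dim_H\nu_\lambda = h_\mathrm{top}(T_\lambda)/\mathrm{Lyap}^u\mu_{\lambda,\mathrm{max}} = \log\varphi/\mathrm{Lyap}^u\mu_{\lambda,\mathrm{max}}$, it suffices to prove that the limit
$$
L:=\lim_{k\to\infty}\frac{1}{kF_k}\log\Big(\prod_{i=1}^{F_k}\big|x_k'(E_k^{(i)})\big|\Big)
$$
exists and equals $\mathrm{Lyap}^u\mu_{\lambda,\mathrm{max}}=\int_{\Lambda_\lambda}\log\|DT_\lambda(\cdot)|_{E^u}\|\,d\mu_{\lambda,\mathrm{max}}$.

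First I would record that Propositions~\ref{p.first} and \ref{p.second}, applied \emph{uniformly} over the $F_k$ zeros, produce for each large $k$ a family of periodic points $p_i=p_i(k)\in Per(\Lambda_\lambda)$, $1\le i\le F_k$, of common period $k+M$ ($M$ independent of $k$ and $i$), such that $p_i$ has the same Markov itinerary as the orbit segment $\{T^j\ell_\lambda(E_k^{(i)})\}$ on the time window where the latter lies in the neighborhood $U(\Lambda_\lambda)$ used in the proof of Proposition~\ref{p.second}, and such that
$$
\big|\log|x_k'(E_k^{(i)})|-\log\|DT^{k}(p_i)|_{E^u_{p_i}}\|\big|\le C
$$
with $C$ independent of $k$ and $i$ — the constants appearing in the proofs of Propositions~\ref{p.first} and \ref{p.second} (transversality of $W^u(\Lambda_\lambda)$ to $C_\lambda$ via Proposition~\ref{p.trans}, transversality of $\ell_\lambda$ to $W^s(\Lambda_\lambda)$ via Theorem~\ref{t.transversal}, and the Anosov closing/distortion estimate \cite[Proposition~6.4.16]{KH}) are all uniform over the compact horseshoe. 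Averaging over $i$ and dividing by $k$ then yields
$$
\frac{1}{kF_k}\log\prod_{i=1}^{F_k}\big|x_k'(E_k^{(i)})\big|=\frac{1}{kF_k}\sum_{i=1}^{F_k}\log\|DT^{k}(p_i)|_{E^u_{p_i}}\|+O(1/k).
$$

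Next I would identify the right-hand side as an integral against an atomic invariant measure and pass to the limit via equidistribution. Because $E^u$ is one-dimensional and $DT_\lambda$-invariant, $\log\|DT^{k}(p_i)|_{E^u_{p_i}}\|=\sum_{j=0}^{k-1}\log\|DT_\lambda(T^jp_i)|_{E^u}\|$, so (up to the bounded correction coming from the $M$ extra steps in the period) the sum above equals $\int\log\|DT_\lambda(\cdot)|_{E^u}\|\,d\mu_k$, where $\mu_k:=\frac{1}{F_k}\sum_{i=1}^{F_k}\frac{1}{k+M}\sum_{j=0}^{k+M-1}\delta_{T^jp_i}$ is $T_\lambda$-invariant. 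The decisive structural point is that the itineraries of the $p_i$ are pairwise distinct up to a multiplicity bounded uniformly in $k$: this is forced by the fact, from Lemma~\ref{l.components}, that $x_k$ has \emph{exactly} $F_k$ real zeros, lying in the $F_k$ distinct bands $B_k^{(i)}$, together with the description in its proof of $T^k(\ell_\lambda^b)$ as a curve winding $F_k/2$ times around the central part of $S_\lambda$, so that consecutive bands are separated by the Markov structure and carry distinct codings. Consequently the $p_i$ form a separated family of cardinality $F_k=e^{k\log\varphi+O(1)}=e^{k(h_\mathrm{top}(T_\lambda)+o(1))}$; since $T_\lambda|_{\Lambda_\lambda}$ is a transitive topological Markov chain it has a unique measure of maximal entropy, and any such sequence of uniformly weighted orbit measures of near-maximal cardinality converges weak-$*$ to it, i.e., $\mu_k\to\mu_{\lambda,\mathrm{max}}$ (see, e.g., \cite{B}; alternatively this may be extracted from the specification property already used in the proof of Lemma~\ref{l.clearin}). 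Since $\log\|DT_\lambda(\cdot)|_{E^u}\|$ is continuous on $\Lambda_\lambda$ (indeed H\"older, $E^u$ being $C^1$ on a two-dimensional horseshoe, cf. \cite[Corollary~19.1.11]{KH}), we obtain $\int\log\|DT_\lambda(\cdot)|_{E^u}\|\,d\mu_k\to\mathrm{Lyap}^u\mu_{\lambda,\mathrm{max}}$. Combining this with the previous display shows that $L$ exists and equals $\mathrm{Lyap}^u\mu_{\lambda,\mathrm{max}}$, which is \eqref{e.dimnuzeros}.

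I expect the main obstacle to be the claim that the $F_k$ zeros of $x_k$ give rise to Markov itineraries that are pairwise distinct up to bounded multiplicity: converting the combinatorial statement ``the $F_k$ real zeros occupy $F_k$ distinct bands'' into the dynamical statement about codings requires tracking precisely how $T^k(\ell_\lambda)$ threads through the rectangles of a fixed Markov partition of $\Lambda_\lambda$ during its $F_k/2$ windings, and matching that count against the number of admissible words of length $\approx k$ (which also grows like $\varphi^k$). Everything else is either already contained in Propositions~\ref{p.first}--\ref{p.trans} — once one notes that the constants in their proofs are uniform over the horseshoe — or is a standard application of the thermodynamic formalism for topological Markov chains.
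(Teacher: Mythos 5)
Your approach to Proposition~\ref{p.dimnu} is genuinely different from the paper's, and while it captures the correct mechanism (reduce to showing $L=\mathrm{Lyap}^u\mu_{\lambda,\mathrm{max}}$ and then invoke some equidistribution), it has two real gaps that the paper's route avoids. The paper establishes the limit via a stand-alone dynamical lemma (Lemma~\ref{l.nu}): for curves $\gamma_1$ transversal to $W^s(\Lambda)$ and $\gamma_2$ transversal to $W^u(\Lambda)$, the averaged logarithmic derivative $\frac{1}{kN_k}\log\prod|Df^k(\bar v_i)|$ over the intersection points $f^{-k}(f^k(\gamma_1)\cap\gamma_2)$ converges to $\mathrm{Lyap}^u(\mu_{\max})$. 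It then feeds in $\gamma_1=\ell_\lambda$ and $\gamma_2=C_\lambda$ directly, using Proposition~\ref{p.trans} and the Inclination Lemma for the uniform transversality. Lemma~\ref{l.nu} is proved by first reducing, via distortion and holonomy, to the case where $\gamma_1,\gamma_2$ are pieces of unstable/stable manifolds in Markov rectangles, and then doing an explicit combinatorial computation with the Parry measure (Lemmas~\ref{l.convnu} and \ref{l.enc}): one counts the expected number of occurrences of a fixed subword $[i_0\cdots i_t]$ among all admissible words of length $k+1$ with prescribed endpoints, and this converges to $u_{i_0}v_{i_t}/\lambda^t$ --- exactly the Parry measure of the corresponding cylinder. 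No shadowing by periodic orbits and no large deviations are needed.

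Your route instead passes through periodic approximations $p_i$ and appeals to an equidistribution principle for a positive-proportion family of periodic orbits. Two concrete issues. First, step (b) --- that the $F_k$ zeros give rise to $F_k$ pairwise distinct Markov itineraries, so that the periodic approximations are distinct up to bounded multiplicity --- you flag as the main obstacle, and indeed it is not proven in the proposal. It is morally true because the $F_k$ bands $B_k^{(i)}$ are separated at scale $k$ and the Markov coding refines along with the band structure, but spelling this out requires precisely the kind of tracking of $T^k(\ell_\lambda)$ through the Markov partition that the paper's Lemma~\ref{l.convnu} encapsulates directly. Second, step (c): ``any uniformly weighted family of periodic orbits of near-maximal cardinality equidistributes to $\mu_{\max}$'' is not Bowen's classical result (which averages over \emph{all} periodic orbits of a given period) and is not an immediate consequence of specification. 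Your family has cardinality $F_k\approx\varphi^{-M}\cdot\#\mathrm{Per}_{k+M}$, a constant fraction; to conclude equidistribution for such a subfamily you need a large-deviations bound for the periodic-orbit empirical measures (the number of period-$n$ orbits whose empirical measure is $\varepsilon$-far from $\mu_{\max}$ is $e^{n(h-c(\varepsilon))}$ with $c(\varepsilon)>0$), which is true for topological Markov chains but is a heavier tool than anything cited, and is not something one can extract in one line from \cite{B} or from specification. So the proposal is a plausible alternative program, but as written it is not a complete proof; to close the gaps you would essentially have to redo the combinatorial count that the paper does in Lemma~\ref{l.enc}, at which point the detour through periodic orbits buys nothing.
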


\begin{proof}
Due to \eqref{e.doesmeasureidentity}, we need to show that
$$
\mathrm{Lyap}^u \left( \mu_{\lambda,\mathrm{max}} \right) = \lim_{k \to \infty} \frac{1}{kF_k} \log \left( \prod_{i=1}^{F_k} \left| x_k'(E_k^{(i)}) \right| \right),
$$
which is equivalent to
$$
\mathrm{Lyap}^u \left( \mu_{\lambda,\mathrm{max}} \right) = \lim_{k \to \infty} \frac{1}{k F_{k-1}} \log \left( \prod_{i=1}^{F_{k-1}} \left| x_{k-1}'(E_{k-1}^{(i)}) \right| \right).
$$

Recall that $T^{k}_\lambda(\ell_\lambda(E)) = (x_{k+1}(E), x_k(E), x_{k-1}(E))$, and hence the $z$-component of $T^{k}_\lambda(\ell_\lambda(E))$ is $x_{k-1}(E)$. Let $l_i \in \ell_\lambda$ be the points such that $\ell_\lambda(E_{k-1}^{(i)}) = l_i$. Due to the uniform in $k$ transversality of $T^{k}_\lambda(\ell_\lambda)$  to the plane $\{z=0\}$ (which follows from Proposition \ref{p.trans} combined with the Inclination Lemma), we have $C^{-1} \|DT^{k}_\lambda(\overline{v_i})\| \le \left| x_{k-1}'(E_{k-1}^{(i)}) \right| \le C \|DT^{k}_\lambda(\overline{v_i})\|$ for some uniform $C > 1$, where $\overline{v_i}$ is a unit vector tangent to $\ell_\lambda$ at the point $l_i$. Therefore the statement can be reduced to the claim that
\begin{equation}\label{e.eschelyap}
\mathrm{Lyap}^u \left( \mu_{\lambda,\mathrm{max}} \right) = \lim_{k \to \infty} \frac{1}{kF_{k-1}} \log \left( \prod_{\{ l_i \in \ell_\lambda : T^{k}_\lambda(l_i) \in \{z=0\} \}} \left\| DT^{k}_\lambda(\overline{v_i}) \right\| \right).
\end{equation}
We will need the following statement from hyperbolic dynamics.

\begin{lemma}\label{l.nu}
Let $f : M^2 \to M^2$ be a $C^2$-diffeomorphism such that $f(\Lambda) = \Lambda$ is a topologically mixing locally maximal totally disconnected hyperbolic set, $\bigcap_{n \in \mathbb{Z}}f^n(U(\Lambda))=\Lambda$. Let $\gamma_1, \gamma_2\subset U$ be such that $\gamma_1$ is transversal to $W^s(\Lambda)$, and $\gamma_2$ is transversal to $W^u(\Lambda)$. For each $k\in \mathbb{N}$ denote by $\{l_i\}_{i=1, \ldots, N_k}\subset \gamma_1$ the set $f^{-k}(f^k(\gamma_1)\cap \gamma_2)$. Then,
\begin{equation}\label{e.lyap}
\mathrm{Lyap}^u(\mu_\mathrm{max}) = \lim_{k \to \infty} \frac{1}{kN_k} \log \left( \prod_{\{l_i \in \gamma_1 : f^k(l_i) \in \gamma_2 \}} \left| Df^k(\overline{v_i}) \right| \right),
\end{equation}
where $\mu_\mathrm{max}$ is the measure of maximal entropy for $f|_{\Lambda} : \Lambda \to \Lambda$, and $\overline{v}_i$ is a unit vector tangent to $\gamma_1$ at the point $l_i$.
\end{lemma}

\begin{proof}
First of all, let us notice that if $\gamma_1$ is represented as a disjoint union of curves $\gamma_1'$ and $\gamma_1''$, and (\ref{e.lyap}) holds for both $\gamma_1'$ and $\gamma_1''$, then it also holds for the initial curve $\gamma_1$. Indeed, this just follows from the fact that if $\{a_n\}$, $\{b_n\}$, $\{x_n\}$, and $\{y_n\}$ are sequences of positive numbers such that $\frac{a_n}{b_n}\to c$ and $\frac{x_n}{y_n}\to c$ then $\frac{a_n+x_n}{b_n+y_n}\to c$. The same statement (due to the same argument) holds for the curve $\gamma_2$.

Next, let us notice that if \eqref{e.lyap} holds for some $\gamma_1$, then it also holds for $f(\gamma_1)\cap U$ (and vice versa). Indeed, $N_k(\gamma_1)=N_{k-1}(f(\gamma_1)\cap U)$, and the expression $\log \left( \prod_{\{ l_i \in \gamma_1 : f^k(l_i) \in \gamma_2 \}} \left| Df^k(\overline{v_i}) \right| \right)$ differs from the expression $\log \left( \prod_{\{ l_i \in f(\gamma_1) : f^{k-1}(l_i) \in \gamma_2 \}} \left| Df^k(\overline{v_i}) \right| \right)$ by no more than $\mathrm{const} \cdot N_k(\gamma_1)$. Combining these two observations, we see that it is enough to prove \eqref{e.lyap} for the case when $\gamma_1$ is a curve that is $C^1$-close to a piece of  unstable manifold of $\Lambda$ inside a rectangle of a Markov partition, and $\gamma_2$ is a curve that is $C^1$-close to a piece of stable manifold of $\Lambda$ inside a rectangle of a Markov partition.

Moreover, we can further reduce the statement to the case when $\gamma_1$ is a piece of an unstable manifold in some element of Markov partition, and $\gamma_2$ is a piece of a stable manifold in some element of Markov partition. Indeed, let us consider $C^1$-invariant stable and unstable foliations in $U(\Lambda)$ that include stable and unstable laminations $W^s(\Lambda)$ and $W^u(\Lambda)$ and the curves $\gamma_1$ and $\gamma_2$, respectively. For the existence of these foliations, see \cite{W}. Expansion of the differential of $f$ along the line tangent to a leaf of the unstable foliation is a $C^1$-function. Exponential instability of orbits near a hyperbolic set (see Proposition 6.4.16 from \cite{KH}) now implies that if (\ref{e.lyap}) holds for pieces of stable and unstable manifolds as $\gamma_1$ and $\gamma_2$, then it also holds for the initial curves $\gamma_1$ and $\gamma_2$ that were sufficiently $C^1$-close to the pieces of stable and unstable manifolds.

From now on we can assume that $\gamma_1$ is a piece of an unstable manifold in some element of Markov partition, and $\gamma_2$ is a piece of a stable manifold in some element of Markov partition. The restriction $f|_{\Lambda}$ is conjugate to a topological Markov shift $\sigma_A : \Sigma_A \to \Sigma_A$ with some transitive $0-1$ matrix $A$ of size $N \times N$.

\begin{lemma}\label{l.convnu}
Let $\sigma_A : \Sigma_A \to \Sigma_A$ be a transitive topological Markov chain and denote by $\nu_P$ the measure of maximal entropy {\rm (}Parry measure{\rm )}. Fix any $\omega', \omega'' \in \{1, 2, \ldots, N\}$, and admissible sequences

$\underline{\ldots \text{sequence}_1}\,\, \omega'$ \ -- \ infinite to the left, and

$\omega''\, \underline{\text{sequence}_2\ldots}$  \ -- \ infinite to the right.

We assume that at least one of the two one-sided sequences is not eventually periodic.

For each $k \in \mathbb{N}$, consider the collection $X_k$ of all the sequences from $\Sigma_A$ of the form
$$
\underline{\ldots \text{sequence}_1}\,\, \underbrace{\stackrel{*}{\omega'} \ldots \ldots \omega''}_{k+1}\, \underline{\text{sequence}_2\ldots}
$$
{\rm (}where $*$ indicates the origin{\rm )} and set $S_k = \bigcup_{j=0}^{k-1} \sigma_A^j(X_k)$. Then
$$
\nu_k := \frac{1}{\# S_k} \sum_{x \in S_k} \delta_x \to \nu_P\ \ \ \text{as} \ k\to \infty.
$$
\end{lemma}

Notice that Lemma~\ref{l.convnu} immediately implies \eqref{e.lyap} in the case when $\gamma_1$ and $\gamma_2$ are pieces of stable and unstable manifolds. Indeed, we can assume without loss of generality that $\gamma_1, \gamma_2$ do not contain any periodic points (otherwise we deform them slightly). Let $H : \Sigma_A \to \Lambda$ be the conjugacy between $\sigma_A$ and $f|_{\Lambda}$. Then $H_* (\nu_P) = \mu_\mathrm{max}$, and if $\phi : \Lambda \to \R$ is a continuous function, then
$$
\int \phi \, d\left(H_*(\nu_k)\right) = \frac{1}{\# S_k} \sum_{x \in S_k} \phi(H(x)) \to \int \phi \, d\mu_{max},
$$
and hence for $\phi(x) = \log |Df_x(\bar v_x)|$, where $\bar v_x$ is a unit vector tangent to a leaf of the unstable foliation at the point $x$, we have
\begin{align*}
\int \phi \, d\left(H_*(\nu_k)\right) & = \frac{1}{kN_k} \log \left( \prod_{\{l_i \in \gamma_1 : f^k(l_i) \in \gamma_2\}} \left| Df^k(\overline{v_i}) \right| \right) \\
& \to \int \phi \, d\mu_{max} \\
& = \mathrm{Lyap}^u(\mu_{max})
\end{align*}
as $k\to \infty$, and therefore \eqref{e.lyap} holds.

\begin{proof}[Proof of Lemma \ref{l.convnu}]
First of all, let us recall the construction of the Parry measure $\nu_P$. Due to the Perron-Frobenius Theorem, the matrix $A = (A_{ij})$ has only one eigenvector $\bar{v} = (v_1, \ldots, v_N)$ with positive entries. The eigenvalue $\lambda > 1$ that corresponds to $\bar{v}$ is larger than the absolute value of any other eigenvalue of $A$. Denote by $\bar u=(u_1, \ldots, u_N)$ the eigenvector of the transposed matrix $A^T$ that corresponds to the eigenvalue $\lambda$. Without loss of generality we can normalize $\bar v$ and $\bar u$ in such a way that $v_1 u_1 + v_2 u_2 + \ldots +v_N u_N = 1$. The Parry measure is the Markov measure with the stationary probability vector $\bar p = (p_1, \ldots, p_N)$, $p_i = v_iu_i$, and the transition matrix $(p_{ij})$, $p_{ij} = \frac{A_{ij} v_j}{\lambda v_j}$. An equivalent way to introduce the Parry measure is to define it on a cylinder $C = \{\omega \in \Sigma_A : \omega_0 = i_0, \ldots, \omega_n = i_n\}$ by
$$
\nu_P(C)=\left\{
           \begin{array}{cl}
             0 & \hbox{if $i_0\ldots i_n$ is not an admissible sequence;} \\
             \frac{u_{i_0}v_{i_n}}{\lambda^n} & \hbox{if $i_0\ldots i_n$ is admissible.}
           \end{array}
         \right.
$$
We need to show that for any continuous function $\phi : \Sigma_A \to \mathbb{R}$, we have
\begin{equation}\label{e.conv}
\int \phi \, d\nu_k = \frac{1}{\# S_k} \sum_{x \in S_k} \phi(x) \to \int \phi \, d \nu_P \ \ \text{as} \ k\to \infty.
\end{equation}
It is enough to establish this convergence for functions of the form $\phi_C = \chi_C$, where $C = \{ \omega \in \Sigma_A : \omega_r = i_r, \omega_{r+1} = i_{r+1}, \ldots, \omega_s = {i_s}\}$ for some $r < s$ and $i_j \in \{1, 2, \ldots, N\}$ since the linear combinations of these functions are dense in $C(\Sigma_A)$.

\begin{lemma}\label{l.enc}
Consider a topological Markov chain $\sigma_A : \Sigma_A \to \Sigma_A$ and fix some finite admissible sequence $[i_0, i_1, \ldots, i_t]$, $i_j \in \{1, \ldots, N\}$, and $\omega', \omega'' \in \{1, \ldots, N\}$. For a given $k \in \mathbb{N}$, consider the collection of all admissible sequences $\omega_0, \ldots, \omega_k$ of length $k+1$ such that $\omega_0 = \omega'$ and $\omega_k = \omega''$, and denote by $I_{[i_0, i_1, \ldots, i_t]}(\omega', \omega'')$ the number of times the string $[i_0, i_1, \ldots, i_t]$ can be encountered in these sequences {\rm (}counting different encounters in the same sequence as separate{\rm )}. If we denote $A^k = (A^{(k)}_{ij})$, then
$$
\frac{I_{[i_0, i_1, \ldots, i_t]}(\omega', \omega'')}{kA_{\omega'\omega''}^{(k)}} \to \frac{u_{i_0}v_{i_t}}{\lambda^t} \ \  \ \text{as} \ k \to \infty.
$$
\end{lemma}

\begin{proof}
Let us take $k \gg t$ and represent
$$
I_{[i_0, i_1, \ldots, i_t]}(\omega', \omega'') = I_{[i_0, i_1, \ldots, i_t]}^{\mathrm{bound}}(\omega', \omega'')+I_{[i_0, i_1, \ldots, i_t]}^{\mathrm{int}}(\omega', \omega''),
$$
where $I_{[i_0, i_1, \ldots, i_t]}^{\mathrm{bound}}(\omega', \omega'')$ is the number of encounters of $[i_0, i_1, \ldots, i_t]$ starting in the beginning or in the tail part of length $[\ln k]$ of the sequences $\omega_0, \ldots, \omega_k$, and $I_{[i_0, i_1, \ldots, i_t]}^{\mathrm{int}}(\omega', \omega'')$ is the number of encounters of $[i_0, i_1, \ldots, i_t]$ starting in the middle part (of length $k-2[\ln k]$) of these sequences. A rough estimate on $I_{[i_0, i_1, \ldots, i_t]}^{\mathrm{bound}}(\omega', \omega'')$ gives
$$
I_{[i_0, i_1, \ldots, i_t]}^{\mathrm{bound}}(\omega', \omega'') \le C k^{\ln N} \ln k,
$$
and hence in
$$
\frac{I_{[i_0, i_1, \ldots, i_t]}(\omega', \omega'')}{kA_{\omega'\omega''}^{(k)}} = \frac{I_{[i_0, i_1, \ldots, i_t]}^{\mathrm{bound}}(\omega', \omega'')}{kA_{\omega'\omega''}^{(k)}} + \frac{I_{[i_0, i_1, \ldots, i_t]}^{\mathrm{int}}(\omega', \omega'')}{kA_{\omega'\omega''}^{(k)}},
$$
we have $\frac{I_{[i_0, i_1, \ldots, i_t]}^{\mathrm{bound}}(\omega', \omega'')}{kA_{\omega'\omega''}^{(k)}}\to 0$ as $k\to \infty$.

For a given $l$ between $[\ln k]$ and $k-[\ln k]$, denote by $I^l$ the number of admissible sequences $\omega_0, \ldots, \omega_k$ such that $[\omega_l\omega_{l+1}\ldots \omega_{l+t}]=[i_0i_1\ldots i_t]$. We have
\begin{align*}
\frac{I^l}{A^{(k)}_{\omega'\omega''}} & = \sum_{\stackrel{\omega_0 = \omega', \omega_k = \omega'',}{[\omega_l\omega_{l+1}\ldots \omega_{l+t}]=[i_0i_1\ldots i_t]}} \frac{A_{\omega_0\omega_1} A_{\omega_1\omega_2} \ldots A_{\omega_{k-1}\omega_k}}{A^{(k)}_{\omega'\omega''}} \\
& = (A_{i_0i_1}A_{i_1i_2}\ldots A_{i_{t-1}i_t})\frac{A^{(l)}_{\omega'i_0}A^{(k-l-t)}_{i_t\omega''}}{A^{(k)}_{\omega'\omega''}}.
\end{align*}
Notice that $A_{i_0 i_1} A_{i_1 i_2} \ldots A_{i_{t-1} i_t} = 1$ since $i_0 i_1 \ldots i_t$ is an admissible sequence. We also know that $\lim_{k \to \infty} A^{(k)}_{i j} \lambda^{-k} = u_j v_i$ (see, e.g., \cite[Theorem 0.17]{W}). Since there are only finitely many pairs $(i j)$, the limit here is uniform in $i,j$, and therefore we have
$$
\frac{I^l}{A^{(k)}_{\omega' \omega''}} = \frac{\left( A^{(l)}_{\omega'i_0} \lambda^{-l} \right) \left( A^{(k-l-t)}_{i_t \omega''} \lambda^{-(k-l-t)} \right)}{A^{(k)}_{\omega' \omega''} \lambda^{-k} \lambda^t} \approx \frac{u_{i_0} v_{\omega'} \cdot u_{\omega''} v_{i_t}}{u_{\omega''} v_{\omega'} \lambda^t} = \frac{u_{i_0} v_{i_t}}{\lambda^t},
$$
uniformly for large $k$, and hence
$$
\frac{I_{[i_0, i_1, \ldots, i_t]}^{\mathrm{int}}(\omega', \omega'')}{k A_{\omega'\omega''}^{(k)}} = \frac{1}{k} \sum_{l = [\ln k]}^{k - \ln[k]} \frac{I^l}{A_{\omega' \omega''}^{(k)}} \to \frac{u_{i_0} v_{i_t}}{\lambda^t} \ \text{as}\ k \to \infty.
$$
This proves Lemma~\ref{l.enc}.
\end{proof}

Notice that Lemma~\ref{l.enc} implies \eqref{e.conv} for the function $\phi_C$. Indeed, if $\ln k\gg \max(|s|, |r|)$, then
$$
I_{[i_r, i_1, \ldots, i_s]}^{\mathrm{int}}(\omega', \omega'') \le \sum_{x \in S_k} \phi_C(x) \le I_{[i_r, i_1, \ldots, i_s]}^{\mathrm{int}}(\omega', \omega'') + C k^{\ln N} \ln k,
$$
and \eqref{e.conv} follows since $\# S_k = k A_{\omega'\omega''}^{(k)}$ by the assumption that at least one of the one-sided sequences ($\underline{\ldots \text{sequence}_1}\,\, \omega'$, $\omega''\, \underline{\text{sequence}_2\ldots}$) is not eventually periodic. This proves Lemma~\ref{l.convnu}.
\end{proof}

This concludes the proof of Lemma~\ref{l.nu}.
\end{proof}

Now \eqref{e.eschelyap} follows directly from Lemma~\ref{l.nu}, and this proves Proposition~\ref{p.dimnu}.
\end{proof}

For $\lambda$ sufficiently large, the modulus of $x_k'(E_k^{(i)})$ may be estimated with the help of \cite[Lemmas~5\,{\&}\,6]{DEGT}. Namely, if $m$ denotes the number of spectra $\sigma_j$, $1 \le j \le k-1$, $E_k^{(i)}$ belongs to, then
\begin{equation}\label{e.xkeiestimate}
S_l(\lambda)^m \le |x_k'(E_k^{(i)})| \le S_u(\lambda)^m,
\end{equation}
where
\begin{equation}\label{e.slulambda}
S_l(\lambda) = \frac{1}{2} \left((\lambda - 4) + \sqrt{(\lambda - 4)^2 - 12} \right) \quad \text{and} \quad S_u(\lambda) = 2\lambda + 22.
\end{equation}
Here, the first inequality in \eqref{e.xkeiestimate} requires $\lambda \ge 8$ and the second requires $\lambda > 4$.

Through the end of this section let us assume that $\lambda > 4$. In this case the Fricke-Vogt invariant implies that
\begin{equation}\label{e.critical}
\sigma_k \cap \sigma_{k+1} \cap \sigma_{k+2} = \emptyset.
\end{equation}
The identity \eqref{e.critical} is the basis for work done by Raymond \cite{Ra}. Following \cite{kkl}, we call a band $I_k \subset \sigma_k$ a ``type~A band'' if $I_k \subset \sigma_{k-1}$ (and hence $I_k \cap (\sigma_{k+1} \cup \sigma_{k-2}) = \emptyset$). We call a band $I_k \subset \sigma_k$ a ``type~B band'' if $I_k \subset \sigma_{k-2}$ (and therefore $I_k \cap \sigma_{k-1} = \emptyset$). Then we have the following result (Lemma~5.3 of \cite{kkl}, essentially Lemma~6.1 of \cite{Ra}).

\begin{lemma}\label{l.order}
For every $\lambda > 4$ and every $k \ge 1$, \\[1mm]
{\rm (a)} Every type A band $I_k \subset \sigma_k$ contains exactly one type B band $I_{k+2} \subset \sigma_{k+2}$, and no other bands from $\sigma_{k+1}$, $\sigma_{k+2}$. \\[1mm]
{\rm (b)} Every type B band $I_k \subset \sigma_k$ contains exactly one type A band $I_{k+1} \subset \sigma_{k+1}$ and two type B bands from $\sigma_{k+2}$, positioned around $I_{k+1}$.
\end{lemma}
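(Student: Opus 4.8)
The plan is to prove this as Raymond did \cite{Ra} (see also \cite[Section 5]{kkl}): by induction on $k$, with the driving mechanism being the Fricke--Vogt invariant
$$
x_{k+1}(E)^2 + x_k(E)^2 + x_{k-1}(E)^2 - 2\,x_{k+1}(E)\,x_k(E)\,x_{k-1}(E) - 1 = \tfrac{\lambda^2}{4},
$$
which holds for every $k$ since $\ell_\lambda \subset S_\lambda$, together with the recursion $x_{k+1} = 2 x_k x_{k-1} - x_{k-2}$. First I would extract the two quantitative facts that make $\lambda > 4$ the relevant threshold. Solving the invariant as a quadratic in its outer coordinate $x_{k+1}$ (which satisfies the same relation as $x_{k-2}$) gives
$$
x_{k+1}(E),\ x_{k-2}(E) \in \Big\{\, x_k(E) x_{k-1}(E) \pm \sqrt{(1-x_k(E)^2)(1-x_{k-1}(E)^2) + \tfrac{\lambda^2}{4}}\,\Big\},
$$
so \emph{whenever $|x_k(E)| \le 1$ and $|x_{k-1}(E)| \le 1$ we have $|x_{k+1}(E)|, |x_{k-2}(E)| \ge \lambda/2 - 1 > 1$}; solving instead for the middle coordinate shows \emph{whenever $|x_k(E)| \le 1$ and $|x_{k-2}(E)| \le 1$ we have $|x_{k-1}(E)| \ge \lambda/2 - 1 > 1$}. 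In particular every band of $\sigma_k$ is type~A or type~B, and by \eqref{e.critical} exactly one of the two.

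The induction hypothesis I would carry at level $k$ is the full conclusion of the lemma at all earlier levels \emph{plus}, for each band $I_k$ of $\sigma_k$: (i) $x_k|_{I_k}$ is a monotone homeomorphism onto $[-1,1]$ with $x_k' \ne 0$ on $\overline{I_k}$; (ii) if $I_k$ is type~A then $x_{k-1}$ and $x_{k+1}$ are each of constant sign on $I_k$ and $|x_{k+1}| > \lambda/2 - 1$ there, while if $I_k$ is type~B then $x_{k-1}$ is of constant sign with $|x_{k-1}| > \lambda/2 - 1$ on $I_k$; (iii) uniform (in $k$) two-sided bounds on $|x_k'|$ on the part of $I_k$ where $|x_k| \le 1 - \eta$, and on the ratios $|x_{k-1}'/x_k'|$ and $|x_{k-2}'/x_k'|$ over $I_k$. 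The base cases (small $k$) are checked directly from $x_0 = E/2$, $x_1 = (E-\lambda)/2$, $x_2 = \tfrac12 E(E-\lambda) - 1$, $x_3 = 2x_2 x_1 - x_0$.

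For the inductive step, fix a band $I_k$ of $\sigma_k$. If $I_k$ is type~A, then $|x_k|, |x_{k-1}| \le 1$ on $I_k$, so the first quantitative fact gives $|x_{k+1}| > \lambda/2 - 1 > 1$ throughout $I_k$: thus $\sigma_{k+1} \cap I_k = \emptyset$, $x_{k+1}$ has constant sign on $I_k$, and in $x_{k+2} = 2 x_{k+1} x_k - x_{k-1}$ the factor $x_{k+1}$ is large and one-signed while $x_k$ runs monotonically over $[-1,1]$ and $|x_{k-1}| \le 1$; the derivative control (iii) then forces $x_{k+2}$ to sweep monotonically across an interval of length $> 2\lambda - 6$, crossing $[-1,1]$ exactly once, so $I_k$ contains exactly one band of $\sigma_{k+2}$ and no band of $\sigma_{k+1}$; this band lies in $\sigma_k$ and, since $\sigma_{k+1}\cap I_k = \emptyset$, is not contained in any band of $\sigma_{k+1}$, hence is type~B. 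If $I_k$ is type~B, then $|x_{k-2}| \le 1$ on $I_k$ while $x_{k-1}$ is one-signed with $|x_{k-1}| > \lambda/2 - 1$; evaluating $x_{k+1} = 2 x_k x_{k-1} - x_{k-2}$ at the two endpoints of $I_k$, where $x_k$ takes the values $+1$ and $-1$, gives $x_{k+1}$ values of opposite sign and of modulus $> \lambda - 3 > 1$, while monotonicity of $x_{k+1}|_{I_k}$ (again from (iii), the dominant term in $x_{k+1}'$ being $2 x_k' x_{k-1}$) yields exactly one crossing of $[-1,1]$, i.e.\ one band $I_{k+1}$ of $\sigma_{k+1}$, located near the center of $I_k$ (where $x_k \approx 0$ and $x_{k+1}\approx -x_{k-2} \in [-1,1]$); this $I_{k+1}$ lies in $\sigma_k$ and misses $\sigma_{k-1}$, so it is type~A. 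Finally $x_{k+2} = 2 x_{k+1} x_k - x_{k-1}$ has $|x_{k+2}| > 1$ near the center of $I_k$ (there $x_{k+2} \approx -x_{k-1}$, of modulus $> \lambda/2 - 1$) and near each endpoint of $I_k$ (both $x_{k+1}$ and $x_k = \pm1$ being of modulus $\gtrsim 1$ there), and the monotone-sweep argument on each of the two subintervals of $I_k$ between the center and an endpoint produces exactly one band of $\sigma_{k+2}$; each of these lies in $\sigma_k$ and is disjoint from $I_{k+1}$ (since $|x_{k+2}| > 1$ on $I_{k+1}$), hence is type~B, and they sit one on each side of $I_{k+1}$. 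Re-establishing (i)--(iii) for the new bands is part of the same covering argument.

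The main obstacle is precisely item (iii): turning each ``$x_{k+1}$ (or $x_{k+2}$) sweeps an interval covering $[-1,1]$'' into ``crosses $[-1,1]$ exactly once''. Qualitatively the graphs look right, but one must propagate through the induction genuine bounds showing that the subdominant terms in $x_{k+1}'$ and $x_{k+2}'$ on a band $I_k$ cannot produce spurious interior critical points — equivalently, a bounded-distortion statement for $x_k|_{I_k}$ together with control of how $x_{k-1}$ and $x_{k-2}$ vary across $I_k$ relative to $x_k$. This is the technical core of \cite[Section 6]{Ra} and \cite[Section 5]{kkl}, and for $\lambda > 4$ it is kept alive only by the gap $\lambda/2 - 1 > 1$ furnished by the Fricke--Vogt invariant; the estimates tighten as $\lambda \downarrow 4$, which is why one must use the invariant rather than crude triangle-inequality bounds on the recursion (and why weaker forms of the lemma in the literature assume $\lambda \ge 8$).
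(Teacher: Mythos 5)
The paper itself offers no proof of this lemma: it simply records the statement with a pointer to Lemma~5.3 of \cite{kkl} and Lemma~6.1 of \cite{Ra}, so there is no in-paper argument to compare against. Your sketch is a faithful reconstruction of the Raymond/Killip--Kiselev--Last approach, and the two quantitative consequences you extract from the Fricke--Vogt invariant (the outer-coordinate bound $|x_{k\pm 1}|\ge \lambda/2-1$ when the two neighbours are in $[-1,1]$, and the middle-coordinate bound) are correct and are exactly what makes $\lambda>4$ the right threshold. The intermediate-value portions of the inductive step are also sound.

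The gap you flag yourself -- upgrading ``$x_{k+1}$ (resp.\ $x_{k+2}$) sweeps across $[-1,1]$'' to ``exactly one band'' -- is real, but you are working harder than necessary to close it. Your item~(iii) (propagating two-sided bounds on $|x_k'|$ and on the ratios $|x_{k-1}'/x_k'|$, $|x_{k-2}'/x_k'|$) is a distortion estimate that is genuinely delicate as $\lambda\downarrow 4$ and is not needed here, because the paper already supplies the missing global information: the Floquet-theory argument used in Lemma~\ref{l.components} (and again in Proposition~\ref{p.trans}) shows that $\sigma_m$ has \emph{exactly} $F_m$ bands, on each of which $x_m$ is a monotone homeomorphism onto $[-1,1]$. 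With this, the ``exactly one'' claims follow from counting rather than from monotonicity of the composite sweep. Concretely, carry in the induction hypothesis the band counts $a_j=F_{j-2}$, $b_j=F_{j-1}$ for $j\le k$ together with the containment $\sigma_{j}\subset\sigma_{j-1}\cup\sigma_{j-2}$. Your IVT arguments give: each type~B band of $\sigma_k$ contains at least one (necessarily type~A) band of $\sigma_{k+1}$; each type~A band of $\sigma_{k-1}$ contains at least one (type~B) band of $\sigma_{k+1}$; each type~B band of $\sigma_{k-1}$ contains at least two (type~B) bands of $\sigma_{k+1}$. Summing these lower bounds gives at least $b_k + a_{k-1} + 2b_{k-1} = F_{k-1}+F_{k-3}+2F_{k-2} = F_{k+1}$ bands of $\sigma_{k+1}$ accounted for inside $\sigma_k\cup\sigma_{k-1}$. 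Since the total is exactly $F_{k+1}$, every ``at least'' is an equality and no band of $\sigma_{k+1}$ lies outside $\sigma_k\cup\sigma_{k-1}$, which simultaneously re-establishes the containment hypothesis. The same bookkeeping one level up pins down the $\sigma_{k+2}$ counts in parts (a) and (b). This avoids the bounded-distortion machinery entirely and uses only what you already proved plus the band count from Lemma~\ref{l.components}.

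One smaller point: your parenthetical ``In particular every band of $\sigma_k$ is type~A or type~B'' is stated as if it followed directly from the two quantitative facts, but it does not -- it is equivalent to $\sigma_k\subset\sigma_{k-1}\cup\sigma_{k-2}$, which is part of what must be carried through the induction (and, as above, falls out of the counting).
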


We denote by $a_k$ the number of bands of type A in $\sigma_k$ and by $b_k$ the number of bands of type B in $\sigma_k$. By Raymond's work, it follows immediately that $a_k + b_k = F_k$ for every $k$. In fact, we have the following result, which follows from Lemma~\ref{l.order} by an easy induction.

\begin{lemma}
The constants $\{a_k\}$ and $\{b_k\}$ obey the relations
\begin{equation}\label{abkrecursion}
a_k = b_{k-1} , \quad b_k = a_{k-2} + 2b_{k-2}
\end{equation}
with initial values $a_0 = 1$, $a_1 = 0$, $b_0 = 0$, and $b_1 = 1$. Consequently, for $k \ge 2$,
\begin{equation}\label{abkvalues}
a_k = b_{k-1} = F_{k-2}.
\end{equation}
\end{lemma}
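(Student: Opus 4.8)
The plan is to extract the recursions \eqref{abkrecursion} directly from the containment bookkeeping in Lemma~\ref{l.order}, to pin down the initial values by an explicit look at the first few approximants, and then to solve the resulting linear recursion; the closed form \eqref{abkvalues} will drop out at the end. I would begin by setting up the count. For $\lambda>4$, by \eqref{e.critical} (in the shifted form $\sigma_k\cap\sigma_{k-1}\cap\sigma_{k-2}=\emptyset$) together with Raymond's band structure \cite{Ra,kkl}, every band of $\sigma_k$ is of type A or of type B, and these two possibilities are mutually exclusive; a type A band lies in a necessarily unique band of $\sigma_{k-1}$ and a type B band lies in a unique band of $\sigma_{k-2}$, the uniqueness because distinct bands of a fixed $\sigma_j$ are pairwise disjoint. (In particular $a_k+b_k=F_k$, as already noted before the statement.) For the initial values, $x_{-1}\equiv1$, $x_{-2}(E)=(E+\lambda)/2$, $x_0(E)=E/2$, $x_1(E)=(E-\lambda)/2$, so $\sigma_{-1}=\R$, $\sigma_{-2}=[-\lambda-2,-\lambda+2]$, $\sigma_0=[-2,2]$, $\sigma_1=[\lambda-2,\lambda+2]$; for $\lambda>4$ these are positioned so that the single band of $\sigma_0$ is contained in $\sigma_{-1}$ but not in $\sigma_{-2}$ (type A, so $a_0=1$, $b_0=0$) and the single band of $\sigma_1$ is contained in $\sigma_{-1}$ but not in $\sigma_0$ (type B, so $a_1=0$, $b_1=1$).

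Next I would derive the two recursions by partitioning the bands of a later approximant according to their parent band in $\sigma_k$. For $a_{k+1}=b_k$: each type A band of $\sigma_{k+1}$ sits in a unique band of $\sigma_k$; by Lemma~\ref{l.order}(a) a type A band of $\sigma_k$ contains no band of $\sigma_{k+1}$, and by Lemma~\ref{l.order}(b) a type B band of $\sigma_k$ contains exactly one type A band of $\sigma_{k+1}$, so summing over the $a_k$ type A and $b_k$ type B bands of $\sigma_k$ gives $a_{k+1}=b_k$. For $b_{k+2}=a_k+2b_k$: each type B band of $\sigma_{k+2}$ sits in a unique band of $\sigma_k$; by Lemma~\ref{l.order}(a) a type A band of $\sigma_k$ contains exactly one such band and by Lemma~\ref{l.order}(b) a type B band of $\sigma_k$ contains exactly two, so summing gives $b_{k+2}=a_k+2b_k$. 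After reindexing these are exactly \eqref{abkrecursion}, and one checks that the low-index instances ($k=1,2$) are consistent with the $a_0,a_1,b_0,b_1$ just computed.

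Finally, to obtain \eqref{abkvalues}, I would solve the recursion. From $b_1=1=F_0$, $b_2=a_0+2b_0=1=F_1$, $b_3=a_1+2b_1=2=F_2$, and $b_k=a_{k-2}+2b_{k-2}=b_{k-3}+2b_{k-2}$ for $k\ge 3$ (using $a_{k-2}=b_{k-3}$), an induction together with the Fibonacci recursion $F_{k-2}=F_{k-3}+F_{k-4}$ gives $b_k=F_{k-4}+2F_{k-3}=F_{k-2}+F_{k-3}=F_{k-1}$ for all $k\ge1$; therefore $a_k=b_{k-1}=F_{k-2}$ for $k\ge2$.

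There is no deep obstacle here. The only places requiring care are confirming that the type A / type B classification is genuinely exhaustive and mutually exclusive, so that the band-counting is both complete and free of double-counting — this is precisely where \eqref{e.critical} and Raymond's structure theory enter — and keeping the index shifts straight so that the two recursions dovetail correctly with the explicitly computed base cases.
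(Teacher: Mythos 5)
Your proof is correct and takes the same route the paper intends: the paper simply remarks that the lemma ``follows from Lemma~\ref{l.order} by an easy induction,'' and you have spelled out exactly that induction — extracting the two recursions from the band-containment counts in Lemma~\ref{l.order}, pinning down the base cases by computing $x_{-2},x_{-1},x_0,x_1$ explicitly, and solving the resulting linear recursion. The index bookkeeping ($b_k=F_{k-1}$ for $k\ge 1$, hence $a_k=b_{k-1}=F_{k-2}$ for $k\ge 2$) is consistent with the paper's convention $F_0=F_1=1$.
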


Let us also denote by $a_{k,m}$ the number of bands $b$ of type A in $\sigma_k$ with $\# \{ 0\le j < k : b \cap \sigma_j \not= \emptyset \} = m$ and by $b_{k,m}$ the number of bands $b$ of type B in $\sigma_k$ with $\# \{ 0 \le j < k : b \cap \sigma_j \not= \emptyset \} = m$. Then, \cite[Lemma~4]{DEGT} reads as follows:

\begin{lemma}
We have
\begin{equation}\label{e.abkmrecursion}
a_{k,m} = b_{k-1,m-1} , \quad b_{k,m} = a_{k-2,m-1} + 2b_{k-2,m-1}
\end{equation}
with initial values $a_{0,m} = 0$ for $m > 0$, $a_{0,0} = 1$, $a_{1,m} = 0$ for $m \ge 0$, $b_{0,m} = 0$ for $m \ge 0$, $b_{1,m}
= 0$ for $m > 0$, and $b_{1,0} = 1$. Consequently,
\begin{equation}\label{e.abkmzero}
a_{k,m} = b_{k-1,m-1} = \begin{cases} 2^{2k - 3m - 1} \frac{m}{k-m} { k - m \choose 2m - k } & \text{ when } \lceil \tfrac{k}{2} \rceil
\le m \le \lfloor \tfrac{2k}{3} \rfloor; \\ 0 & \text{ otherwise.}
\end{cases}
\end{equation}
\end{lemma}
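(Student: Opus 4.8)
The plan is to obtain the recursion \eqref{e.abkmrecursion} together with its initial values from the band combinatorics already encoded in Lemma~\ref{l.order}, and then to read off the closed form \eqref{e.abkmzero} from \eqref{e.abkmrecursion} by a short generating function computation.

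For the recursion I would read Lemma~\ref{l.order} ``downward.'' Fix $k\ge 2$. Every band $I_k$ of $\sigma_k$ is of type A or type B (as recalled above, $a_k+b_k=F_k$), and correspondingly lies in a unique band of $\sigma_{k-1}$ or of $\sigma_{k-2}$ --- using \eqref{e.critical} to see that the two possibilities are mutually exclusive --- which I call the parent of $I_k$. A parent of a type A band of $\sigma_k$ must itself be of type B, since by Lemma~\ref{l.order}(a) a type A band of $\sigma_{k-1}$ contains no band of $\sigma_k$; hence the parent map is a bijection between type A bands of $\sigma_k$ and type B bands of $\sigma_{k-1}$. For type B bands of $\sigma_k$, Lemma~\ref{l.order}(a)--(b) says that the parent map onto bands of $\sigma_{k-2}$ has exactly one preimage over each type A band and exactly two preimages over each type B band. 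It then remains to show that the statistic $m(I_k):=\#\{\,0\le j<k: I_k\cap\sigma_j\ne\emptyset\,\}$ satisfies $m(I_k)=m(\mathrm{parent}(I_k))+1$. The extra index is $j=k-1$ when $I_k$ has type A (then $I_k\subseteq\sigma_{k-1}$) and $j=k-2$ when $I_k$ has type B (then $I_k\subseteq\sigma_{k-2}$ but $I_k\cap\sigma_{k-1}=\emptyset$); that no other index is gained or lost rests on the nesting property of the bands, namely that if a band of $\sigma_k$ meets a band of $\sigma_j$ with $j<k$ then it is contained in it. This nesting follows by induction on $k-j$ from the covering structure (a band lies in its parent band, the parent band lies in a band of $\sigma_j$ by the inductive hypothesis, hence so does the original band), and with it one gets $I_k\cap\sigma_j\ne\emptyset\iff\mathrm{parent}(I_k)\cap\sigma_j\ne\emptyset$ for all $j$ below the parent's level. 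Combining the bijections with the shift $m\mapsto m+1$ yields $a_{k,m}=b_{k-1,m-1}$ and $b_{k,m}=a_{k-2,m-1}+2b_{k-2,m-1}$. The initial data are immediate: $\sigma_0$ and $\sigma_1$ each consist of a single band, and since $\lambda>4$ forces $\sigma_0\cap\sigma_1=\emptyset$ these two bands have $m=0$; classifying them as ``type A'' and ``type B'' respectively gives $a_{0,0}=1$, $b_{1,0}=1$, with all remaining low-order counts zero.

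To extract \eqref{e.abkmzero}, substitute $a_{k-2,m-1}=b_{k-3,m-2}$ into \eqref{e.abkmrecursion} to obtain the homogeneous two-variable recursion $b_{k,m}=2b_{k-2,m-1}+b_{k-3,m-2}$ for $k\ge 3$, with $b_{1,0}=1$, $b_{2,1}=1$ (the latter coming from $a_{0,0}=1$) and all other low-order terms zero. Setting $B(x,y)=\sum_{k\ge 0,\,m\ge 0}b_{k,m}\,x^ky^m$, the recursion becomes $(1-2x^2y-x^3y^2)\,B(x,y)=x+x^2y$. Expanding $(1-2x^2y-x^3y^2)^{-1}=\sum_{n\ge 0}(2x^2y+x^3y^2)^n$ by the binomial theorem, the coefficient of $x^ky^m$ forces $n=k-m$ with an internal index $2m-k$, producing $2^{2k-3m}\binom{k-m}{2m-k}$; multiplying by $x+x^2y$ gives a two-term expression for $b_{k,m}$, and one application of Pascal's rule collapses it to the claimed formula for $b_{k,m}=a_{k+1,m+1}$. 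The constraint $\lceil k/2\rceil\le m\le\lfloor 2k/3\rfloor$ is precisely the range $0\le 2m-k\le k-m$ in which the relevant binomial coefficient is a nonzero honest entry, and outside it all terms vanish. Re-indexing by $a_{k,m}=b_{k-1,m-1}$ yields \eqref{e.abkmzero}, and the finitely many small values of $k$ not cleanly covered by the generating-function bookkeeping are checked directly against \eqref{e.abkmrecursion}.

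The main obstacle, and essentially the only nonroutine point, is the identity $m(I_k)=m(\mathrm{parent}(I_k))+1$: it amounts to saying that the set of indices $j<k$ with $I_k\cap\sigma_j\ne\emptyset$ is faithfully recorded by the chain of parents in the band tree, which reduces to the nesting property of the bands. That property is part of Raymond's combinatorial description of $\sigma_k$ underlying Lemma~\ref{l.order}, but it has to be isolated and invoked explicitly here; once it is in place, both the recursion and --- through the generating-function manipulation and a single Pascal identity --- the closed form follow.
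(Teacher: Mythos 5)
Your argument is correct and, in fact, goes substantially beyond what the paper offers: the paper simply lifts the statement from \cite[Lemma~4]{DEGT} and dismisses the recursion \eqref{e.abkmrecursion} as ``straightforward'' to verify from Lemma~\ref{l.order}, without proving the closed form \eqref{e.abkmzero} at all. You supply both halves. The parent-map decomposition reads Lemma~\ref{l.order} backwards exactly as intended --- bijection from type~A bands of $\sigma_k$ onto type~B bands of $\sigma_{k-1}$, and a map from type~B bands of $\sigma_k$ onto bands of $\sigma_{k-2}$ that is one-to-one over type~A parents and two-to-one over type~B parents --- and you correctly isolate the one non-obvious ingredient, namely $m(I_k)=m(\mathrm{parent}(I_k))+1$. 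Your reduction of this to the nesting property, proved by induction on $k-j$ along parent chains, is sound; the only degenerate case in the induction ($\mathrm{parent}(I_k)$ at level $k-2$ with $j=k-1$) is ruled out because a type~B band is disjoint from $\sigma_{k-1}$. The initial-type conventions match the paper's $a_0=1$, $b_1=1$. For the closed form, the substitution $a_{k-2,m-1}=b_{k-3,m-2}$ gives the homogeneous recursion, the generating function identity $(1-2x^2y-x^3y^2)B(x,y)=x+x^2y$ is correct, and the coefficient extraction produces $b_{k,m}=2^{2k-3m-2}\binom{k-m-1}{2m-k+1}+2^{2k-3m-1}\binom{k-m-1}{2m-k}$; writing $p=k-m$, $q=2m-k$, Pascal's rule plus the absorption identity $\binom{p-1}{q}=\tfrac{q+1}{p}\binom{p}{q+1}$ collapses this to $2^{2k-3m-2}\tfrac{p+q+1}{p}\binom{p}{q+1}=2^{2k-3m-2}\tfrac{m+1}{k-m}\binom{k-m}{2m-k+1}=a_{k+1,m+1}$, with the nonvanishing range $\lceil(k-1)/2\rceil\le m\le\lfloor(2k-1)/3\rfloor$ translating exactly to the stated $\lceil(k+1)/2\rceil\le m+1\le\lfloor 2(k+1)/3\rfloor$. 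One small cosmetic remark: the collapse uses Pascal plus absorption rather than Pascal alone, but this does not affect correctness.
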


In fact, for our purposes here the recursion \eqref{e.abkmrecursion} will be sufficient, and we won't make use of the explicit solution \eqref{e.abkmzero}. Verifying the recursion \eqref{e.abkmrecursion} using the definition and Lemma~\ref{l.order} is straightforward.

Set
$$
A_k = \sum_m m a_{k,m}, \quad B_k = \sum_m m b_{k,m}, \quad \text{and} \quad C_k = A_k + B_k.
$$

\begin{lemma}
We have
\begin{align}
A_k & = B_{k-1} + F_{k-2}, \label{e.Arec} \\
B_k & = A_{k-2} + 2 B_{k-2} + F_{k-1}, \label{e.Brec} \\
C_k & = C_{k-1} + C_{k-2} + 2F_{k-2}. \label{e.Crec}
\end{align}
\end{lemma}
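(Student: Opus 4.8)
The plan is to read all three recursions directly off the definitions $A_k=\sum_m m\,a_{k,m}$, $B_k=\sum_m m\,b_{k,m}$, $C_k=A_k+B_k$, using only the recursion \eqref{e.abkmrecursion} for $a_{k,m}$ and $b_{k,m}$, the total band counts from \eqref{abkvalues}, and the Fibonacci recursion itself. No new idea is needed; the whole argument is an index shift followed by substitution.

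First I would prove \eqref{e.Arec}. From $a_{k,m}=b_{k-1,m-1}$ in \eqref{e.abkmrecursion}, reindexing $m\mapsto m+1$ (the $m=0$ term drops out since $a_{k,0}=0$ for $k\ge1$) gives
\[
A_k=\sum_m m\,a_{k,m}=\sum_m (m+1)\,b_{k-1,m}=B_{k-1}+\sum_m b_{k-1,m}=B_{k-1}+b_{k-1},
\]
and $b_{k-1}=F_{k-2}$ by \eqref{abkvalues}, which is \eqref{e.Arec}. Exactly the same reindexing applied to $b_{k,m}=a_{k-2,m-1}+2b_{k-2,m-1}$ gives
\[
B_k=\sum_m (m+1)\bigl(a_{k-2,m}+2b_{k-2,m}\bigr)=\bigl(A_{k-2}+2B_{k-2}\bigr)+\bigl(a_{k-2}+2b_{k-2}\bigr).
\]
By \eqref{abkrecursion} the last bracket is $b_k$, and $b_k=F_k-a_k=F_k-F_{k-2}=F_{k-1}$ by \eqref{abkvalues}; this is \eqref{e.Brec}.

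Finally, \eqref{e.Crec} is pure substitution. By the first two recursions, $C_k=A_k+B_k=B_{k-1}+F_{k-2}+A_{k-2}+2B_{k-2}+F_{k-1}$. On the other hand $C_{k-1}+C_{k-2}=(A_{k-1}+B_{k-1})+(A_{k-2}+B_{k-2})$, and $A_{k-1}=B_{k-2}+F_{k-3}$ by \eqref{e.Arec}, so $C_{k-1}+C_{k-2}=A_{k-2}+B_{k-1}+2B_{k-2}+F_{k-3}$. Subtracting,
\[
C_k-(C_{k-1}+C_{k-2})=F_{k-1}+F_{k-2}-F_{k-3}=2F_{k-2},
\]
using $F_{k-1}=F_{k-2}+F_{k-3}$, which gives \eqref{e.Crec}.

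There is no real obstacle; the only points needing care are the boundary terms in the index shifts and the range of validity of \eqref{abkvalues}. The shift causes no trouble because $a_{k,m}$ and $b_{k,m}$ vanish outside the window $\lceil k/2\rceil\le m\le\lfloor 2k/3\rfloor$ (in particular $a_{k,0}=0$ for $k\ge1$ and $b_{k,0}=0$ for $k\ge2$), and \eqref{abkvalues} holds for $k\ge2$; so one must simply check the smallest values $k=1,2,3$ by hand against the initial data in \eqref{e.abkmrecursion}, with the convention $F_{-1}=F_1-F_0=0$. These low-index checks (e.g.\ $A_2=B_2=1$, $C_2=2$; $A_3=B_3=2$, $C_3=4$) are immediate.
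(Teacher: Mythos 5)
Your proof is correct and follows essentially the same route as the paper's: read \eqref{e.Arec} and \eqref{e.Brec} off the recursion \eqref{e.abkmrecursion} by splitting $m=(m-1)+1$ (your index shift $m\mapsto m+1$ is the same move), identify the resulting plain band counts via \eqref{abkvalues}, and then obtain \eqref{e.Crec} by substitution plus the Fibonacci recursion. The only cosmetic difference is that in \eqref{e.Brec} you recognize $a_{k-2}+2b_{k-2}$ as $b_k$ via \eqref{abkrecursion} before converting to a Fibonacci number, whereas the paper converts $a_{k-2}$ and $b_{k-2}$ separately; and in \eqref{e.Crec} you compute $C_k-(C_{k-1}+C_{k-2})$ directly rather than massaging $C_k$ step by step. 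Both are harmless reorganizations of the same argument, and your explicit remarks about boundary terms in the index shift and low-$k$ cases are sound.
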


\begin{proof}
We have
\begin{align*}
A_k & = \sum_m m a_{k,m} \\
& = \sum_m m b_{k-1,m-1} \\
& = \sum_m (m - 1 + 1) b_{k-1,m-1} \\
& = B_{k-1} + b_{k-1} \\
& = B_{k-1} + F_{k-2}.
\end{align*}
Here we used \eqref{e.abkmrecursion} in the second step, \eqref{abkvalues} in the fifth step, and the definitions in the other steps. This establishes \eqref{e.Arec}.

Similarly, we have
\begin{align*}
B_k & = \sum_m m b_{k,m} \\
& = \sum_m m (a_{k-2,m-1} + 2b_{k-2,m-1}) \\
& = \sum_m (m - 1 + 1) a_{k-2,m-1} + 2 \sum_m (m - 1 + 1) b_{k-2,m-1} \\
& = A_{k-2} + a_{k-2} + 2 B_{k-2} + 2 b_{k-2} \\
& = A_{k-2} + F_{k-4} + 2 B_{k-2} + 2 F_{k-3} \\
& = A_{k-2} + 2 B_{k-2} + F_{k-1}.
\end{align*}
Here we used \eqref{e.abkmrecursion} in the second step, \eqref{abkvalues} in the fifth step, the Fibonacci number recursion twice in the sixth step, and the definitions in the other steps. This establishes \eqref{e.Brec}.

Finally, we have
\begin{align*}
C_k & = A_k + B_k \\
& = B_{k-1} + F_{k-2} + A_{k-2} + 2 B_{k-2} + F_{k-1} \\
& = B_{k-1} + C_{k-2} + B_{k-2} + F_{k} \\
& = C_{k-1} - A_{k-1} + C_{k-2} + B_{k-2} + F_{k} \\
& = C_{k-1} - F_{k-3} + C_{k-2} + F_{k} \\
& = C_{k-1} + C_{k-2} + 2F_{k-2}.
\end{align*}
Here we used \eqref{e.Arec} and \eqref{e.Brec} in the second step, the definition and the Fibonacci number recursion in the third step, \eqref{e.Arec} in the fourth step, and the Fibonacci number recursion twice in the sixth step. This establishes \eqref{e.Crec}.
\end{proof}

\begin{prop}\label{p.combinatoriallimit}
We have
\begin{equation}\label{e.ckkfklimit}
\lim_{k \to \infty} \frac{C_k}{k F_k} = \frac{4}{5 + \sqrt{5}}.
\end{equation}
In particular,
$$
\frac{\log \varphi}{\lim_{k \to \infty} \frac{C_k}{k F_k}} = \frac{5 + \sqrt{5}}{4} \log \varphi \approx 1.80902 \log \varphi.
$$
\end{prop}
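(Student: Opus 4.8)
The plan is to read off the asymptotics of $C_k$ directly from the linear recursion \eqref{e.Crec}, namely $C_k = C_{k-1} + C_{k-2} + 2F_{k-2}$. This is a constant-coefficient inhomogeneous recurrence whose associated homogeneous equation $x_k = x_{k-1} + x_{k-2}$ has characteristic roots $\varphi$ and $\psi := \tfrac{1-\sqrt 5}{2} = -\varphi^{-1}$. The decisive feature is that the forcing term $2F_{k-2}$ itself solves the homogeneous recursion, so we are in the resonant case: a particular solution cannot be a Fibonacci-type sequence but must carry an extra factor of $k$. Consequently $C_k$ grows like $\mathrm{const}\cdot k\varphi^k$, the ratio $\tfrac{C_k}{kF_k}$ converges, and the whole task is to identify the constant.

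Concretely, I would look for a Fibonacci-type sequence $w_k = p\,F_{k-1} + q\,F_{k-2}$ (so that $w_k = w_{k-1} + w_{k-2}$) for which $\widehat C_k := C_k - k\,w_k$ satisfies the homogeneous recursion. A one-line computation, using that $w$ is Fibonacci-type, gives
$$
\widehat C_k - \widehat C_{k-1} - \widehat C_{k-2} = 2F_{k-2} - \bigl( w_{k-1} + 2w_{k-2} \bigr),
$$
so we need $w_{k-1} + 2w_{k-2} = 2F_{k-2}$. Expressing $w_{k-1} + 2w_{k-2}$ in the basis $\{F_{k-2}, F_{k-3}\}$ (using $F_{k-4} = F_{k-2} - F_{k-3}$) turns this into $(p+2q)F_{k-2} + (2p-q)F_{k-3} = 2F_{k-2}$, i.e.\ the linear system $p + 2q = 2$, $2p - q = 0$, whence $p = \tfrac25$ and $q = \tfrac45$. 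With this choice $\widehat C_k$ solves the homogeneous recursion for all large $k$, hence $\widehat C_k = c_1\varphi^k + c_2\psi^k$ for constants fixed by two consecutive values of $C_k$ (which come from the definitions of $A_k$ and $B_k$; their exact values are irrelevant). In particular $\widehat C_k = O(\varphi^k) = o(k\varphi^k)$, so $\widehat C_k/(kF_k) \to 0$.

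It then remains to compute $\lim_{k\to\infty} w_k/F_k$, using $F_{k-1}/F_k \to \varphi^{-1}$ and $F_{k-2}/F_k \to \varphi^{-2}$:
$$
\lim_{k\to\infty}\frac{C_k}{kF_k} = \lim_{k\to\infty}\Bigl( \frac{w_k}{F_k} + \frac{\widehat C_k}{kF_k}\Bigr) = \frac{p}{\varphi} + \frac{q}{\varphi^2} = \frac{2\varphi + 4}{5\varphi^2} = \frac{2(\varphi+2)}{5(\varphi+1)},
$$
where the last equality uses $\varphi^2 = \varphi + 1$. Substituting $\varphi = \tfrac{1+\sqrt5}{2}$ and rationalizing yields $\frac{2(\varphi+2)}{5(\varphi+1)} = \frac{2(5+\sqrt5)}{5(3+\sqrt5)} = \frac{5-\sqrt5}{5} = \frac{4}{5+\sqrt5}$, which is \eqref{e.ckkfklimit}. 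The ``in particular'' claim is then immediate, since $\frac{\log\varphi}{4/(5+\sqrt5)} = \frac{5+\sqrt5}{4}\log\varphi$ and $\frac{5+\sqrt5}{4} \approx 1.80902$. The argument is a routine exercise in linear recurrences; the only step requiring a moment's care is the resonance — that the inhomogeneity $2F_{k-2}$ lies in the solution space of the homogeneous equation — which is precisely what forces the factor $k$ in the leading term and makes $\frac{C_k}{kF_k}$, rather than $\frac{C_k}{F_k}$, the right normalization.
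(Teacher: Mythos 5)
Your argument is correct and takes a genuinely different route from the paper's. The paper subtracts $\beta k F_k$ directly (with $\beta = \tfrac{4}{5+\sqrt5} = \tfrac{2}{\varphi+2}$), obtains the residual recursion $R_k - R_{k-1} - R_{k-2} = \beta\bigl(F_{k-2}\varphi - F_{k-1}\bigr)$, then invokes the continued-fraction bound $|F_{k-2}\varphi - F_{k-1}| < 1/F_{k-1}$ and iterates that bound term by term to conclude $|R_k| = O(F_k)$. You instead choose the ansatz so that the residual is \emph{exactly} homogeneous: subtracting $k w_k$ with $w_k = \tfrac25 F_{k-1} + \tfrac45 F_{k-2}$ gives $\widehat C_k - \widehat C_{k-1} - \widehat C_{k-2} = 0$ once the recursion for $C_k$ kicks in, so $\widehat C_k = c_1\varphi^k + c_2\psi^k = O(\varphi^k) = o(kF_k)$ at once, with no error control needed. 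That is the classical ``method of undetermined coefficients'' for a resonant constant-coefficient recursion, and it is cleaner than the paper's computation: the paper's choice $\beta F_k$ is not itself a solution of the Fibonacci recursion (only asymptotically proportional to one), which is what introduces the Diophantine error term it then has to tame by hand. Your approach is more systematic and makes the origin of the $k$-normalization (resonance of the forcing $2F_{k-2}$ with the homogeneous characteristic roots) more transparent; the paper's has the minor virtue of working directly with the constant $\beta$ that appears in the statement. Both are correct, and the arithmetic at the end of your write-up ($\tfrac{2(\varphi+2)}{5(\varphi+1)} = \tfrac{5-\sqrt5}{5} = \tfrac{4}{5+\sqrt5}$) checks out.
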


\begin{proof}
Set
$$
\beta = \frac{4}{5 + \sqrt{5}} = \frac{2}{\varphi + 2}
$$
and $R_k = C_k - \beta k F_k$. Then,
\begin{align*}
R_k - R_{k-1} - R_{k-2} & = C_k - C_{k-1} - C_{k-2} - \beta k F_k + \beta (k-1) F_{k-1} + \beta (k-2) F_{k-2} \\
& = 2F_{k-2} - \beta F_{k-1} - 2 \beta F_{k-2} \\
& = 2F_{k-2} \left( 1 - \frac{\beta}{2} \frac{F_{k-1}}{F_{k-2}} - \beta \right) \\
& = 2F_{k-2} \left( 1 - \frac{\beta}{2} \left( \varphi + \left( \frac{F_{k-1}}{F_{k-2}} - \varphi \right) \right) - \beta \right) \\
& = 2F_{k-2} \frac{\beta}{2} \left( \varphi - \frac{F_{k-1}}{F_{k-2}} \right) \\
& = \beta \left( F_{k-2} \varphi - F_{k-1} \right)
\end{align*}
For the fifth step, note that
$$
1 - \frac{1}{\varphi + 2} \varphi - \frac{2}{\varphi + 2} = 0.
$$
By a standard estimate from the theory of continued fractions, this shows that
\begin{equation}\label{e.Rkrecestimate}
|R_k - R_{k-1} - R_{k-2}| < \frac{\beta}{F_{k-1}}.
\end{equation}
Set $C = \max \{ |R_1| , |R_2| \}$ and apply \eqref{e.Rkrecestimate} repeatedly to obtain
\begin{align*}
|R_1| & \le C \\
|R_2| & \le C \\
|R_3| & < 2C + \frac{\beta}{F_{2}} \\
|R_4| & < 3C + \frac{\beta}{F_{2}} + \frac{\beta}{F_{3}} \\
|R_5| & < 5C + 2\frac{\beta}{F_{2}} + \frac{\beta}{F_{3}} + \frac{\beta}{F_{4}} \\
|R_6| & < 8C + 3\frac{\beta}{F_{2}} + 2 \frac{\beta}{F_{3}} + \frac{\beta}{F_{4}} + \frac{\beta}{F_{5}} \\
& \; \; \vdots \\
|R_k| & < F_{k-1} C + F_{k-2} \frac{\beta}{F_{2}} + F_{k-3} \frac{\beta}{F_{3}} + F_{k-4} \frac{\beta}{F_{4}} + \cdots + F_0 \frac{\beta}{F_{k}} \\
& = F_k \left( \frac{F_{k-1}}{F_k} C + \frac{F_{k-2}}{F_k} \frac{\beta}{F_{2}} + \frac{F_{k-3}}{F_k} \frac{\beta}{F_{3}} + \frac{F_{k-4}}{F_k} \frac{\beta}{F_{4}} + \cdots + \frac{F_0}{F_k} \frac{\beta}{F_{k}} \right).
\end{align*}
This implies $|R_k| = O(F_k)$, and in particular
$$
\lim_{k \to \infty} \frac{R_k}{k F_k} = 0.
$$
In view of $R_k = C_k - \beta k F_k$, this establishes \eqref{e.ckkfklimit} and concludes the proof of the proposition.
\end{proof}

We are now in a position to prove \eqref{e.doesmeasureasymptotics}. This result will be an easy consequence of Proposition~\ref{p.dimnu}, the estimates \eqref{e.xkeiestimate}, and Proposition~\ref{p.combinatoriallimit}.

\begin{proof}[Proof of \eqref{e.doesmeasureasymptotics}.]
By \eqref{e.dimnuzeros}, we have
$$
\dim_H \nu_\lambda = \frac{\log \varphi}{\lim_{k \to \infty} \frac{1}{kF_k} \log \left( \prod_{i = 1}^{F_k} \left| x_k'(E_k^{(i)}) \right| \right)}
$$
for $\lambda \ge 16$. By \eqref{e.xkeiestimate}, we have
$$
S_l(\lambda)^{m(E_k^{(i)})} \le |x_k'(E_k^{(i)})| \le S_u(\lambda)^{m(E_k^{(i)})},
$$
where $m(E_k^{(i)})$ denotes the number of spectra $\sigma_j$, $1 \le j \le k-1$, $E_k^{(i)}$ belongs to, and $S_l(\lambda)$, $S_u(\lambda)$ are given in \eqref{e.slulambda}. Thus,
$$
\log \left( \prod_{i = 1}^{F_k} \left| x_k'(E_k^{(i)}) \right| \right) = \sum_{i = 1}^{F_k} \log \left| x_k'(E_k^{(i)}) \right| = \sum_{m = \lceil \frac{k}{2} \rceil}^{\lfloor \frac{2k}{3} \rfloor} \sum_{m(E_k^{(i)}) = m} \log \left| x_k'(E_k^{(i)}) \right|,
$$
and hence
$$
\lim_{k \to \infty} \frac{1}{kF_k} \log \left( \prod_{i = 1}^{F_k} \left| x_k'(E_k^{(i)}) \right| \right) \in \left[ \frac{4}{5 + \sqrt{5}} \log S_l(\lambda) , \frac{4}{5 + \sqrt{5}} \log S_u(\lambda) \right]
$$
by \eqref{e.ckkfklimit} in Proposition~\ref{p.combinatoriallimit}. We obtain
$$
\lim_{\lambda \to \infty} \dim_H \nu_\lambda \cdot \log \lambda = \lim_{\lambda \to \infty} \frac{\log \varphi \cdot \log \lambda}{\lim_{k \to \infty} \frac{1}{kF_k} \log \left( \prod_{i = 1}^{F_k} \left| x_k'(E_k^{(i)}) \right| \right)} = \frac{5 + \sqrt{5}}{4} \log \varphi,
$$
which concludes the proof.
\end{proof}

\section{The Optimal H\"older Exponent}

In this section we provide an explicit expression for the optimal H\"older exponent of the integrated density of states for the Fibonacci Hamiltonian. It is based on the following dynamical result.

\begin{theorem}\label{t.tech}
Let $T : M^2 \to M^2$ be a $C^{1+\alpha}$-diffeomorphism with a {\rm (}topologically{\rm )} zero-dimensional basic set $\Lambda$, and $\mu_{max}$ be the measure of maximal entropy for $T_{\Lambda}$. Let $L \subset M$ be a smooth curve transversal to $W^s(\Lambda)$ with parametrization $L : \mathbb{R} \to M^2$ such that $L \cap W^s(\Lambda)$ is compact. Let $R$ be an element of a Markov partition for $\Lambda$, and let $\pi : \Lambda \cap R \to L$ be a continuous projection along the stable manifolds. Set $\nu = L^{-1} \circ \pi(\mu_{max}|_R)$, and denote by $\gamma$ the optimal H\"older exponent of $\nu$. Then,
$$
\gamma = \frac{h_{\mathrm{top}} (T|_{\Lambda})}{\sup_{p \in Per(T|_{\Lambda})} \mathrm{Lyap}^u(p)}.
$$
In other words, we have the following:

\begin{itemize}

\item[{\rm (1)}] For any $\gamma_0 < \gamma$ and any sufficiently small interval $I \subset \mathbb{R}$, we have $\nu(I) < |I|^{\gamma_0}$;

\item[{\rm (2)}] For any $\gamma_1 > \gamma$ and any $\varepsilon > 0$, there exists an interval $I \subset \mathbb{R}$ such that $|I| < \varepsilon$ and $\nu(I) > |I|^{\gamma_1}$.

\end{itemize}
\end{theorem}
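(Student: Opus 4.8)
The plan is to transfer the question into the symbolic dynamics furnished by a Markov partition for $\Lambda$ and to read off the optimal H\"older exponent of $\nu$ from the growth of the unstable derivative along orbit segments. Fix a Markov partition for $T|_\Lambda$ having $R$ as one of its elements and let $\sigma_A:\Sigma_A\to\Sigma_A$ be the associated transitive topological Markov chain, whose leading eigenvalue is $e^{h_{\mathrm{top}}(T|_\Lambda)}$ and whose measure of maximal entropy is the Parry measure corresponding to $\mu_{max}$. Since $\Lambda$ is totally disconnected and $\pi$ collapses $\Lambda\cap R$ along stable leaves -- equivalently, along the past coordinates of the coding -- the measure $\nu=L^{-1}\circ\pi(\mu_{max}|_R)$ is the push-forward to $\R$ of the one-sided Parry measure carried by the forward cylinders $C=[i_0 i_1\cdots i_{n-1}]$ with $i_0$ the symbol of $R$; for such a $C$ let $J_C\subset\R$ be the smallest interval containing $L^{-1}(\pi(C\cap\Lambda))$. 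The first task is to establish the two basic estimates, with a constant $C_0\ge 1$ independent of $C$ and $n$:
\[
C_0^{-1}\,e^{-n\,h_{\mathrm{top}}(T|_\Lambda)}\le \nu(J_C)\le C_0\,e^{-n\,h_{\mathrm{top}}(T|_\Lambda)},\qquad C_0^{-1}\le |J_C|\cdot\|DT^n(z)|_{E^u_z}\|\le C_0\quad(z\in C\cap\Lambda).
\]
The first is the explicit form of the Parry measure -- whose cylinder masses differ from $e^{-n\,h_{\mathrm{top}}(T|_\Lambda)}$ only by bounded factors depending on the two endpoint symbols -- together with the one-sided reduction. The second is the dictionary between lengths of pieces of a transversal to $W^s(\Lambda)$ and the norms $\|DT^n|_{E^u}\|$ used in Propositions~\ref{p.first}--\ref{p.second}, now in the abstract setting; it rests on the $C^{1+\alpha}$ hypothesis, H\"older continuity of $E^u$ over $\Lambda$, transversality of $L$ to $W^s(\Lambda)$, and the Anosov closing / bounded distortion estimates of \cite[Proposition~6.4.16]{KH}. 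Along the way I would record the bounded-geometry facts that for a one-step child $C'$ of $C$ the ratios $|J_{C'}|/|J_C|$ and $\nu(J_{C'})/\nu(J_C)$ lie in a fixed compact subinterval of $(0,1)$, and that distinct depth-$n$ cylinders sharing a parent are separated along $L$ by a gap comparable in length to their own; these use finiteness of the alphabet, transitivity of $A$, and uniform hyperbolicity ($1<\min_x\|DT|_{E^u_x}\|\le\max_x\|DT|_{E^u_x}\|<\infty$).

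Next, put $\phi(x)=\log\|DT(x)|_{E^u_x}\|$ (a H\"older function on $\Lambda$), let $M_n=\max\{\sum_{j=0}^{n-1}\phi(T^jx):x\in\Lambda\}$, and set $\Phi:=\sup_{p\in Per(T|_\Lambda)}\mathrm{Lyap}^u(p)$. The key fact is that
\[
\Phi=\sup_{\mu}\int\phi\,d\mu=\lim_{n\to\infty}\frac{M_n}{n}.
\]
This is the $\sup$-counterpart of Lemma~\ref{l.clearin}: weak-$*$ density (and density in unstable Lyapunov exponent) of periodic-orbit measures among the invariant measures gives the first equality, while closing a long orbit segment to a periodic orbit by the Anosov closing lemma -- with an error that is $o(n)$ after division by $n$ -- together with Birkhoff's theorem and an almost-superadditivity argument for $(M_n)$ gives the second. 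Hence the quantity in the statement equals $\gamma=h_{\mathrm{top}}(T|_\Lambda)/\Phi$.

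For part (1), fix $\gamma_0<\gamma$. The estimates above give, for every forward cylinder $C$ of depth $n$ and $z\in C\cap\Lambda$, that $-\log\nu(J_C)\ge n\,h_{\mathrm{top}}(T|_\Lambda)-\log C_0$ and $-\log|J_C|\le M_n+\log C_0$, whence
\[
\frac{\log\nu(J_C)}{\log|J_C|}\ \ge\ \frac{n\,h_{\mathrm{top}}(T|_\Lambda)-\log C_0}{M_n+\log C_0}\ \xrightarrow[n\to\infty]{}\ \frac{h_{\mathrm{top}}(T|_\Lambda)}{\Phi}=\gamma,
\]
so $\nu(J_C)\le|J_C|^{\gamma_0}$ uniformly once $\mathrm{depth}(C)\ge n_0$. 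For an arbitrary short interval $I$ with $\nu(I)>0$, let $\mathcal{D}_I$ be the set of cylinders $C$ meeting $I$ with $|J_C|\le|I|<|J_{\widehat C}|$, where $\widehat C$ denotes the parent of $C$. These cylinders are pairwise disjoint, cover $I\cap\mathrm{supp}\,\nu$, each has $|J_C|$ comparable to $|I|$ and lies within the $|I|$-neighbourhood of $I$ (so $\#\mathcal{D}_I$ is bounded), and each has depth tending to $\infty$ as $|I|\to0$ (since $|J_C|$ is at least a constant times $(\max_x\|DT|_{E^u_x}\|)^{-\mathrm{depth}(C)}$). Summing the cylinder bound, $\nu(I)\le\#\mathcal{D}_I\cdot(\mathrm{const}\cdot|I|)^{\gamma_0}<|I|^{\gamma_0'}$ for any $\gamma_0'<\gamma_0$ once $|I|$ is small; as $\gamma_0<\gamma$ was arbitrary, (1) follows. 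For part (2), fix $\gamma_1>\gamma$ and, using the extremal rate, pick a periodic point $p$ whose orbit passes through $R$ -- attainable by transitivity, gluing a near-maximizing periodic orbit to a bounded bridge into $R$, at a cost that is $o(1)$ in $\mathrm{Lyap}^u$ -- with $h_{\mathrm{top}}(T|_\Lambda)/\mathrm{Lyap}^u(p)<\gamma_1$; let $\tau$ be its period and $C_m$ the depth-$m\tau$ forward cylinder following the itinerary of $p$. Then $\nu(J_{C_m})$ is comparable to $e^{-m\tau\,h_{\mathrm{top}}(T|_\Lambda)}$ and $|J_{C_m}|$ to $\|DT^{m\tau}(p)|_{E^u_p}\|^{-1}=e^{-m\tau\,\mathrm{Lyap}^u(p)}$, so $\log\nu(J_{C_m})/\log|J_{C_m}|\to h_{\mathrm{top}}(T|_\Lambda)/\mathrm{Lyap}^u(p)<\gamma_1$, whence $\nu(J_{C_m})>|J_{C_m}|^{\gamma_1}$ for all large $m$ while $|J_{C_m}|\to0$; this proves (2).

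The main obstacle is the second basic estimate of the first paragraph: making the comparison of $|J_C|$ with $\|DT^n|_{E^u}\|^{-1}$ genuinely uniform requires controlling the stable holonomy onto the transversal $L$ and the distortion of $T^n$ along unstable pieces by constants independent of the cylinder and its depth, together with the accompanying bounded-geometry statements used in part (1). The extremal-rate identity is a routine adaptation of Lemma~\ref{l.clearin} with $\inf$ replaced by $\sup$, and the two bounds are then bookkeeping on those geometric inputs.
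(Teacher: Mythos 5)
Your proposal follows essentially the same strategy as the paper: compare $\nu$-mass of a piece of the spectrum, controlled by $e^{-N h_{\mathrm{top}}}$ via the measure of maximal entropy, against its length, controlled by $\|DT^N|_{E^u}\|^{-1}$ via bounded distortion and the stable holonomy, and then pass to $\sup_{p\in Per}\mathrm{Lyap}^u(p)$ through Anosov closing. The place where you and the paper diverge is the mechanism for handling an arbitrary small interval $I$. The paper sets $R_I=\pi^{-1}(L(I))$ and lets $N$ be the escape time of $R_I$ from a single Markov-partition element; this makes the two comparisons (for $\mu_{\max}(R_I)$ and for $|I|$) simultaneously, with a single scale $N$, and avoids any need to count cylinders. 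You instead work symbolically: you establish the two basic estimates on forward cylinders $J_C$, then cover $I$ by the family $\mathcal D_I$ of cylinders of size just below $|I|$, using disjointness, comparable sizes, and the gap structure to bound $\#\mathcal D_I$. Both routes are correct; yours trades a single clean choice of scale for a bounded-multiplicity covering, which needs the extra bounded-geometry bookkeeping you flag (one-step ratio bounds and gap comparability). In compensation, you are explicitly more careful on two points that the paper treats lightly: (i) you state and justify the $\sup$-analogue of Lemma~\ref{l.clearin}, namely $\sup_{p\in Per}\mathrm{Lyap}^u(p)=\lim_n M_n/n$, which is what actually licenses the step $\tfrac1N\log\|DT^N(p)|_{E^u}\|\le\sup_p\mathrm{Lyap}^u(p)+\varepsilon$ for a non-periodic $p$ depending on $I$; and (ii) in part~(2) you explain how transitivity lets one route a near-extremal periodic orbit through the fixed rectangle $R$ at negligible cost to the Lyapunov exponent, where the paper simply asserts the existence of a suitable $q\in R$. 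Net: same proof at heart, a different covering device, and a couple of gaps in the paper's exposition filled in.
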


\begin{proof}
Fix any $\gamma_0 \in (0,\gamma)$ and suppose that $I = [E_0,E_1] \subset \R$ is sufficiently small (we will determine the appropriate smallness condition later). Without loss of generality we can assume that $L(E_0) , L(E_1) \in W^s(\Lambda)$ (otherwise we can decrease the size of $I$ without changing its measure).

Consider the rectangle $R_I = \pi^{-1}(L(I)) \subset R$. Then, $\mu_\mathrm{max}(R_I) = \nu(I)$. Let $N \in \Z_+$ be the smallest value such that $T^N(R_I) \cap \Lambda$ is not a subset of just one element of the Markov partition (and hence has size of order one). We claim that
$$
C^{-1} \le \left| \frac{\mu_\mathrm{max}(R_I)}{e^{-N h_\mathrm{top}(T|_\Lambda)}} \right| \le C
$$
with $C$ uniform for all sufficiently small $I$. Indeed, consider the topological Markov chain $\sigma_A : \Sigma_A \to \Sigma_A$ conjugate to $T|_\Lambda : \Lambda \to \Lambda$. Then,
$$
\mu_\mathrm{max}(R_I) = \lim_{M \to \infty} \frac{\# (\mathrm{Fix} (T^M) \cap R_I)}{\# \mathrm{Fix} (T^M)}.
$$
But for large $M$ and $N$, we have
$$
\# \mathrm{Fix} (T^M) = \mathrm{Tr} (A^M)=  e^{M h_\mathrm{top}(T|_\Lambda)}(1+o(1)),
$$
since the largest eigenvalue of $A$ is equal to $e^{h_\mathrm{top}(T|_{\Lambda_\lambda})}$. At the same time the number of periodic orbits of period $M$ with prescribed initial segment of length $N < M$ is given by
$$
\quad \# (\mathrm{Fix} (T^M) \cap R_I) = e^{(M-N) h_\mathrm{top}(T|_\Lambda)} \cdot O(1),
$$
where $O(1)$ is bounded from above and away from zero uniformly in all $1 \ll N \ll M$, and hence $\mu_\mathrm{max}(R_I) = e^{-N h_\mathrm{top}(T|_\Lambda)} \cdot O(1)$.

On the other hand, $|I| = E_1 - E_0$ is of order of the width of $R_I$. Pick any point $p \in \Lambda \cap R_I$ and consider $W^u_\mathrm{loc}(p) \cap R_I$. Since a holonomy map along stable manifolds is $C^1$, we have
$$
|I| = | W^u_\mathrm{loc}(p) \cap R_I |\cdot O(1).
$$
The usual distortion argument shows also that
$$
| W^u_\mathrm{loc}(p) \cap R_I |= \frac{1}{\|DT^N(p)|_{E^u}\|} \cdot O(1),
$$
and hence for any $\varepsilon > 0$, $\varepsilon < \gamma - \gamma_0$,
\begin{align*}
\frac{\log \nu(I)}{\log |I|} & = \frac{-N h_\mathrm{top}(T|_\Lambda)+O(1)}{-\log \|DT^N(p)\|+O(1)} \\
& > \frac{h_\mathrm{top}(T|_\Lambda)}{\mathrm{Lyap}^u(p)} -\varepsilon \\
& \ge \frac{h_\mathrm{top}(T|_\Lambda)}{\sup_p \mathrm{Lyap}^u(p)} - \varepsilon \\
& = \gamma - \varepsilon>\gamma_0
\end{align*}
if $N$ is large enough (which can be guaranteed by choosing sufficiently small $|I|$). Therefore $\nu(I) < |I|^{\gamma_0}$.

\bigskip

Let us now take an arbitrary $\gamma_1 > \gamma$. There exists a periodic point $q \in R$ such that
$$
\frac{h_\mathrm{top}(T|_\Lambda)}{\mathrm{Lyap}^u(q)} < \gamma_1.
$$
For a given $\varepsilon \in (0, \gamma_1 - \gamma)$, consider a narrow rectangle $R_I \subset R$, $I \subset \R$, $R_I = \pi^{-1}(L(I))$, such that $|I| < \varepsilon$ and $q \in R_I$. If $N \in \Z_+$ is the smallest number such that $T^N(R_I)$ does not belong to one element of the Markov partition, then
$$
|I| = \frac{O(1)}{\|DT^N(q)|_{E^u_q}\|}
$$
and
$$
\nu(I) = \mu_\mathrm{max}(R_I) = e^{-N h_\mathrm{top}(T|_\Lambda)}\cdot O(1).
$$
Hence,
$$
\frac{\log \nu(I)}{\log |I|} = \frac{-N h_\mathrm{top}(T|_\Lambda)+O(1)}{-N(\frac1N \log \|DT^N(q)|_{E^u_q}\|+O(1)} \le \frac{h_\mathrm{top} (T|_\Lambda)}{\mathrm{Lyap}^u(q)} + \varepsilon < \gamma_1,
$$
and therefore $\nu(I) > |I|^{\gamma_1}$.
\end{proof}

\begin{proof}[Proof of \eqref{e.hoelderexponentidentity}.]
The theorem follows as a special case from Theorem~\ref{t.tech} since the density of states measure arises from the measure of maximal entropy for the trace map in the way required for Theorem~\ref{t.tech} to be applicable. This was shown in \cite{DG12} for small values of the coupling constant $\lambda$, and due to Theorem \ref{t.transversal} the same holds for all $\lambda > 0$.
\end{proof}

For small values of the coupling constant, $\sup_{p\in Per(f|_{\Lambda})}\mathrm{Lyap}^u(p)$ is attained in the periodic points {\rm (}of period $2$ and $6${\rm )} born from the singularities of the Cayley cubic, and therefore it can be calculated explicitly {\rm (}see, e.g., the proof of \cite[Lemma~3.3]{DG13}{\rm )}. Hence we get the following

\begin{coro}\label{c.holderexp}
For $\lambda > 0$ sufficiently small, we have
{\tiny
$$
\gamma_\lambda=\frac{2\log\left(\frac{\sqrt{5}+1}{2}\right)}{\log\left({{\frac{\sqrt{2} \sqrt{256 I^2+16 \left(2 A + 3 \sqrt{2} B + \sqrt{2} A B + 35 \right) I + 22 A + 75 \sqrt{2} B + 21 \sqrt{2} A B + 250} + 16 I + A + 2 \sqrt{2} B + \sqrt{2} A B + 23}{2 A + 2 \sqrt{2} B - 2}}}\right)},
$$
}
where $I = \frac{\lambda^2}{4}$, $A = A(I)=\sqrt{16 I + 25}$, and $B = B(I) = \sqrt{8 I - \sqrt{16 I + 25} + 5}$.
\end{coro}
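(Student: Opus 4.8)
The plan is to read off $\gamma_\lambda$ from the identity \eqref{e.hoelderexponentidentity}, $\gamma_\lambda = \log\varphi\big/\sup_{p\in Per(T_\lambda)}\mathrm{Lyap}^u(p)$, just established for all $\lambda>0$, and then to evaluate the supremum explicitly in the weak-coupling regime. The substantive input I would import is the analysis in the proof of \cite[Lemma~3.3]{DG13}: for $\lambda>0$ small the supremum of $\mathrm{Lyap}^u$ over periodic orbits of $T_\lambda|_{\Lambda_\lambda}$ is attained on one of the short orbits that bifurcate from the conic singularities of the Cayley cubic $S_0$ — the period-$2$ orbit emerging from the fixed point $(1,1,1)$ of $T$, and the period-$6$ orbit emerging from the $3$-cycle $\{(-1,-1,1),(1,-1,-1),(-1,1,-1)\}$ — and it is the period-$2$ orbit that is the maximizer once $\lambda$ is turned on. Both orbits have $\mathrm{Lyap}^u\to2\log\varphi$ as $\lambda\to0$ (since $DT$ at $(1,1,1)$ has eigenvalues $-1,\varphi^{\pm2}$, and $DT^3$ at $(-1,-1,1)$ has eigenvalues $-1,\varphi^{\pm6}$), but their derivatives in $\lambda$ at $0$ differ, the period-$2$ exponent growing faster.

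It then remains to compute the unstable multiplier of that period-$2$ orbit as a function of $I=\lambda^2/4$. The orbit lies in $\{z=x\}\cap\{2xy=x+y\}\cap S_\lambda$; eliminating $y$ and substituting into $x^2+y^2+z^2-2xyz=1+I$ produces a quartic in $x$ whose relevant quadratic factor is $x^2-2qx+q$ with $q=\tfrac18\big(3+\sqrt{16I+25}\big)$ (the branch with $q\to1$ as $\lambda\to0$), so the two orbit points have $x$-coordinates $a,b=q\pm\sqrt{q^2-q}=q\pm\tfrac{1}{4\sqrt2}\sqrt{8I-\sqrt{16I+25}+5}$; this is exactly where the auxiliary quantity $B=\sqrt{8I-\sqrt{16I+25}+5}$ and the factors of $\sqrt2$ in the statement come from. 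Evaluating $DT_\lambda$ at the two points $(a,b,a)$ and $(b,a,b)$ and multiplying, one finds that $DT_\lambda^2$ along the orbit has characteristic polynomial $t^3-8qt^2+8qt-1=(t-1)\big(t^2-(8q-1)t+1\big)$ (the trace and the sum of the principal $2\times2$ minors both equal $8q$, which forces the factor $t-1$), so the transverse eigenvalue is $1$ and the unstable multiplier $\mu_u$ satisfies $\mu_u+\mu_u^{-1}=8q-1=\sqrt{16I+25}+2$. Hence $\mathrm{Lyap}^u$ of the orbit equals $\tfrac12\log\mu_u$ and $\gamma_\lambda=\dfrac{2\log\varphi}{\log\mu_u}$ with $\mu_u=\tfrac12\Big(\sqrt{16I+25}+2+\sqrt{(\sqrt{16I+25}+2)^2-4}\Big)$. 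To obtain the form displayed in the corollary one simply re-expresses $\mu_u$ through $a$ and $b$ (i.e. performs the substitution without the simplifications $a+b=2q$, $ab=q$) and checks that the resulting expression agrees with the clean one above; as a sanity check, at $I=0$ both equal $\varphi^4$, giving $\gamma_0=\tfrac12$, consistent with the known weak-coupling limit of $\gamma_\lambda$.

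The only genuinely hard part is the first step — knowing that, for small $\lambda$, no other periodic orbit of $\Lambda_\lambda$ carries a larger unstable Lyapunov exponent than these two short bifurcating orbits, and that between the two it is the period-$2$ orbit that wins. This is not a cone-field or perturbation-of-hyperbolicity triviality: it relies on the quantitative description of $\Lambda_\lambda$ in the weak-coupling limit from \cite{DG09,DG13}, namely that as $\lambda\to0$ the unstable Lyapunov spectrum of $\Lambda_\lambda$ collapses onto $\{2\log\varphi\}$, with the spread governed by how closely orbits approach the four singular points. Everything after that — the quartic, the factorization of the characteristic polynomial, and matching the two algebraic forms — is routine, and the apparent complexity of the stated formula is merely an artifact of not simplifying the symmetric functions of $a$ and $b$.
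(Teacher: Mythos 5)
Your proposal is correct and follows essentially the same route as the paper: read off $\gamma_\lambda$ from the identity \eqref{e.hoelderexponentidentity}, import the identification of the extremal period-$2$ orbit from the proof of Lemma~3.3 in \cite{DG13}, and compute its unstable multiplier via the Baake--Roberts family $P_a=(a,a/(2a-1),a)$; all your intermediate computations (the factorization of the quartic, $q=\tfrac18(3+\sqrt{16I+25})$, the factor $(t-1)$ in the characteristic polynomial of $DT^2$, and $\mu_u+\mu_u^{-1}=8q-1=2+\sqrt{16I+25}$) check out. Your further observation that the displayed formula simplifies — the multiplier depends only on the symmetric function $q=ab$, so all the $B$-terms cancel and $\mu_u=\tfrac12\bigl(2+A+\sqrt{A^2+4A}\bigr)$ — is a genuine improvement over the form given in the corollary, though it does not change the substance of the proof.
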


These periodic points of period $2$ lead to the curve in Figure \ref{fig:logphilyapunovexponents2} that is labeled as ``Period  two''.

\section{Strict Inequalities Between Spectral Characteristics}\label{s.cohomology}

In this section we prove Theorem~\ref{t.strictinequalities}, that is, we establish the strict inequalities in \eqref{e.inequalities}. They will be a consequence of the following general result.

\begin{prop}\label{p.main1}
Suppose that $\sigma_A : \Sigma_A \to \Sigma_A$ is a topological Markov chain defined by a transitive $0-1$ matrix $A$ {\rm (}i.e., some power of $A$ has only positive entries{\rm )}, and $\phi : \Sigma_A \to \R$ is a H\"older continuous function. If $\phi$ is not cohomological to zero {\rm (}in other words, there are periodic orbits with different values of averages of $\phi$ over those orbits{\rm )}, then
\begin{multline}\label{e.strictin}
\inf_{p \in Per(f)} \left( \frac{1}{\pi(p)} \sum_{i=0}^{\pi(p)-1} \phi(f^i(p))\right) = \inf_{\mu \in \frak{M}} \int \phi \, d\mu < \int\phi\, d\mu_{max} < \\
< \sup_{\mu \in \frak{M}} \int \phi \, d\mu = \sup_{p \in Per(f)} \left( \frac{1}{\pi(p)} \sum_{i=0}^{\pi(p)-1} \phi(f^i(p)) \right),
\end{multline}
where $\mu_\mathrm{max}$ is the measure of maximal entropy, $\frak{M}$ is the space of all probability Borel $\sigma_A$-invariant measures, and $\pi(p)$ is the period of a periodic point $p$.
\end{prop}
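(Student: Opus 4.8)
The plan is to deduce the whole statement from the thermodynamic formalism collected in Proposition~\ref{p.thermodynamic}, applied to the one-parameter family of potentials $t\mapsto t\phi$, $t\in\R$. We will use the hypothesis in the form that there exist two periodic orbits of $\sigma_A$ with distinct averages of $\phi$; telescoping the Birkhoff sum of $\phi = c + u\circ\sigma_A - u$ along a periodic orbit shows that this is incompatible with $\phi$ being cohomologous to a constant, so we are precisely in the setting where Proposition~\ref{p.thermodynamic}(4) yields \emph{strict} convexity of the pressure function $t\mapsto P(t\phi)$.

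\textbf{The two outer equalities.} To a periodic point $p$ of period $\pi(p)$ I associate the invariant measure $\mu_p = \frac{1}{\pi(p)}\sum_{i=0}^{\pi(p)-1}\delta_{\sigma_A^i(p)}\in\mathfrak{M}$, for which $\frac{1}{\pi(p)}\sum_{i=0}^{\pi(p)-1}\phi(\sigma_A^i(p)) = \int\phi\,d\mu_p$. Hence $\inf_{p\in Per(\sigma_A)}(\cdots)\ge\inf_{\mu\in\mathfrak{M}}\int\phi\,d\mu$ and $\sup_{p\in Per(\sigma_A)}(\cdots)\le\sup_{\mu\in\mathfrak{M}}\int\phi\,d\mu$ are immediate. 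For the reverse inequalities I would use that the periodic-orbit measures $\{\mu_p\}$ are weak-$*$ dense in $\mathfrak{M}$ --- a consequence of the specification property of the topologically mixing subshift of finite type $\sigma_A$ (the matrix $A$ is primitive by hypothesis), available from \cite[Theorem~18.3.9]{KH} --- together with the weak-$*$ compactness of $\mathfrak{M}$ and the weak-$*$ continuity of $\mu\mapsto\int\phi\,d\mu$: the supremum (resp.\ infimum) of a continuous function on a compact set equals its supremum (resp.\ infimum) over any dense subset, and is attained.

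\textbf{The strict inequalities.} Let $\mu_t\in\mathfrak{M}$ be the unique equilibrium state of $t\phi$ provided by Proposition~\ref{p.thermodynamic}(2). Since $P(0\cdot\phi) = h_{\mathrm{top}}(\sigma_A) = \sup_{\mu\in\mathfrak{M}}h_\mu$ and $\sigma_A$ is mixing, uniqueness of the equilibrium state and uniqueness of the measure of maximal entropy (the Parry measure) give $\mu_0 = \mu_{\mathrm{max}}$. By Proposition~\ref{p.thermodynamic}(4) the map $t\mapsto P(t\phi)$ is strictly convex, and by (3) it is real-analytic, so its derivative is strictly increasing; by the tangency assertion in Proposition~\ref{p.thermodynamic}(5) this derivative equals $\int\phi\,d\mu_t$. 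Therefore $t\mapsto\int\phi\,d\mu_t$ is strictly increasing, and with Proposition~\ref{p.thermodynamic}(6) for the limits we obtain
$$
\inf_{\mu\in\mathfrak{M}}\int\phi\,d\mu = \lim_{t\to-\infty}\int\phi\,d\mu_t \le \int\phi\,d\mu_{-1} < \int\phi\,d\mu_0 = \int\phi\,d\mu_{\mathrm{max}}
$$
and, in the same way,
$$
\int\phi\,d\mu_{\mathrm{max}} = \int\phi\,d\mu_0 < \int\phi\,d\mu_1 \le \lim_{t\to+\infty}\int\phi\,d\mu_t = \sup_{\mu\in\mathfrak{M}}\int\phi\,d\mu .
$$
Combined with the previous paragraph, this is exactly \eqref{e.strictin}.

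The only genuinely delicate point is ensuring that we may invoke \emph{strict} convexity of the pressure rather than mere convexity, i.e.\ that the hypothesis forbids $\phi$ from being cohomologous to a constant; the telescoping observation above settles this, so I do not expect a real obstacle here. A minor additional point to watch is that the density-of-periodic-measures input must be cited in the form valid for topologically mixing (equivalently, primitive) subshifts of finite type, which is exactly what the proposition assumes.
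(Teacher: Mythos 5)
Your proof is correct and takes essentially the same approach as the paper: Sigmund's theorem (equivalently, specification) for the density of periodic-orbit measures to obtain the outer equalities, and strict convexity of the pressure function $t\mapsto P(t\phi)$ from Proposition~\ref{p.thermodynamic} to obtain the strict inequalities. The only cosmetic difference is that the paper invokes the tangency at $(0,h_{\mathrm{top}})$ together with items (4) and (6) and leaves the rest implicit, whereas you make the argument explicit by observing that $P'(t)=\int\phi\,d\mu_t$ is strictly increasing and comparing its values at $t=-1,0,1$ and its limits as $t\to\pm\infty$; this is a clean unpacking of the same idea.
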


\begin{proof}
First of all, due to Sigmund's Theorem \cite{Sig}, the ergodic measures supported on periodic orbits are (weak-*) dense in $\frak{M}$, which implies the equalities in \eqref{e.strictin}. In order to show the strict inequalities in \eqref{e.strictin}, we apply Proposition~\ref{p.thermodynamic} in the case where $\sigma_A : \Sigma_A \to \Sigma_A$ is conjugate to $T_\lambda|_{\Lambda_\lambda}$ and the potential is given by $\phi = -\log \|DT_\lambda|_{E^u}\|$.

Since by (5) the line $h_{\mathrm{top}} (\sigma_A) + t \int \phi \, d\mu_\mathrm{max}$ is tangent to the graph of $P(t\phi)$ at $(0, h_{\mathrm{top}} (\sigma_A))$, the strict convexity, which follows from (4), together with (6) implies Proposition~\ref{p.main1}.
\end{proof}

In order to prove Theorem~\ref{t.strictinequalities}, we will show that Proposition~\ref{p.main1} applies to the case at hand. This amounts to proving that there are periodic orbits in $\Lambda_\lambda$ with different values of the averaged unstable multipliers \cite[Proposition 20.3.10]{KH}. An averaged unstable multiplier of a periodic point $p$ of period $n$ is defined to be the $n$th root of the largest (in absolute value) eigenvalue of the differential $DT^n|_p$. Henceforth, we shall write simply \textit{multiplier} for \textit{averaged unstable multiplier}.

\begin{prop}\label{prop:cohom}
For every $\lambda > 0$, there exist two periodic points {\rm (}not necessarily of the same period{\rm )} in $\Lambda_\lambda$, such that their corresponding multipliers are distinct.
\end{prop}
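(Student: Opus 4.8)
The plan is to exhibit the two required periodic points concretely: they will be the two period‑$2$ orbits of the trace map on $S_\lambda$, and the goal is to show that their averaged unstable multipliers can never coincide for $\lambda>0$. Since any period‑$2$ point has a finite, hence bounded, orbit, it automatically lies in $\Lambda_\lambda$, so it suffices to work with period‑$2$ orbits of $T$ on $S_\lambda$. Solving $T^{2}(x,y,z)=(x,y,z)$ forces $z=x$ and $y=\tfrac{x}{2x-1}$ (equivalently $2xy=x+y$); substituting into the defining equation of $S_\lambda$ and clearing denominators shows that $x$ must be a root of the explicit quartic
$$
q_\lambda(x)=4x^{4}-6x^{3}-(1+\lambda^{2})x^{2}+(4+\lambda^{2})x-\Bigl(1+\tfrac{\lambda^{2}}{4}\Bigr).
$$
The involution $x\mapsto\tfrac{x}{2x-1}$ permutes the roots of $q_\lambda$ and has no fixed point among them when $\lambda>0$ (its only fixed point is $x=1$, while $q_\lambda(1)=-\tfrac{\lambda^{2}}{4}\neq0$), so the roots split into exactly two period‑$2$ orbits, which I denote $O^{(1)},O^{(2)}$, with coordinate sums $s^{(1)},s^{(2)}$.

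The second step is to compute the multiplier of a period‑$2$ orbit in terms of $s=x+y$. The Fricke--Vogt function $G$ is $T$‑invariant, and since $\lambda^{2}/4$ is a regular value of $G$ for $\lambda>0$ we have $dG\neq0$ along $S_\lambda$; differentiating $G\circ T=G$ then shows $dG$ is a $T$‑invariant covector field, so $1$ is always an eigenvalue of $DT^{2}$ at a period‑$2$ point. As $\det DT^{2}=(\det DT)^{2}=1$, the remaining eigenvalues are $\mu,\mu^{-1}$. A direct evaluation of $\mathrm{tr}\,DT^{2}$, simplified with $2xy=x+y$, gives $\mathrm{tr}\,DT^{2}=4(x+y)$, so $\mu+\mu^{-1}=4(x+y)-1=4s-1$. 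Because $\Lambda_\lambda$ is hyperbolic the orbit is hyperbolic, hence $|\mu|>1$, equivalently $4s-1\notin[-2,2]$, i.e.\ $s\notin[-\tfrac14,\tfrac34]$; and the averaged unstable multiplier of the orbit is $|\mu|^{1/2}$, with $\mu$ the solution of $\mu+\mu^{-1}=4s-1$ of modulus $>1$.

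The conclusion is then immediate from Vieta's formula: the four roots of $q_\lambda$ sum to $\tfrac64=\tfrac32$, so $s^{(1)}+s^{(2)}=\tfrac32$. If $O^{(1)}$ and $O^{(2)}$ had equal multipliers, then $|\mu^{(1)}|=|\mu^{(2)}|$, and as both $\mu^{(i)}$ are real we would have $\mu^{(1)}=\pm\mu^{(2)}$. In the case $\mu^{(1)}=\mu^{(2)}$ one gets $4s^{(1)}-1=4s^{(2)}-1$, hence $s^{(1)}=s^{(2)}=\tfrac34$, which violates hyperbolicity; in the case $\mu^{(1)}=-\mu^{(2)}$ one gets $4s^{(1)}-1=-(4s^{(2)}-1)$, hence $s^{(1)}+s^{(2)}=\tfrac12\neq\tfrac32$. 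Both are contradictions, so the two period‑$2$ orbits have distinct multipliers, which proves Proposition~\ref{prop:cohom}.

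The step I expect to take the most care is justifying that for \emph{every} $\lambda>0$ the quartic $q_\lambda$ genuinely has four real roots — equivalently, that $T_\lambda$ has two distinct period‑$2$ orbits in $\Lambda_\lambda$ — since a priori a pair of roots could leave the real locus for intermediate $\lambda$, and the Vieta relation above needs all four roots to be accounted for by these two orbits. I would argue this dynamically rather than via the discriminant of $q_\lambda$. At $\lambda=0$ one has $q_0(x)=(x-1)^{2}(4x^{2}+2x-1)$: the double root $x=1$ is the Cayley‑cubic singularity $P_1=(1,1,1)$, and $\tfrac{-1\pm\sqrt5}{4}$ form a genuine period‑$2$ orbit on $\mathbb{S}$. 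As $\lambda$ is turned on, $P_1$ leaves $S_\lambda$ but, the leading local coefficient being positive, its double root splits into two real roots, producing a second period‑$2$ orbit; so there are two distinct real period‑$2$ orbits for small $\lambda>0$. Since $\Lambda_\lambda$ is a locally maximal hyperbolic set varying continuously in $\lambda$ on all of $(0,\infty)$, hyperbolic continuation makes the number of period‑$2$ orbits in $\Lambda_\lambda$ locally constant, hence constant equal to $2$ on $(0,\infty)$ (it is at most $2$ because $\deg q_\lambda=4$ and distinct orbits are disjoint). Therefore all four roots of $q_\lambda$ are real for every $\lambda>0$, they are exactly the coordinates of the two period‑$2$ orbits, and the relation $s^{(1)}+s^{(2)}=\tfrac32$ used above holds throughout.
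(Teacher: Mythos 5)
Your proof is correct, and it takes a genuinely different route than the paper's. The paper relies on the Baake--Roberts families $P_a$ (period $2$) and $Q_b$ (period $4$) together with their pre-computed eigenvalue equations, imposes $I(P_a)=I(Q_b)=\lambda^2/4$ and equality of the averaged multipliers, and derives the cubic constraint $8a^3-4a+1=0$, which is shown to have no root in $[1,\infty)$. You instead work exclusively with the two period-2 orbits on $S_\lambda$: you compute $\mathrm{tr}\,DT^2=4(x+y)$ directly at a period-2 point (which, I checked, reproduces the Baake--Roberts formula $\mu+\mu^{-1}=(8a^2-2a+1)/(2a-1)$ via $4s-1$), combine it with the Fricke--Vogt eigenvalue $1$ and $\det DT^2=1$, and then exploit the Vieta relation $s^{(1)}+s^{(2)}=3/2$ for your quartic $q_\lambda$. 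Equal multipliers would force either $s^{(1)}=s^{(2)}=3/4$ (incompatible with hyperbolicity, $s\notin[-1/4,3/4]$) or $s^{(1)}+s^{(2)}=1/2$, and both are ruled out. The one soft spot is your use of hyperbolic continuation of $\Lambda_\lambda$ to guarantee that $q_\lambda$ has four real roots for every $\lambda>0$; this works but invokes the (nontrivial) persistence of the horseshoe, and it can be replaced by a self-contained sign-change argument: $q_\lambda(0)=-(1+\tfrac{\lambda^2}{4})<0$, $q_\lambda(\tfrac12)=\tfrac14>0$, $q_\lambda(1)=-\tfrac{\lambda^2}{4}<0$, and $q_\lambda(x)\to+\infty$ as $x\to\pm\infty$, so $q_\lambda$ has one real root in each of $(-\infty,0)$, $(0,\tfrac12)$, $(\tfrac12,1)$, $(1,\infty)$. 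What your approach buys is a symmetric, purely period-2 argument that does not need the period-4 family or an external eigenvalue computation; the paper's approach, by contrast, only ever needs one explicitly chosen orbit from each of two families rather than tracking both period-2 orbits at once.
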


This result is known for all $\lambda > 0$ sufficiently close to zero. Indeed, in that case one computes the multipliers for the period-six periodic point $p = (0, 0, a)$, with suitable $a\in \R$ such that $p\in S_0$, and for the fixed point $q = (1,1,1)$ explicitly. These multipliers are distinct, and a perturbation argument shows that for all $\lambda > 0$ sufficiently small, there exist a period-two periodic point and a period-six periodic point with different multipliers; see \cite{DG12} for details.

\begin{proof}[Proof of Proposition \ref{prop:cohom}.]
In \cite{Roberts1994} Baake and Roberts calculated a few periodic orbits, among which are two families of periodic points of period two and four, respectively. These are given, respectively, by
$$
P_a\eqdef \left(a, \frac{a}{2a - 1}, a\right)
$$
and
$$
Q_b \eqdef \left(-\frac{1}{2}, b, -\frac{1}{2}\right).
$$
Here, $a, b \in \R$. On each level surface $S_\lambda$, we can find points from these families. Namely, $a$ and $b$ simply need to be chosen in such a way that
\begin{equation}\label{e.ipaiqblambda}
I(P_a) = I(Q_b) = \frac{\lambda^2}{4},
\end{equation}
where $I$ is the Fricke-Vogt invariant. Note that $I(P_1) = I(Q_1) = 0$ and that $\lim_{a \to \infty} I(P_a) = \infty$ and $\lim_{b \to \infty} I(P_b) = \infty$. Thus, by continuity, for each $\lambda \ge 0$, it is possible to find $a,b \in [1,\infty)$ so that \eqref{e.ipaiqblambda} holds.

We claim that for every $\lambda \ge 0$, the multipliers of $P_a$ and $Q_b$ on $S_\lambda$ are different. The proposition obviously follows from this claim.

Assume that this claim fails. Then there exist $a,b \in [1,\infty)$ so that $I(P_a) = I(Q_b) \ge 0$ and the multipliers of $P_a$ and $Q_b$ coincide. The identity $I(P_a) = I(Q_b)$ implies that
\begin{equation}\label{e.multiplierproof1}
2a^2 + \frac{a^2}{(2a-1)^2} - \frac{2a^3}{2a-1} = \frac{1}{2} + b^2 - \frac{b}{2}.
\end{equation}
On the other hand, it was shown by Baake and Roberts that the unstable eigenvalue of $DT^2|_{P_a}$ is a root of the equation
\begin{equation}\label{e.multiplierproof3}
\mu^2 - \frac{8a^2 - 2a + 1}{2a-1} \mu + 1 = 0,
\end{equation}
while the unstable eigenvalue of of $DT^4|_{Q_b}$ is a root of the equation
\begin{equation}\label{e.multiplierproof4}
\mu^2 - (8(1 - 2b)b + 1) \mu + 1 = 0;
\end{equation}
see \cite[p.~850]{Roberts1994}. Due to Vieta's formulas, the roots of the equation $\mu^2 + (v^2 - 2) \mu + 1 = 0$ are squares of the roots of the equation $\mu^2 + v \mu + 1 = 0$. Thus, if the two multipliers in question coincide, it follows from \eqref{e.multiplierproof3} and \eqref{e.multiplierproof4} that
$$
\left( \frac{8a^2 - 2a + 1}{2a-1} \right)^2 - 2 = -8(1-2b)b-1,
$$
or, equivalently,
\begin{equation}\label{e.multiplierproof2}
\left( \frac{8a^2 - 2a + 1}{2a-1} \right)^2 - 2 = 16 \left( b^2 - \frac{b}{2} \right) - 1.
\end{equation}
It follows from \eqref{e.multiplierproof1} and \eqref{e.multiplierproof2} that
$$
\frac{(8a^2 - 2a + 1)^2}{(2a-1)^2} + 7 = 16 \frac{4a^4 - 6a^3 + 3a^2}{(2a-1)^2},
$$
which in turn implies that
$$
8a^3 - 4a + 1 = 0.
$$
Write $P(a) = 8a^3 - 4a + 1$. The critical numbers of this polynomial of degree $3$ are $\pm \frac{1}{\sqrt{6}}$. Thus, $P$ is strictly increasing on $[1,\infty)$. Since $P(1) = 5$, $P$ does not vanish for any $a \in [1,\infty)$; contradiction. This shows that the two multipliers cannot be equal, and the claim follows.
\end{proof}

\begin{rem}
The two families of periodic points of period two and four, respectively, used in the proof of Proposition \ref{prop:cohom} are the ones that lead to the curves in Figures~\ref{fig:logphilyapunovexponents} and \ref{fig:logphilyapunovexponents2} which are labeled with period two and four, respectively.
\end{rem}

\begin{proof}[Proof of Theorem \ref{t.strictinequalities}]
The strict inequalities $\gamma_\lambda < \dim_H \nu_\lambda <  \tilde \alpha^\pm_u(\lambda)$ follow directly from Theorem~\ref{t.identities}, Proposition~\ref{p.thermodynamic}, and Proposition~\ref{prop:cohom}. The inequalities  $\dim_H \nu_\lambda < \dim_H \Sigma_\lambda < \tilde \alpha^\pm_u(\lambda)$ follow from the strict convexity of the pressure function $P(t\phi)$ with $\phi = -\log \|DT_\lambda|_{E^u}\|$, the fact that $\dim_H \Sigma_\lambda$ is the only zero of $P(t\phi)$ (see \cite{MM}), and the expression for $\tilde \alpha^\pm_u(\lambda)$ from Theorem \ref{t.identities}.
\end{proof}

\section{Extensions and Generalizations}\label{s.extensions}

While we have focused up to this point on the classical Fibonacci Hamiltonian, much of what we do extends either partly or fully to other types of operators. Also, we strongly believe that the results presented here provide an insight toward and an opportunity to approach some other more complicated models as well. In this section we briefly address some of these extensions and generalizations.

\begin{itemize}
\item {\bf The Off-Diagonal Model.} In the present paper we consider the Fibonacci Hamiltonian in the form \eqref{e.fib}, which is usually called the {\it diagonal model} and which is the one most popular in the mathematics literature. In the physics literature the so-called off-diagonal model is usually considered. The spectral properties of the off-diagonal operator as well as the relation to the dynamics of the Fibonacci trace map are not any different from the diagonal one; see the appendix in \cite{DG11} for a detailed discussion of the off-diagonal model. All the results presented in this paper for the diagonal model also hold for the off-diagonal one.

\item {\bf Potentials Generated by Primitive Invertible Substitutions.} Discrete Schr\"odinger operators with potentials generated by primitive invertible substitutions have spectral properties that are very much similar to the spectral properties of the Fibonacci Hamiltonian. For some of the spectral properties this was justified in \cite{M14}. We expect that all the qualitative statements (i.e., all the statements mentioned in the introduction except Corollary~\ref{c.alphalowerbounds} and Theorem~\ref{t.asymptotics}) of this paper can be generalized to this case also. As for the large coupling asymptotics, the calculations can be more complicated, but can likely be carried out for particular potentials (given the results obtained in \cite{LPW07, LQW14, LW04, M10}).

\item {\bf Sturmian Potentials.} Sturmian potentials are natural generalizations of the Fibonacci potential. Namely, one simply replaces the specific value of $\alpha$ in \eqref{e.fib} by a general irrational $\alpha \in (0,1)$. It is known that the spectrum of a discrete Schr\"odinger operator with a Sturmian potential is a Cantor set of zero measure \cite{BIST89}, but in most cases this Cantor set will not be dynamically defined \cite{LPW07, LQW14}. Nevertheless, there is a dynamical presentation of the spectrum in this case as well \cite{BIST89, LW04, Ra}, and it would be interesting to see whether the dynamical approach can add something to the recent results in \cite{LPW07, LQW14, LW04, M10} that were obtained via the periodic approximation technique.

\item {\bf Jacobi Matrices.} In general, discrete Schr\"odinger operators form a particular case of the operators given by Jacobi matrices. In the case where the coefficients of a Jacobi matrix are modulated by the Fibonacci sequence, their spectral properties were studied in \cite{Yessen2011a}. Interestingly enough, the spectrum in this case does not have to be dynamically defined. Nevertheless, the relation to the dynamics of the Fibonacci trace map allows one to give a detailed description of the spectrum in this case, at least in some regimes, and our results can be used to provide a complete description throughout the entire parameter space.

\item {\bf CMV Matrices.} CMV matrices are the unitary analog of Jacobi matrices. That is, they are canonical models of unitary operators (just as Jacobi matrices are canonical models of self-adjoint operators) and they arise naturally in the study of orthogonal polynomials on the unit circle (while Jacobi matrices arise in the study of orthogonal polynomials on the real line); compare \cite{Simon1, Simon2}. In addition, CMV matrices have been effectively used to study quantum walks and the Ising model in one dimension; see \cite{CGMV, DMY13b}. Choosing the coefficients defining a CMV matrix according to the Fibonacci sequence one obtains an interesting model that can be studied using the trace map formalism as well. Several results for this model were obtained in \cite{DMY13a, DMY13b}, both from the perspective of orthogonal polynomials and the perspective of quantum walks and the Ising model. The results and tools developed in the present paper will allow one to take the analysis of the CMV case further.

\item {\bf Continuum Models.} Continuum Schr\"odinger operators with Fibonacci-type potentials were considered in \cite{BJK, DFG, KS, LSS}. In this case there are many models (depending on the choice of single-site potentials). The trace map description of the spectrum is also available in this case (see \cite{DFG}), and hence it is reasonable to expect that our results can be used.

\item {\bf Higher-Dimensional Separable Models.} Understanding the spectral properties of the operators associated with the standard two- and three-dimensional quasicrystal models is a major problem in the field which is currently out of reach. One of the ways to get some insight into the problem is to consider operators with separable potentials; for example, the Square (and Cubic) Fibonacci Hamiltonian and the labyrinth model \cite{EL06, EL07, EL08, Si89, SMS}. In these models the spectrum of the higher dimensional operator is the sum (or product) of the spectra of the one dimensional ones. Since studying the sum of dynamically defined Cantor sets is a classical problem which has been extensively studied (see, for example, \cite{HS, MY} and references therein), we expect that the current results will be instrumental in understanding the spectral properties of separable models. There is recent work on these models that relies on the one-dimensional results in the small and large coupling regimes \cite{DG11, DGS}, and the results of this paper will pave the way for a study of separable models that does not rely on the small and large coupling theory.

\end{itemize}

\section*{Acknowledgments}

We are grateful to Katrin Gelfert who explained to us how to use the thermodynamical formalism to prove Proposition~\ref{p.main1}, and to Eric Bedford, Tanya Firsova, Yulij Ilyashenko, and Mikhail Lyubich for useful discussions on polynomial dynamics.

\end{document}